\newtheorem{theorem}{Theorem}[section]
\newtheorem*{theorem*}{Theorem}
\newtheorem{proposition}[theorem]{Proposition}
\newtheorem{lemma}[theorem]{Lemma}
\newtheorem{corollary}[theorem]{Corollary}
\theoremstyle{definition}
\theoremstyle{remark}
\newtheorem{remark}[theorem]{Remark}
\newtheorem*{remark*}{Remark}
\newcommand{\con}[1]{\mathbb{#1}}
\newcommand{\R}{\con{R}} 
\newcommand{\Z}{\con{Z}} 
\newcommand{\Sph}{\con{S}} 
\newcommand{\acal}{\mathcal{A}}
\DeclareMathAlphabet{\mathpzc}{OT1}{pzc}{m}{it}
\DeclareMathAlphabet\euscr{T1}{qzc}{m}{n}
\newcommand{\B}{\mathchoice
	{\mathlarger{\euscr{B}}}
	{\ds \mathlarger{\euscr{B}}}
	{\scalebox{0.85}{\ensuremath {\euscr{B}}}}
	{\euscr{B}}
}
\newcommand{\leqnomode}{\tagsleft@true\let\veqno\@@leqno}
\newcommand{\reqnomode}{\tagsleft@false\let\veqno\@@eqno}
\newcommand{\abs}[1]{\left| #1 \right|}
\newcommand{\norm}[1]{\left \| {#1} \right \| }
\newcommand{\seminorm}[1]{\left [ {#1} \right ] }
\newcommand{\ep}{\varepsilon}
\newcommand{\s}{s}
\newcommand{\fraclaplacian}{(-\Delta)^\s}
\newcommand{\halflaplacian}{(-\Delta)^{1/2}}
\newcommand{\Lip}{\mathrm{Lip}}
\newcommand{\loc}{\mathrm{loc}}
\renewcommand{\d}{\,\mathrm{d}} 
\newcommand{\df}{\mathrm{d} } 
\newcommand{\dx}{\,\mathrm{d}x} 
\newcommand{\bpar}[1]{\left ( {#1}\right )}
\newcommand{\setcond}[2]{\left \{ #1 \ : \ #2  \right \}}
\newcommand\beqc[1]{\left\{\begin{array}{#1}}
	\newcommand\eeqc{\end{array} \right.}
\def\PDEsystem{rcll}
\def\bmatrix{\begin{pmatrix}}
	\def\ematrix{\end{pmatrix}}
\DeclareMathOperator{\dist}{dist}
\let\div\relax
\DeclareMathOperator{\div}{div}
\def\ds{\displaystyle}
\numberwithin{equation}{section}
\title[Regularity of stable solutions up to dimension 4]{A universal Hölder estimate up to dimension 4 for stable solutions to half-Laplacian semilinear equations}
\author{Xavier Cabr\'e}
\address{X. Cabr\'e \textsuperscript{1,2}
	\newline
	\textsuperscript{1} ICREA, Pg. Lluis Companys 23, 08010 Barcelona, Spain
	\newline
	\textsuperscript{2} Universitat Polit\`ecnica de Catalunya, Departament de Matem\`{a}tiques and IMTech, 
	Av. Diagonal 647, 08028 Barcelona, Spain}
\email{xavier.cabre@upc.edu}
\author{Tomás Sanz-Perela}
\address{T. Sanz-Perela:
	Departamento de Matemáticas, Universidad Autónoma de Madrid, Ciudad Universitaria de Cantoblanco, 28049 Madrid, Spain}
\email{tomas.sanz@uam.es}
\thanks{Both authors are supported by grants MTM2017-84214-C2-1-P and RED2018-102650-T funded by MCIN/AEI/10.13039/501100011033 and by ``ERDF A way of making Europe''. They are members of the Barcelona Graduate School of Mathematics (BGSMath) and of the Catalan research group 2017 SGR 01392. The second author acknowledges financial support from the Spanish Ministry of Economy and Competitiveness (MINECO), through the María de Maeztu Program for Units of Excellence in R\&D MDM-2014-0445, as well as from the EPSRC grant EP/S03157X/1.}
\keywords{Half-Laplacian, stable solutions, extremal solution, interior estimates, Dirichlet problem}
\begin{document}
	
\setstretch{1.0740}

\begin{abstract}
	
	We study stable solutions to the equation $(-\Delta)^{1/2} u = f(u)$, posed in a bounded domain of $\mathbb{R}^n$.
	For nonnegative convex nonlinearities, we prove that stable solutions are smooth in dimensions $n\leq 4$. 
	This result, which was known only for $n=1$, follows from a new interior H\"older estimate that is completely independent of the nonlinearity~$f$.
	
	A main ingredient in our proof is a new geometric form of the stability condition. 
	It is still unknown for other fractions of the Laplacian and, surprisingly, it requires convexity of the nonlinearity. 
	From it, we deduce higher order Sobolev estimates that allow us to extend the techniques developed by Cabr\'e, Figalli, Ros-Oton, and Serra for the Laplacian.
	In this way we obtain, besides the Hölder bound for $n\leq 4$, a universal $H^{1/2}$ estimate in all dimensions.
	
	Our $L^\infty$ bound is expected to hold for $n\leq 8$, but this has been settled only in the radial case or when $f(u) = \lambda e^u$. 
	For other fractions of the Laplacian, the expected optimal dimension for boundedness of stable solutions has been reached only when $f(u) = \lambda e^u$, even in the radial case.

\end{abstract}

\hspace{-.3cm}

\maketitle

\setcounter{tocdepth}{1}

\tableofcontents

\newpage

\section{Introduction and results}

The regularity of stable solutions to semilinear  equations $-\Delta u = f(u)$ has been a long-standing problem in elliptic PDEs since the 1970s.
Important efforts have been devoted to investigate the optimal dimension up to which stable solutions are bounded.
This problem has been recently solved by Cabré, Figalli, Ros-Oton, and Serra~\cite{CabreFigalliRosSerra-Dim9}, by proving that stable solutions  are regular in dimensions $n\leq 9$  for all nonnegative nonlinearities $f$.
The result is optimal since there exist examples of singular $H^1$ stable solutions in dimensions $n \geq 10$. 
For further details, see~\cite{CabreFigalliRosSerra-Dim9, Dupaigne} and the references therein.

The goal of this paper is to study the same question  in the fractional setting.
We consider the equation 
\begin{equation}
	\label{Eq:SemilinearEquation}
	\fraclaplacian u = f(u) \quad \text{ in } \Omega,
\end{equation}
where $\Omega$ is a bounded domain of $\R^n$ and $\fraclaplacian$ is the fractional Laplacian,
\begin{equation}
	\label{Eq:DefFracLap}
	\fraclaplacian w\,(x') := c_{n,\s} \int_{\R^{n}} \dfrac{w(x') - w(z')}{|x'-z'|^{n + 2\s}}  \d z' ,\quad \s \in (0,1),
\end{equation}
with  $ c_{n,\s}$ being a positive normalizing constant (see \cite{CabreSireI}).
In this case, the few known results (mainly those contained in the three papers~\cite{RosOtonSerra-Extremal, RosOton-Gelfand, SanzPerela-Radial} described below) reach the expected optimal dimension for boundedness of stable solutions only when $f(u) = \lambda e^u$, even in the radial case.
In \Cref{Fig:KnownResults} such dimension, which was found by Ros-Oton in \cite{RosOton-Gelfand} and it is given by condition \eqref{Eq:OptimalDimensionsGammas} below, is compared with the available~results.

Note that \eqref{Eq:SemilinearEquation} is the Euler-Lagrange equation of the functional
\begin{equation}
	\label{Eq:Energy}
	E(w) := \dfrac{c_{n,\s}}{4}\int \int_{\R^{2n} \setminus (\Omega^c)^2} \dfrac{|w(x') - w(z')|^2}{|x'-z'|^{n + 2\s}} \d x' \d z' - \int_\Omega F(w) \d x',
\end{equation}
where $F(t) := \int_0^t f(\theta) \d \theta$ and $\Omega^c := \R^n\setminus \Omega$. 
A solution $u:\R^n \to \R$ to \eqref{Eq:SemilinearEquation} ---or critical point of $E$--- is said to be \emph{stable} if the second variation of $E$ at $u$ is nonnegative, i.e., $\frac{\d^2}{\d \varepsilon^2}|_{\varepsilon = 0} E(u + \varepsilon \xi) \geq 0$ for all $\xi\in H^{\s}(\R^n)$ with compact support in $\Omega$.
This is equivalent to requiring that
\begin{equation} 
	\label{Eq:Stability}
	\int_{\Omega} f'(u)\xi^2 \d x'
	\leq
	\seminorm{\xi}_{H^{\s}(\R^n)}^2 \quad \text{ for all } \ \xi\in H^{\s}(\R^n) \text{ with compact support in } \Omega. 
\end{equation}
Recall that for $\s \in (0,1)$ and $U\subset \R^n$, we define $H^\s (U) := \{ w \in L^2(U): \seminorm{w}_{H^{\s}(U)} < +\infty\}$, where 
\begin{equation}
	\label{Eq:DefHs}
	\seminorm{w}_{H^{\s}(U)}^2 := \dfrac{c_{n,\s}}{2}\int_U \int_U \dfrac{|w(x') - w(z')|^2}{|x'-z'|^{n + 2\s}} \d x' \d z'.
\end{equation}
Notice that stability is considered among functions which agree with $u$ outside $\Omega$, and therefore \emph{local minimizers} of the energy (i.e., minimizers under small perturbations which do not change the exterior values of $u$) are stable solutions.

Our interest lies in nonnegative nonlinearities that grow superlinearly at $+\infty$. 
In this case, it is easy to see that the energy \eqref{Eq:Energy} is unbounded below and hence admits no absolute minimizer.
However, as we will see in \Cref{Subsec:Extremal}, there are important instances in which nonconstant stable solutions exist.

In this article we study the problem for the half-Laplacian ($\s=1/2$), a case which is of special interest in view of its applications.
Indeed, $\halflaplacian$ and related first order integro-differential operators appear in the modeling of important physical phenomena, such as the Peierls-Nabarro model for crystal dislocations or the Benjamin-Ono equation in fluid dynamics.
This occurs since the half-Laplacian is the Dirichlet to Neumann map associated to the harmonic extension in the half-space.

An important illustration of  the key role played by the dimension in our problem is given by the function $u(x') = \log |x'|^{-1}$ ($x'\in \R^n$), which solves  $\halflaplacian u = \lambda_0 e^u$ in $B_1$ for some constant $\lambda_0 >0$ and belongs to $H^{1/2}(B_1)\cap L^1_{1/2}(\R^n)$ ---see \eqref{Eq:DefL1s} below for this last space. 
Using the fractional Hardy inequality  one can check that, in dimensions $n \geq 9$, $u$ is a stable solution.

On the other hand, for the equation $\halflaplacian u = f(u)$ in $\Omega \subset \R^n$, with $f \geq 0$ and under the Dirichlet condition $u \equiv 0$ in $\R^n \setminus \Omega$, it is known that stable solutions are bounded in $\Omega$
\begin{itemize}
	\item when $n= 1$, if $f$ is convex (Ros-Oton and Serra \cite{RosOtonSerra-Extremal});
	\item when $n \leq 4$, if $f f'' /(f')^2$ has a limit at infinity (Ros-Oton and Serra \cite{RosOtonSerra-Extremal});
	\item when $n \leq 8$, if $f(u) = \lambda e^u$ and $\Omega$ is symmetric and convex with respect to all the coordinate directions (Ros-Oton \cite{RosOton-Gelfand});
	\item when $2 \leq n \leq 8$, if $\Omega= B_1$ (Sanz-Perela \cite{SanzPerela-Radial}).
\end{itemize}

In view of these results, it is natural to conjecture that stable solutions to $\halflaplacian u = f(u)$ in $\Omega\subset \R^n$ are always bounded in $\Omega$ whenever $n\leq 8$, not only for $f(u) = \lambda e^u$ in symmetric domains but for a wider class of nonlinearities and domains.
Here the dimension $n=8$ would be optimal, by the explicit stable solution for $n\geq 9$ exhibited above.
In this paper we make progress towards the solution of this conjecture by extending some of the techniques developed for the local case in \cite{CabreFigalliRosSerra-Dim9} by Cabré, Figalli, Ros-Oton, and Serra.

The following is our main result.
It provides a universal interior Hölder estimate for stable solutions $u$ to \eqref{Eq:SemilinearEquation} in dimensions $n\leq 4$, as well as an $H^{1/2}$ bound in every dimension.
Both estimates give a control in terms of a very weak norm of $u$, namely the quantity $\norm{u}_{L^1_{1/2} (\R^n)}$.
Here and through the paper, for $\s\in (0,1)$ we denote by $L^1_{\s} (\R^n)$ the space of measurable functions for which the norm
\begin{equation}
	\label{Eq:DefL1s}
	\norm{w}_{L^1_{\s} (\R^n)} := \int_{\R^n} \dfrac{ |w(x')|}{(1 + |x'|^2)^{\frac{n + 2\s}{2}}} \d x'
\end{equation}
is finite.

\begin{theorem}
	\label{Th:Holder}
	Let $n\geq 1$ and $u\in C^2(B_1)\cap L^1_{1/2}(\R^n)$ be a stable solution to $(-\Delta)^{1/2}u = f(u)$ in $B_1 \subset \R^n$, where $f$ is a nonnegative convex $C^{1,\gamma}$ function for some $\gamma >0$. 
	
	Then,	
	\begin{equation}
		\label{Eq:H1/2}
		\seminorm{u}_{H^{1/2} (B_{1/2})} \leq C   \norm{u}_{L^1_{1/2} (\R^n)} 
	\end{equation}
	for some dimensional constant $C$.
	In addition,  
	\begin{equation}
		\label{Eq:Holder}
		\norm{u}_{C^\alpha (\overline{B}_{1/2})} \leq C   \norm{u}_{L^1_{1/2} (\R^n)}  \qquad \text{if } 1 \leq n\leq 4,
	\end{equation}
	for some dimensional constants $\alpha>0$ and $C$.
\end{theorem}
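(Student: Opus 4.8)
The plan is to pass to the Caffarelli--Silvestre extension and to transplant to the half-space the strategy devised for the Laplacian by Cabré, Figalli, Ros-Oton and Serra (and, for the dimension-$4$ threshold, Cabré's earlier regularity result). Denote by $\bar u=\bar u(x',y)$ the harmonic extension of $u$ to $\R^{n+1}_+:=\{(x',y):x'\in\R^n,\ y>0\}$, which is well defined since $u\in L^1_{1/2}(\R^n)$; then $\Delta\bar u=0$ in $\R^{n+1}_+$, $\bar u(\cdot,0)=u$, and $-\partial_y\bar u(\cdot,0)=\halflaplacian u=f(u)$ in $B_1$. Writing $B_r^+:=B_r\cap\R^{n+1}_+$, the stability condition \eqref{Eq:Stability} is equivalent to
\[
\int_{B_1}f'(u)\,\xi^2\d x'\ \le\ \int_{\R^{n+1}_+}|\nabla\psi|^2\d x'\d y
\]
for every $\psi\in H^1(\R^{n+1}_+)$ that is compactly supported in $B_1^+$ and has trace $\xi$ on $\{y=0\}$, the Dirichlet energy being minimized by the harmonic extension of $\xi$. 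Since $\bar u$ is harmonic, the Poisson representation bounds $\bar u$ and all its derivatives on $\{y\ge\delta\}$ in terms of $\norm{u}_{L^1_{1/2}(\R^n)}$; and since $f\ge0$ gives $\halflaplacian u\ge0$, comparing $u$ with a supersolution of $\halflaplacian$ in $B_{3/4}$ yields $\int_{B_{3/4}}f(u)\d x'\le C\norm{u}_{L^1_{1/2}(\R^n)}$. Together these reduce \eqref{Eq:H1/2} and \eqref{Eq:Holder} to interior estimates near $B_{1/2}\times\{0\}$ with right-hand side controlled by $\norm{u}_{L^1_{1/2}(\R^n)}$ (the remaining lower-order terms, such as $\int_{B_{3/4}}f(u)|u|$, being absorbed once the first Sobolev gain below is available).

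The heart of the matter is a geometric reformulation of stability. Differentiating \eqref{Eq:SemilinearEquation} along a tangential direction $e\in\Sph^{n-1}$ shows that $v_e:=\partial_e\bar u$ is harmonic in $\R^{n+1}_+$ and solves the linearized Neumann condition $-\partial_y v_e=f'(u)\,v_e$ on $\{y=0\}\cap B_1$. Hence $\varphi:=|\nabla_{x'}\bar u|=\big(\sum_{i=1}^n(\partial_{x_i}\bar u)^2\big)^{1/2}$ is subharmonic, with the pointwise deficit $\varphi\,\Delta\varphi=\sum_i|\nabla\partial_{x_i}\bar u|^2-|\nabla\varphi|^2\ge0$, and $-\partial_y\varphi=f'(u)\varphi$ on $\{y=0\}$ wherever $\varphi>0$. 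Testing stability with $\xi=\varphi(\cdot,0)\eta$ and the competitor $\psi=\varphi\eta$, and integrating the identity for $\varphi\,\Delta\varphi$ against $\varphi\eta^2$ over $\R^{n+1}_+$ --- so that the boundary term $\int_{\{y=0\}}f'(u)\varphi^2\eta^2$ generated by the Neumann condition is precisely the right-hand side of stability and all first-order cross terms cancel --- one is left with
\[
\int_{\R^{n+1}_+}\Big(\sum_{i=1}^n|\nabla\partial_{x_i}\bar u|^2-|\nabla\varphi|^2\Big)\eta^2\d x'\d y\ \le\ \int_{\R^{n+1}_+}\varphi^2\,|\nabla\eta|^2\d x'\d y
\]
for every Lipschitz $\eta$ compactly supported in $B_1^+$ (one first regularizes $\varphi$ by $\sqrt{|\nabla_{x'}\bar u|^2+\varepsilon^2}$). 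By a Cauchy--Schwarz/Sternberg--Zumbrun computation the left-hand integrand dominates a nonnegative geometric quantity built from the second fundamental form and the tangential gradient of the level sets of $\bar u$. To feed this into the nonlocal machinery one needs the analogous information read off on the level sets of $u$ inside $\{y=0\}$, and here the convexity of $f$ is essential: the fractional Jensen (Kato) inequality gives $\halflaplacian f(u)\le f'(u)f(u)$ in $B_1$, so that $f(u)$ is itself a subsolution of the linearized problem; testing stability with neither $\varphi$ nor $f(u)$ alone (which only returns a trivial identity) but with a suitable combination of the two subsolutions produces a genuine geometric stability inequality for $u$ on $\R^n$.

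From this geometric inequality one extracts higher-order Sobolev estimates. Inserting the curvature control into a Michael--Simon--Sobolev inequality on the level sets and summing over tangential directions upgrades the bare $L^2$ information on $\nabla\bar u$ to an integrability gain, yielding $u\in H^1(B_{3/4})$ together with a reverse-Hölder improvement $\norm{\nabla u}_{L^{2+\gamma_0}(B_{3/4})}\le C\norm{u}_{L^1_{1/2}(\R^n)}$ for a dimensional $\gamma_0>0$, and a Morrey decay $\int_{B_r^+}|\nabla\bar u|^2\le C\,r^{\,n-1+2\alpha}\norm{u}^2_{L^1_{1/2}(\R^n)}$ on small half-balls. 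The $H^1$ bound immediately gives \eqref{Eq:H1/2} in every dimension, as it is stronger than the claimed $H^{1/2}$ estimate and no dimensional restriction enters its proof. The Morrey decay, via Campanato's characterization of $C^\alpha$ for the extension, gives the interior estimate \eqref{Eq:Holder}; the restriction $1\le n\le4$ enters only in closing the iteration that produces the decay exponent, where the gain $\gamma_0$ must beat a dimension-dependent threshold coming from the exponents in the Sobolev/Michael--Simon inequality --- which, with the half-Laplacian scaling, happens exactly up to $n=4$, mirroring the dimension-$4$ bound in the local setting.

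The main obstacle is the geometric stability inequality on $\R^n$ and, in particular, the use of convexity: the linearized problem is nonlocal, so the integrations by parts and the cancellation of the boundary term against the right-hand side of stability must be carried out on the extension while keeping track of the far-field of $u$ through $\norm{u}_{L^1_{1/2}(\R^n)}$, and converting the extension-side inequality into usable information on the level sets of $u$ has no analogue for the Laplacian (and, as the authors remark, is open for $\s\ne1/2$). A secondary difficulty is quantitative: isolating a dimensional gain $\gamma_0$ and checking that the Morrey iteration closes precisely up to $n=4$ requires tracking all dimension-dependent constants.
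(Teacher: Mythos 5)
Your overall plan has the right skeleton (pass to the harmonic extension, prove a Sternberg--Zumbrun type geometric stability inequality, extract Sobolev and Morrey control), but the geometric inequality you actually write down is the wrong one, and this gap breaks the downstream argument.

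You take $\varphi := |\nabla_{x'}\bar u|$, the \emph{horizontal} gradient of the extension, test stability with $\varphi\eta$, and arrive at the deficit $\sum_{i=1}^n|\nabla\partial_{x_i}\bar u|^2 - |\nabla\varphi|^2$. This is precisely the Sire--Valdinoci inequality that the paper explicitly flags as insufficient. To see why, take an orthonormal frame $\{\tau_1,\dots,\tau_{n-1},\nu',e_{n+1}\}$ of $\R^{n+1}$ with $\nu'=\nabla_{x'}\bar u/\varphi$ horizontal and $e_{n+1}$ vertical: your deficit equals $\sum_{k=1}^{n-1}\bigl|D^2\bar u[\tau_k,\cdot]\bigr|^2$, so it controls every Hessian entry involving some $\tau_k$ but leaves $D^2\bar u[\nu',\nu']$, $D^2\bar u[\nu',e_{n+1}]$ and $D^2\bar u[e_{n+1},e_{n+1}]$ unbounded, and harmonicity cancels only one of the three. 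The paper's Theorem~1.3 instead tests with the \emph{full} gradient $|\nabla v|$ (including $v_y$); its deficit $\acal^2$ then leaves only $D^2 v[\nu,\nu]$ uncontrolled, and that single entry is recovered by harmonicity. This is exactly what yields the $L^2$ bound on the full Hessian (Corollary~3.2), which in turn powers the compactness/doubling lemma (Lemma~4.1) and the $H^1$ interpolation (Proposition~5.1). Without the full-gradient version none of this is available. You are right that convexity must enter and that $f(u)$ is a subsolution of the linearized problem, but ``a suitable combination of the two subsolutions'' is not a construction: the actual mechanism is that the trace of $|\nabla v|$ on $\{y=0\}$ is $\sqrt{|\nabla_{x'}u|^2+f(u)^2}$, and convexity (via $\halflaplacian f(u)\le f'(u)f(u)$) together with $f\geq 0$ makes the boundary term $|\nabla v|\,\partial_y|\nabla v|+f'(u)|\nabla v|^2$ nonnegative. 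One must actually take $\xi=|\nabla v|\eta$, with $|\nabla v|$ regularized by $\sqrt{|\nabla v|^2+\varepsilon^2}$, and carry this out.

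A second, independent gap is that your proposal never identifies the source of the cutoff $n\le4$, which in the paper is the radial test function $\xi=|x|^{-(n-1)/2}(x\cdot\nabla v)\zeta$ (Lemma~2.3 and Proposition~2.2). With this choice $f$ drops out of the stability inequality, and an explicit computation gives the weighted radial-derivative bound precisely for $n\in(2\s,10\s)$, i.e.\ $2\le n\le4$ at $\s=1/2$; the case $n=1$ is then reached by adding an artificial variable (Section~7). You instead invoke a Michael--Simon--Sobolev gain, a reverse-Hölder exponent, and Campanato decay and assert the iteration closes ``exactly up to $n=4$'' without any exponent bookkeeping. That geometric-Sobolev route (in the spirit of Cabré's earlier local $n\le4$ proof) might be adaptable, but it is not the paper's argument, it would still need the full-Hessian bound you lack, and as written it is unsubstantiated.
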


The most remarkable feature of this result is that estimates \eqref{Eq:H1/2} and  \eqref{Eq:Holder} do not depend on the nonlinearity $f$ at all, which is only assumed to be nonnegative and convex.
This is a notorious difference with all the available results mentioned above, in which the $L^\infty$ estimates depended on the particular  nonlinearity $f$ appearing in the equation. This feature will allow us to establish a Liouville result for entire stable solutions, \Cref{Coro:Liouville} below.

In addition, \Cref{Th:Holder} is the first result in which no prescribed exterior Dirichlet condition is assumed. 
This will be of great importance in order to include the case $n=1$ (by looking at the solution as defined in $\R^2$ after adding an artificial variable) since our basic inequality towards the Hölder estimate \eqref{Eq:Holder} ---which is \eqref{Eq:EstimatevrAnnulusHalfLap} below--- requires\footnote{More generally, for $\s \in (0,1]$ it requires $n > 2\s$; see \Cref{Prop:EstimatevrAnnulus} below for $\s\in (0,1)$ and \cite{CabreFigalliRosSerra-Dim9} for $\s = 1$.} $n\geq 2$.

The Hölder interior estimate of \Cref{Th:Holder} for $n\leq 4$ can be combined with the moving planes method to obtain an $L^\infty(\Omega)$ bound for stable solutions to the Dirichlet problem in convex domains $\Omega$ under zero exterior data.
Indeed, as proved in~\cite{RosOtonSerra-Extremal}, one can start the moving planes argument at points on the boundary of a convex domain $\Omega$, obtaining an $L^\infty$ estimate in a neighborhood of $\partial \Omega$.
Thus, one concludes the following result.

\begin{corollary}
	\label{Coro:LinftyConvexDiriclet}
	Let $1 \leq n \leq 4$ and let $\Omega \subset \R^n$ be any bounded convex $C^1$ domain.	
	Let $u\in L^\infty(\Omega)\cap H^{1/2}(\R^n)$ be a stable solution to
	\begin{equation}
		\label{Eq:DirichletPb}
		\beqc{\PDEsystem}
		\halflaplacian u &= & f(u) &  \text{in } \Omega ,\\
		u &= &0 &  \text{in }    \R^n\setminus\Omega,
		\eeqc
	\end{equation}
	where $f$ is a nonnegative convex $C^{1,\gamma}$ function for some $\gamma >0$.
	
	Then, 
	\begin{equation}
		\label{Eq:LinftyConvex}
		\norm{u}_{L^\infty(\Omega)} \leq C_\Omega  \norm{u}_{L^1(\Omega)} 
	\end{equation}
	for some constant $C_\Omega$ depending only on $\Omega$.
\end{corollary}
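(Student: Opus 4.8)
The plan is to obtain \eqref{Eq:LinftyConvex} by patching an interior bound, coming directly from \Cref{Th:Holder}, with a boundary bound, coming from the moving planes method; this is the fractional analogue of the argument used for the Laplacian in \cite{CabreFigalliRosSerra-Dim9}. Two elementary reductions come first. Since $f\geq 0$, the function $u$ satisfies $\halflaplacian u=f(u)\geq 0$ in $\Omega$ and $u\equiv 0$ in $\R^n\setminus\Omega$, so the maximum principle gives $u\geq 0$ in $\R^n$ and it is enough to bound $u$ from above. Moreover, since every rescaling of $u$ is bounded and compactly supported and the weight in \eqref{Eq:DefL1s} is at most $1$, one has the crude inequality $\norm{w}_{L^1_{1/2}(\R^n)}\leq \norm{w}_{L^1(\R^n)}$, which will suffice to convert the $L^1_{1/2}$-control of \Cref{Th:Holder} into an $L^1(\Omega)$-control.

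For the interior, fix $x_0\in\Omega$ with $\dist(x_0,\partial\Omega)\geq\delta$ for a $\delta>0$ to be chosen, set $\rho:=\delta/2$ so that $\overline{B_\rho(x_0)}\subset\Omega$, and consider $v(y'):=u(x_0+\rho y')$. Interior fractional Schauder estimates (using $u\in L^\infty$ and $f\in C^{1,\gamma}$) upgrade $u$ to $C^2_\loc(\Omega)$, hence $v\in C^2(B_1)$; also $v\in L^1_{1/2}(\R^n)\cap H^{1/2}(\R^n)$. From \eqref{Eq:DefFracLap} one checks that $\halflaplacian v=\rho\, f(v)=:\tilde f(v)$ in $B_1$, with $\tilde f$ still nonnegative, convex and $C^{1,\gamma}$; and, crucially, because $2\s=1$ the half-Laplacian and the Gagliardo seminorm \eqref{Eq:DefHs} obey the same scaling, so \eqref{Eq:Stability} is preserved and $v$ is a stable solution in $B_1$. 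Applying \Cref{Th:Holder} ---this is where the restriction $n\leq 4$ enters--- together with the comparison above yields
\[
0\leq u(x_0)=v(0)\leq\norm{v}_{L^\infty(B_{1/2})}\leq C\norm{v}_{L^1_{1/2}(\R^n)}\leq C\norm{v}_{L^1(\R^n)}=C\rho^{-n}\norm{u}_{L^1(\Omega)},
\]
with $C$ dimensional. For the boundary layer $\Omega_\delta:=\{x\in\Omega:\dist(x,\partial\Omega)<\delta\}$ I would invoke \cite{RosOtonSerra-Extremal}: since $\Omega$ is convex and $C^1$ and $f\geq 0$, the moving planes method can be started at every boundary point and, for a suitable $\delta=\delta(\Omega)>0$, it bounds $u$ on $\Omega_\delta$ by $C_\Omega\norm{u}_{L^1(\Omega)}$ (equivalently, by $\sup_{\Omega\setminus\Omega_\delta}u$, which is then controlled by the interior estimate just proved). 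Fixing this $\delta$ and combining the two estimates gives \eqref{Eq:LinftyConvex} with a constant depending only on $n$ and $\Omega$.

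The only non-elementary ingredient, and hence the main obstacle, is the boundary estimate via moving planes for $\halflaplacian$ in convex $C^1$ domains; this, however, is not new, being exactly the mechanism of \cite{RosOtonSerra-Extremal}. The remaining point worth stressing is that the covering/rescaling step above works \emph{because} the constant in \eqref{Eq:Holder} is independent of $f$: it does not deteriorate as the nonlinearity $\tilde f=\rho f$ of the rescaled problem changes from point to point.
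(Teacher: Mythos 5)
Your proposal is correct and follows the same strategy as the paper's proof: the boundary layer $\Omega\setminus K_\delta$ is handled by the moving planes estimate of Proposition~1.8 in \cite{RosOtonSerra-Extremal}, and the compact interior $K_\delta$ is handled by the interior Hölder estimate of \Cref{Th:Holder} after rescaling, using that the constant there is independent of $f$. The only difference is that you spell out the covering and scaling step (including the scale invariance of the stability condition for $\s=1/2$ and the inequality $\norm{\cdot}_{L^1_{1/2}(\R^n)}\leq\norm{\cdot}_{L^1(\R^n)}$), which the paper compresses into "a covering and scaling argument".
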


Also in convex domains $\Omega$, an $H^\s(\R^n)$ estimate for bounded stable solutions to \eqref{Eq:SemilinearEquation} vanishing outside $\Omega$ is known to hold in every dimension and for all $\s\in(0,1)$.
This was proved by Ros-Oton and Serra in \cite{RosOtonSerra-Extremal} using the Pohozaev identity for the fractional Laplacian combined with some regularity estimates near $\partial \Omega$.
Instead, our interior $H^{1/2}$ bound \eqref{Eq:H1/2} does not assume any particular exterior data ---and also holds for all dimensions.

Although \Cref{Th:Holder} and \Cref{Coro:LinftyConvexDiriclet} are stated as a priori estimates for $C^2$ or $L^\infty$ solutions, they also hold for a bigger class of stable solutions.
This is discussed in \Cref{Remark:Regularity} below, where we comment also on the different notions of solution to our semilinear equation.

As a main ingredient in the proof of \Cref{Th:Holder}, we establish a new geometric\footnote{\label{Footnote:Curvatures}Although we do not use it in this article, it is worth noticing that the quantity $\acal$ defined in \eqref{Eq:AcalDef} controls a geometric quantity: the second fundamental form of the level sets of $v$. Indeed, in the set $\{|\nabla v|>0\}$ it holds $\acal^2= |\nabla_T |\nabla v||^2 + |B|^2 |\nabla v |^2$, where $|B|^2 = |B(x)|^2$ denotes the square of the second fundamental form of the level set of $v$ passing through $x\in \R^{n+1}_+$, and $\nabla_T$ denotes the tangential gradient along such level set; see Lemma~2.1 in~\cite{SternbergZumbrun1} for a detailed proof.
} form of the stability condition, expressed through the harmonic extension\footnote{Recall that the equation $\halflaplacian u = f(u)$ is equivalent to $\partial_\nu v = f(v) $ on $\partial \R^{n+1}_+$.} $v$ of $u$ in $\R^{n+1}_+$. 
It is stated in  \eqref{Eq:GeomStabilityFullGradient_s=1/2} below, and it is a fractional analogue of a well-known inequality of Sternberg and Zumbrun \cite{SternbergZumbrun1,SternbergZumbrun2} for stable solutions to $-\Delta u = f(u)$.
Surprisingly (when comparing it with the proof in the local case),  to establish \eqref{Eq:GeomStabilityFullGradient_s=1/2} we need to further assume that $f$ is~convex.

\begin{theorem}
	\label{Th:GeomStabilityFullGradient_s=1/2}
	Let $n \geq 1$ and $u\in C^2(\Omega) \cap L^1_{1/2}(\R^n)$ be a stable solution to $(-\Delta)^{1/2}u = f(u)$ in a domain $\Omega \subset \R^n$, where $f$ is a nonnegative convex $C^{1,\gamma}$ function for some $\gamma>0$. 
	Let $v$ be the harmonic extension of $u$ in $\R^{n+1}_+$ and define
	\begin{equation}
		\label{Eq:AcalDef}
		\acal := \begin{cases}
			\left ( \ds \sum_{i,j=1}^{n+1}  v_{ij}^2 -  \sum_{j=1}^{n+1}\left( \sum_{i=1}^{n+1} v_{ij} \dfrac{v_i }{|\nabla v|}  \right)^2  \right)^{1/2} & \text{ if }\, |\nabla v| > 0, \vspace{2mm} \\
			\ 0 &  \text{ if }\, |\nabla v| = 0 .
		\end{cases} 
	\end{equation}
	
	Then,  
	\begin{equation}
		\label{Eq:GeomStabilityFullGradient_s=1/2}
		\int_{\R^{n+1}_+ } \acal^2 \eta ^2 \d x \leq \int_{\R^{n+1}_+} |\nabla v|^2  |\nabla \eta |^2  \d x
	\end{equation}
	for every Lipschitz function $\eta$ with compact support in $\Omega \times [0,+\infty)$.
\end{theorem}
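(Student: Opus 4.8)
\medskip

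The plan is to recast the stability condition through the Caffarelli--Silvestre extension, test it with the modulus of the full gradient of $v$, and reduce the statement to a pointwise inequality on $\partial\R^{n+1}_+$ that convexity settles. First I would use that $[\xi]_{H^{1/2}(\R^n)}^2$ equals the Dirichlet energy of the harmonic extension of $\xi$ in $\R^{n+1}_+$, which minimizes $\int_{\R^{n+1}_+}|\nabla\,\cdot\,|^2$ among all extensions; hence \eqref{Eq:Stability} is equivalent to
\[
\int_\Omega f'(u)\,\xi^2\d x' \;\le\; \int_{\R^{n+1}_+}|\nabla\bar\xi|^2\d x
\qquad\text{for every }\bar\xi\text{ with trace }\xi\text{ compactly supported in }\Omega.
\]
Recall also that the equation translates into the Neumann condition $\partial_\nu v=f(v)$ on $\partial\R^{n+1}_+$ (i.e.\ $v_{n+1}=-f(v)$ on $\{x_{n+1}=0\}$) and that, by elliptic regularity and the hypothesis $f\in C^{1,\gamma}$, the extension $v$ is of class $C^{2}$ up to $\Omega\times\{0\}$.

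The algebraic core is that $c:=|\nabla v|$ is subharmonic wherever it does not vanish: since each $v_k$ is harmonic, differentiating $c^2=\sum_k v_k^2$ twice and using $\Delta v_k=0$ and $c\,\nabla c=(D^2v)\nabla v$ yields $c\,\Delta c=|D^2v|^2-|\nabla c|^2=\acal^2\ge 0$ on $\{\nabla v\neq 0\}$ (recall \eqref{Eq:AcalDef}); the set $\{\nabla v=0\}$ is handled as usual by replacing $c$ with $\sqrt{c^2+\ep}$ and letting $\ep\downarrow 0$. I would then take $\bar\xi=c\,\eta$ in the reformulated stability inequality, expand $\int_{\R^{n+1}_+}|\nabla(c\eta)|^2=\int|\nabla c|^2\eta^2+2\int c\,\eta\,\nabla c\cdot\nabla\eta+\int c^2|\nabla\eta|^2$, and eliminate the first two bulk terms by multiplying the identity $c\,\Delta c=\acal^2$ by $\eta^2$ and integrating by parts over $\R^{n+1}_+$, which produces a single boundary term $\int_\Omega c\,\eta^2\,\partial_\nu c\,\d x'$. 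The outcome is
\[
\int_{\R^{n+1}_+}\acal^2\eta^2\d x \;\le\; \int_{\R^{n+1}_+}|\nabla v|^2|\nabla\eta|^2\d x \;+\; \int_\Omega\bigl(c\,\partial_\nu c-f'(u)\,c^2\bigr)\eta^2\d x',
\]
so everything reduces to proving that the boundary integrand is nonpositive. (Note that for $\eta$ supported away from $\{x_{n+1}=0\}$ there is no boundary term and the inequality is automatic from subharmonicity alone, so the content is entirely at the boundary.)

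To compute that integrand I would differentiate the Neumann condition in tangential directions, getting $v_{j,n+1}=-f'(u)\,v_j$ on $\{x_{n+1}=0\}$ for $j\le n$, and invoke harmonicity in the form $v_{n+1,n+1}=-\Delta_{x'}u$ on $\{x_{n+1}=0\}$. Combining this with $c\,\partial_\nu c=-\sum_k v_k v_{k,n+1}$, $v_{n+1}=-f(u)$, and $c^2=|\nabla_{x'}u|^2+f(u)^2$ on the boundary, a short computation gives
\[
c\,\partial_\nu c-f'(u)\,c^2 \;=\; -\,f(u)\,\bigl(\Delta_{x'}u+f(u)f'(u)\bigr) \qquad\text{on }\Omega\times\{0\}.
\]
Since $f\ge 0$, it therefore suffices to establish the pointwise inequality $-\Delta_{x'}u\le f(u)f'(u)$ in $\Omega$ (needed only where $f(u)>0$), which I expect to be the main obstacle. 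This is a fractional Kato-type inequality, and it is precisely where the convexity of $f$ enters: convexity makes $f(v)$ subharmonic in $\R^{n+1}_+$ (as $\Delta(f(v))=f''(v)|\nabla v|^2\ge 0$), equivalently it gives, through $f(u(x'))-f(u(z'))\le f'(u(x'))(u(x')-u(z'))$ and the positivity of the kernel $|x'-z'|^{-n-1}$, the bound $\halflaplacian(f(u))\le f'(u)\,\halflaplacian u=f(u)f'(u)$, which together with the identity $\halflaplacian\circ\halflaplacian=-\Delta$ and the equation yields the claim. The technical price — and the reason a genuinely new argument is needed, in contrast with the local Sternberg--Zumbrun inequality, where $\acal$ already emerges from the interior equation satisfied by $|\nabla u|$ — lies in making this Kato step rigorous for a solution defined only in $\Omega$: one must control $f(u)$, or equivalently $\halflaplacian u$, away from $\Omega$ and justify the second-order nonlocal computation at interior points.
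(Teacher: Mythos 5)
Your proposal is correct and takes essentially the same route as the paper: test stability with $|\nabla v|\,\eta$ (regularized as $\sqrt{|\nabla v|^2+\ep^2}$), exploit $c\,\Delta c = \acal^2$ and harmonicity of $v$, integrate by parts to isolate the boundary term $\int_\Omega(c\,\partial_\nu c - f'(u)c^2)\eta^2\,\d x'$, and show its integrand is nonpositive by reducing to $-\Delta_{x'}u \le f(u)f'(u)$, established via the factorization $-\Delta=(-\Delta_{x'})^{1/2}\circ(-\Delta_{x'})^{1/2}$ together with the convexity-driven Kato bound $(-\Delta_{x'})^{1/2}f(u)\le f'(u)(-\Delta_{x'})^{1/2}u$ and $f\ge 0$. (The aside that subharmonicity of $f(v)$ in the bulk is ``equivalent'' to this Kato inequality is a slight overstatement, but the kernel-plus-convexity argument you give is exactly the one in the paper.)
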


An analogue of \Cref{Th:GeomStabilityFullGradient_s=1/2} for fractional powers $\s \neq 1/2$ is not known at the moment. In its proof we strongly use that $\s=1/2$; see \Cref{Remark:s=1/2} for details.
We believe that, if available, it would allow extending \Cref{Th:Holder}, in certain dimensions, to all powers $\s \in (0,1)$.

Prior to our work, the inequality of Sternberg and Zumbrun had been extended to the fractional setting by Sire and Valdinoci~\cite{SireValdinoci}, but in a weaker form than ours.
Their inequality involves only horizontal derivatives of $v$: they obtained a similar quantity  to $\acal$ in \eqref{Eq:AcalDef}, but where the indices run only from $1$ to $n$, and not to $n+1$. 
As a consequence, no one has used it successfully for proving boundedness of stable solutions in bounded domains ---although it is the main ingredient in a simple proof of the fractional De Giorgi conjecture in $\R^2$; see~\cite{SireValdinoci}.

Instead, from our new geometric form of the stability condition we will obtain an $L^2$ estimate for all second derivatives of $v$, including those involving the $y$ variable.
This bound will be crucial in the proof of \Cref{Th:Holder}; see~\Cref{Subsec:IdeasOfProofs} for more details.

Before going into the main ideas in our proofs, let us introduce an important class of stable solutions and the known results on their regularity.

\subsection{Extremal solutions. Known results for $\mathbf{\s\in (0,1)}$}
\label{Subsec:Extremal}

Given a power $\s \in (0,1)$, consider the  problem
\begin{equation}
	\label{Eq:ExtremalProblem}
	\beqc{\PDEsystem}
	\fraclaplacian u &= &\lambda f(u) &  \text{in }  \Omega ,\\
	u &= &0 &  \text{in }    \R^n\setminus\Omega,
	\eeqc
\end{equation}
where $\Omega \subset \R^n$ is a bounded smooth domain, $\lambda > 0$ is a real parameter, and 
\begin{equation}
	\label{Eq:Conditionsf}
	f\in C^1([0,\infty)), \ f\text{ is nondecreasing, } f(0)>0, \text{ and } \lim_{t \rightarrow +\infty} \dfrac{f(t)}{t} = + \infty .
\end{equation}
It is known (see \cite{RosOtonSerra-Extremal}) that problem \eqref{Eq:ExtremalProblem} admits an increasing family of minimal\footnote{Here, \emph{minimal} means that $u_\lambda$ is smaller than any other solution (and, as a byproduct, smaller than any supersolution).} stable solutions $\setcond{u_\lambda}{0<\lambda<\lambda^\star}$, with $u_\lambda > 0$ being bounded in $\Omega$, up to a certain finite extremal parameter~$\lambda^\star$. For $\lambda > \lambda^\star$ there is no solution to \eqref{Eq:ExtremalProblem}, even in the $L^1$-weak sense.\footnote{\label{Footnote:L1weak}We say that $ u \in L^1(\Omega)$ is a stable $L^1$-weak solution of \eqref{Eq:ExtremalProblem} if $f(u) \dist (\cdot, \partial \Omega)^s \in L^1(\Omega)$ and
	\begin{equation}
		\label{Eq:DefinitionL1weakSol}
		\int_\Omega u \, \fraclaplacian \zeta \d x' = \int_\Omega \lambda f(u) \zeta \d x' 
	\end{equation}
	for all $\zeta$ such that $\zeta$ and $\fraclaplacian \zeta$ are bounded in $\Omega$ and $\zeta \equiv 0$ in $\R^n\setminus \Omega$.}
The pointwise limit of $\{u_\lambda\}$, as $\lambda \nearrow \lambda^\star$, is a stable $L^1$-weak solution of \eqref{Eq:ExtremalProblem} for $\lambda=\lambda^\star$. 
Such solution, denoted by $u^\star$, is called the \emph{extremal solution} of \eqref{Eq:ExtremalProblem}. 
A fundamental question is to determine whether it is bounded or not.

The main result of our  paper can be used (through \Cref{Coro:LinftyConvexDiriclet}) to establish the boundedness of the extremal solution in dimensions $n\leq 4$ for $\s=1/2$ when $\Omega$ is a convex domain and $f$ is a convex nonlinearity.

\begin{corollary}
	\label{Coro:Extremal}
	For some $\gamma >0$, let $f$ be a $C^{1,\gamma}$ convex function  satisfying \eqref{Eq:Conditionsf}.
	Let $\Omega\subset \R^n$ be a bounded convex $C^1$ domain and let $u^\star$ be the extremal solution to \eqref{Eq:ExtremalProblem} for $\s = 1/2$.
	
	If $1 \leq n \leq 4$, then $u^\star \in L^\infty(\Omega)$.
\end{corollary}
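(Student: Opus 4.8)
The plan is to obtain the $L^\infty$ bound for $u^\star$ by applying the a priori estimate of \Cref{Coro:LinftyConvexDiriclet} to the approximating family of minimal solutions $\{u_\lambda\}_{0<\lambda<\lambda^\star}$ and then letting $\lambda \nearrow \lambda^\star$. The key point that makes this work is that the constant in \eqref{Eq:LinftyConvex} depends only on $\Omega$ and is completely independent of the nonlinearity, so it does not degenerate along the approximation.

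First I would recall, from \cite{RosOtonSerra-Extremal} as summarized in \Cref{Subsec:Extremal}, that for every $\lambda \in (0,\lambda^\star)$ the minimal solution $u_\lambda$ is positive, bounded in $\Omega$, stable, and vanishes in $\R^n \setminus \Omega$; in particular $u_\lambda \in L^\infty(\Omega) \cap H^{1/2}(\R^n)$, and it solves $\halflaplacian u_\lambda = \lambda f(u_\lambda)$ in $\Omega$. Since $f$ satisfies \eqref{Eq:Conditionsf} it is nondecreasing with $f(0)>0$, hence $f>0$, and therefore $\lambda f$ is a nonnegative convex $C^{1,\gamma}$ function for every $\lambda>0$. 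Thus $u_\lambda$ meets all the hypotheses of \Cref{Coro:LinftyConvexDiriclet}, which gives
\[
	\norm{u_\lambda}_{L^\infty(\Omega)} \leq C_\Omega \norm{u_\lambda}_{L^1(\Omega)}, \qquad 0 < \lambda < \lambda^\star,
\]
with $C_\Omega$ depending only on $\Omega$. To control the right-hand side uniformly in $\lambda$, I would use that $\{u_\lambda\}$ is increasing in $\lambda$ and converges pointwise to $u^\star \in L^1(\Omega)$, so that $0 < u_\lambda \leq u^\star$ a.e.\ in $\Omega$ and hence $\norm{u_\lambda}_{L^1(\Omega)} \leq \norm{u^\star}_{L^1(\Omega)} < \infty$. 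Combining the two bounds yields $\norm{u_\lambda}_{L^\infty(\Omega)} \leq C_\Omega \norm{u^\star}_{L^1(\Omega)}$ for all $\lambda$, and letting $\lambda \nearrow \lambda^\star$ (using $u_\lambda \nearrow u^\star$ pointwise a.e., or monotone convergence) gives $\norm{u^\star}_{L^\infty(\Omega)} \leq C_\Omega \norm{u^\star}_{L^1(\Omega)} < \infty$, which is the claim.

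Since all the substantial analysis has already been carried out in \Cref{Th:Holder} and \Cref{Coro:LinftyConvexDiriclet}, I do not expect a genuine obstacle here; the only points requiring a little care are the $\lambda$-uniformity of the constant $C_\Omega$ in the passage to the limit (guaranteed precisely because the estimate in \Cref{Coro:LinftyConvexDiriclet} does not see the nonlinearity, so it applies verbatim with $\lambda f$ in place of $f$) and the finiteness of $\norm{u^\star}_{L^1(\Omega)}$, which is built into the definition of $u^\star$ as an $L^1$-weak solution of \eqref{Eq:ExtremalProblem}. Note that the dimensional restriction $1 \leq n \leq 4$ enters only through \Cref{Coro:LinftyConvexDiriclet}, and hence ultimately through the Hölder estimate \eqref{Eq:Holder}.
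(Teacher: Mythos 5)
Your proposal is correct and follows essentially the same route as the paper: apply the $\lambda$-uniform $L^\infty$ estimate of \Cref{Coro:LinftyConvexDiriclet} to the bounded minimal solutions $u_\lambda$ (using that $\lambda f$ is nonnegative, convex, $C^{1,\gamma}$ for each fixed $\lambda$), bound $\norm{u_\lambda}_{L^1(\Omega)}$ by $\norm{u^\star}_{L^1(\Omega)}<\infty$ via $0\leq u_\lambda\leq u^\star$, and pass to the limit $\lambda\nearrow\lambda^\star$. Your emphasis on the $\lambda$-independence of $C_\Omega$ (because the estimate is blind to the nonlinearity) is exactly the point the paper makes.
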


The result is established by applying the estimate of \Cref{Coro:LinftyConvexDiriclet}, for $\lambda < \lambda^\star$, to the bounded solutions $u_\lambda$. 
Then, since it is easy to prove that $u^\star\in L^1(\Omega)$, the estimates for $u_\lambda$ are uniform in $\lambda$ and one can take the limit $\lambda \to \lambda^\star $ to deduce the boundedness of~$u^\star$.

Besides \Cref{Coro:Extremal}, the available results on the boundedness of the extremal solution for the fractional problem \eqref{Eq:ExtremalProblem} are those contained in \cite{RosOtonSerra-Extremal}, \cite{RosOton-Gelfand}, and \cite{SanzPerela-Radial}.
We briefly describe them next.
See \Cref{Fig:KnownResults} for a schematic representation of the dimensions achieved in each result.

In~\cite{RosOtonSerra-Extremal}, Ros-Oton and Serra showed that if $f$ is convex then $u^\star$ is bounded whenever $n < 4s$, extending to the nonlocal setting the arguments of Nedev~\cite{Nedev}.
The authors also established the boundedness of the extremal solution in dimensions $n < 10s$ under the more restrictive assumption of $f$ being $C^2$ and $f f'' /(f')^2$ having a limit at infinity; this is an extension of the arguments of Crandall and Rabinowitz~\cite{CrandallRabinowitz}. 
The estimates for stable solutions in~\cite{RosOtonSerra-Extremal} depend on $f$, in contrast with ours, and do not provide any estimate for $n=1$ if $\s$ is small. 
The results from \cite{RosOtonSerra-Extremal} have been recently extended to the case of systems of two equations by  Fazly~\cite{Fazly-Extremal}.

Shortly after \cite{RosOtonSerra-Extremal}, Ros-Oton~\cite{RosOton-Gelfand} obtained an optimal result  in the case of the exponential nonlinearity $f(u) = e^u$, extending the arguments of~\cite{DavilaDupaigneMontenegro} commented below, by showing that $u^\star$ is bounded whenever\footnote{Condition \eqref{Eq:OptimalDimensionsGammas} is equivalent to 
	$$
	|x'|^\frac{2\s + n }{2}  \fraclaplacian |x'|^\frac{2\s - n }{2}  < \lim_{\varepsilon \to 0} \dfrac{2 \s}{\varepsilon} |x'|^{n - \varepsilon}\fraclaplacian |x'|^{2\s - n + \varepsilon} \quad \text{ for } x'\in\R^n \setminus \{ 0\},
	$$
	which is indeed the expression arising in \cite{RosOton-Gelfand}.
	Note that the two quantities in this inequality are constants independent of $|x'|$. 
}
\begin{equation}
	\label{Eq:OptimalDimensionsGammas}
	\dfrac{\Gamma^2(\frac{n + 2s}{4})}{\Gamma^2(\frac{n - 2s}{4})} < \dfrac{\Gamma(\frac{n}{2})\Gamma(1+s)}{\Gamma(\frac{n - 2s}{2})}
\end{equation}
and $\Omega$ is symmetric and convex with respect to all the coordinate directions.
In \Cref{Fig:KnownResults} below we represent graphically the dimensions given by \eqref{Eq:OptimalDimensionsGammas}.
In particular, we see that \eqref{Eq:OptimalDimensionsGammas} holds in dimensions $n \leq 7$ for all $s \in (0,1)$, and additionally in dimension $n=8$ for some fractions $\s$ which include $\s=1/2$.

\begin{figure}[h]
	\input{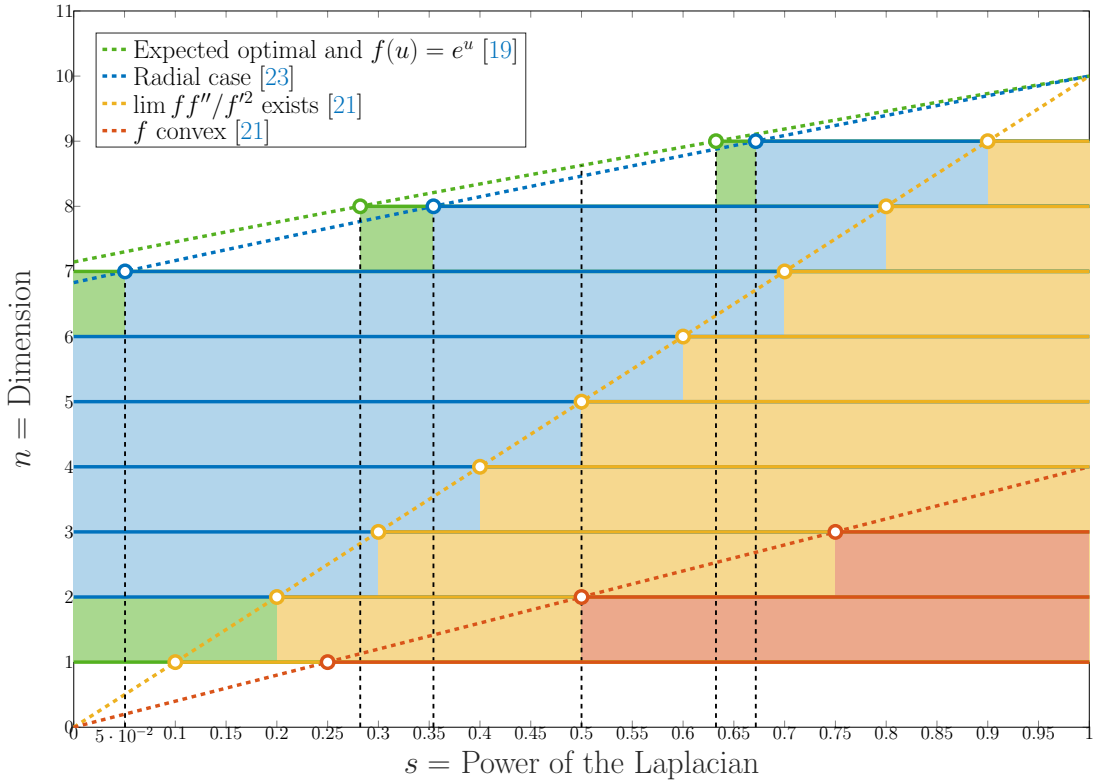}	
	\caption{Graphical representation of the known results on the boundedness of the extremal solution.
		For each color, the horizontal solid lines represent the rank of $\s$ for which   $u^\star$ is bounded in that dimension.
		The dashed colored lines represent the threshold values $(n,\s)$ found in each paper: in green, the values given by inequality \eqref{Eq:OptimalDimensionsGammas}; in blue, $n= 2(\s+2+\sqrt{2(\s+1)})$; in yellow, $n= 10\s$; in red, $n= 4\s$.
		These lines correspond, in descending order, to the four cases listed in the legend.
		The black dashed vertical lines help to locate some limiting values of $\s$.}\label{Fig:KnownResults}
\end{figure}

Condition \eqref{Eq:OptimalDimensionsGammas} is optimal since, when \eqref{Eq:OptimalDimensionsGammas} fails, $\log |x'|^{-2s}$ is a singular stable solution to $\fraclaplacian u = \lambda e^u$ in all of $\R^n$ for some $\lambda>0$ (see Remark~2.2 of~\cite{RosOton-Gelfand}).

One expects \eqref{Eq:OptimalDimensionsGammas} to provide the optimal range for the values of $n$ and $\s$ in which stable solutions are bounded also for all nonlinearities $f$ (or for a large class of them).
Nevertheless, this is not known even in the radial case,\footnote{For radially symmetric solutions, the local equation $-\Delta u= f(u)$ becomes a second order ODE. 
	Instead, although $\fraclaplacian u= f(u)$ can be written as a one-dimensional equation in the radial case, it still involves an integro-differential operator which is much more involved than an ODE.} as explained next.

The problem in a ball is studied in a recent paper by Sanz-Perela~\cite{SanzPerela-Radial}.
It is proved that in dimensions $2\leq n< 2(\s+2+\sqrt{2(\s+1)})$ the extremal solution to \eqref{Eq:ExtremalProblem}, with $\Omega= B_1$, is bounded.
This extends the results of Cabré and Capella~\cite{CabreCapella-Radial} for the local case, and of Capella, Dávila, Dupaigne, and Sire~\cite{CapellaEtAl} for another nonlocal operator. 
The condition on the dimension in this radial case (in blue in \Cref{Fig:KnownResults}) is slightly worse than the expected optimal one \eqref{Eq:OptimalDimensionsGammas} (in green), but it is a significant improvement of $n<10\s$ (in yellow).
In \Cref{Subsec:IdeasOfProofs}  we will explain why \cite{SanzPerela-Radial} and the current work do not reach the expected optimal dimensions.

A further question of interest is the partial regularity of singular stable solutions.
In another very recent work, Hyder and Yang~\cite{HyderYang-SingularSet} have established that the dimension of the singular set of a stable solution to $\fraclaplacian u = e^u$ in $\Omega$ is at most $n-10\s$.

Condition \eqref{Eq:OptimalDimensionsGammas} in the case $\s=1/2$ appeared in the literature for the first time in the paper~\cite{DavilaDupaigneMontenegro} by Dávila, Dupaigne, and Montenegro.
They dealt with a boundary reaction problem which is related to, but different from, our equation $\halflaplacian u = \partial_v v = f(v) $ on $\partial \R^{n+1}_+$ (where $v$ is the harmonic extension of $u$ in the half-space).
Besides the case $f(u) = \lambda e^u$, \cite{DavilaDupaigneMontenegro} also studied power-like nonlinearities, for which a more precise condition on the dimension, now depending on the power $p$, was found.
It corresponds to \eqref{Eq:GammasP} below for $\s= 1/2$.

\subsection{Entire solutions}
\label{Subsec:EntireSolutions}

A related issue to our problem is the classification of entire solutions to $\fraclaplacian u = f(u)$ in all of $\R^n$ which are stable or, more generally, have finite Morse index.
This problem has been treated very recently in a series of works, in which the condition \eqref{Eq:OptimalDimensionsGammas} appears again.

For the power case $f(u) = |u|^{p-1}u$, Dávila, Dupaigne, and Wei~\cite{DavilaDupaigneWei-LaneEmden} give a complete classification of finite Morse index solutions.
Namely, they prove that when either $1 < p < (n + 2\s)/(n - 2\s)$, or $p > (n + 2\s)/(n - 2\s)$ and
\begin{equation}
	\label{Eq:GammasP}
	\dfrac{\Gamma^2(\frac{n + 2s}{4})}{\Gamma^2(\frac{n - 2s}{4})} < p \dfrac{\Gamma ( \frac{n}{2} - \frac{\s}{p-1}) \Gamma ( \s +  \frac{\s}{p-1}) }{  \Gamma ( \frac{\s}{p-1}) \Gamma ( \frac{n-2\s}{2} - \frac{\s}{p-1}) },
\end{equation}
every finite Morse index entire solution is identically zero.
Note that taking the limit $p\to +\infty$ in \eqref{Eq:GammasP}, one obtains precisely the expected optimal condition \eqref{Eq:OptimalDimensionsGammas}.

In the case $f(u) = e^u$, Hyder and Yang~\cite{HyderYang-ClassGelfand} have recently proved that no finite Morse index entire solution exists if \eqref{Eq:OptimalDimensionsGammas} holds ---Duong and Nguyen~\cite{DuongNguyen-liouville} had previously reached the condition $n < 10\s$.

Our main result allows to establish a Liouville theorem for stable solutions in dimensions $n\leq4$ for general nonnegative convex nonlinearities. We follow the ideas of Dupaigne and Farina~\cite{DupaigneFarina} for the local case, which consist of applying a universal Hölder estimate to the blow-downs of a stable solution.
\begin{corollary}
	\label{Coro:Liouville}
	Let $u\in C^2(\R^n)\cap L^1_{1/2}(\R^n)$ be a stable solution to $(-\Delta)^{1/2}u = f(u)$ in $\R^n$, where $f$ is a nonnegative convex $C^{1,\gamma}$ function for some $\gamma >0$.
	Assume that 
	\begin{equation}
		\label{Eq:LowerBoundu}
		u(x') \geq - c_0  \log (2 + |x'|) \quad \text{ for } x'\in \R^n,
	\end{equation}
	where $c_0 >0$ is a constant.
	
	If $1\leq n\leq 4$, then $u$ must be constant.
	
\end{corollary}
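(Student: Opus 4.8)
The plan is to carry out the blow-down argument of Dupaigne and Farina, using \Cref{Th:Holder} as the universal Hölder estimate. First I would normalize the solution: replacing $u$ by $u-u(0)$ changes $f$ into the function $t\mapsto f(t+u(0))$, which is still nonnegative, convex and $C^{1,\gamma}$, leaves both the equation and the stability inequality \eqref{Eq:Stability} unchanged, and enlarges $c_0$ in \eqref{Eq:LowerBoundu} by at most $|u(0)|$; hence we may assume $u(0)=0$. For every $R\ge1$ set $u_R(x'):=u(Rx')$. Then $u_R\in C^2(\R^n)\cap L^1_{1/2}(\R^n)$, and a rescaling of \eqref{Eq:Stability} shows that $u_R$ is a stable solution of $\halflaplacian u_R=R\,f(u_R)$ in $\R^n$. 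As $Rf$ is again nonnegative convex and $C^{1,\gamma}$, \Cref{Th:Holder} applies and, after rescaling the conclusion back to scale $R$, yields for $1\le n\le4$
\begin{equation*}
	\seminorm{u}_{C^\alpha(\overline{B}_{R/2})}=R^{-\alpha}\,\seminorm{u_R}_{C^\alpha(\overline{B}_{1/2})}\le C\,R^{-\alpha}\,\norm{u_R}_{L^1_{1/2}(\R^n)},
\end{equation*}
with $\alpha>0$ and $C$ dimensional. Thus the whole statement follows once we establish the a priori bound $\norm{u_R}_{L^1_{1/2}(\R^n)}\le C(1+\log R)$ for all $R\ge1$: indeed, this makes $\seminorm{u}_{C^\alpha(\overline{B}_{R/2})}\to0$ as $R\to\infty$, which forces $u$ to be constant.

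To bound $\norm{u_R}_{L^1_{1/2}(\R^n)}$ I would pass to the harmonic extension $v$ of $u$ in $\R^{n+1}_+$. The point is that the weight $(1+|x'|^2)^{-(n+1)/2}$ in $\norm{\cdot}_{L^1_{1/2}(\R^n)}$ equals, up to a dimensional constant $c_n$, the Poisson kernel $P_1$ of $\R^{n+1}_+$ at height $1$; since the harmonic extension of $u_R$ is $(x,y)\mapsto v(Rx,Ry)$, the convolution $(P_1*u_R)(0)$ equals $v(0,R)$. Writing $|u|=u+2u_-$ and using \eqref{Eq:LowerBoundu} together with the explicit form of $P_1$ to see that $\int_{\R^n}u_-(Rx')\,P_1(x')\d x'\le C(1+\log R)$, one obtains
\begin{equation*}
	\norm{u_R}_{L^1_{1/2}(\R^n)}=c_n^{-1}\int_{\R^n}|u(Rx')|\,P_1(x')\d x'=c_n^{-1}\Big(v(0,R)+2\int_{\R^n}u_-(Rx')\,P_1(x')\d x'\Big),
\end{equation*}
so everything reduces to proving $v(0,R)\le C(1+\log R)$.

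This is the crucial step, and it is exactly where the hypothesis $f\ge0$ is used. Since $\partial_\nu v=f(u)\ge0$ on $\partial\R^{n+1}_+$, the even reflection $\tilde v(x',y):=v(x',|y|)$ satisfies $\Delta\tilde v\le0$ in $\mathcal D'(\R^{n+1})$ (the distribution $\Delta\tilde v$ is $-2f(u)$ times the surface measure on $\{y=0\}$), so $\tilde v$ is superharmonic on all of $\R^{n+1}$; in the local case it is $u$ itself that is superharmonic, but $\halflaplacian$ has no solid super-mean-value property, and the reflected extension is the substitute. From the Poisson representation and \eqref{Eq:LowerBoundu} one gets the crude lower bound $\tilde v(x',y)\ge-C\big(1+\log(2+|x'|+|y|)\big)$, while $\tilde v(0)=u(0)=0$. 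The super-mean value inequality then gives $\fint_{B_\rho}\tilde v\le\tilde v(0)=0$ for every ball $B_\rho\subset\R^{n+1}$ centered at the origin, hence $\fint_{B_\rho}\tilde v_+\le\fint_{B_\rho}\tilde v_-\le C(1+\log(2+\rho))$. Finally, $v$ is harmonic in $\R^{n+1}_+$, so there $v_+=\tilde v_+$ is subharmonic; applying the sub-mean value inequality on the ball $B_{R/2}\big((0,R)\big)$, which is contained in $\{y>0\}\cap B_{2R}(0)$, we conclude
\begin{equation*}
	v(0,R)\le v_+(0,R)\le\fint_{B_{R/2}((0,R))}\tilde v_+\le 4^{\,n+1}\fint_{B_{2R}(0)}\tilde v_+\le C(1+\log R).
\end{equation*}

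Collecting the displays gives $\seminorm{u}_{C^\alpha(\overline{B}_{R/2})}\le C\,R^{-\alpha}(1+\log R)\to0$ as $R\to\infty$, so $u$ is constant on $\R^n$. I expect the third paragraph to be the only real difficulty: because $\halflaplacian$ does not enjoy a ball super-mean-value property, one is forced up to the extension, and the fact that its even reflection is genuinely superharmonic is what replaces superharmonicity of $u$ in the local argument — this is also the precise place where $f\ge0$ enters, whereas convexity of $f$ is needed only through \Cref{Th:Holder}. Minor technical points to check are the regularity required to justify the reflection identity (that $v$ is $C^1$ up to $\partial\R^{n+1}_+$, from $u\in C^2$ and the $C^{1,\gamma}$ Neumann datum) and that the blow-downs $u_R$ genuinely inherit the stability inequality.
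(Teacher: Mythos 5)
Your proof is correct and reaches the same conclusion by a genuinely similar but slightly different bookkeeping than the paper's. Both arguments rely on the same two essential ingredients: the Poisson-representation lower bound $v(x)\geq -c_0(\log|x|+c_1)$ for $|x|\geq1$, and the (weak) superharmonicity of the even reflection $\tilde v(x',y)=v(x',|y|)$ in $\R^{n+1}$, which is exactly where $f\geq0$ enters. The difference is in how the logarithmic growth is fed into the rescaled Hölder estimate. The paper first extracts from \eqref{Eq:H1ControledByL1Ext} the variant bound $[u]_{C^\alpha(\overline B_{1/2})}\leq C\|v\|_{L^1(\B_1^+)}$, then applies it to the blow-downs and controls $\fint_{\B_R^+}|v|$ directly by the half-ball super-mean-value inequality $\fint_{\B_R^+}v\leq v(0)$ together with the lower bound on $v$. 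You instead use \Cref{Th:Holder} as stated, and make the nice observation that $\|u_R\|_{L^1_{1/2}(\R^n)}$ is, up to a dimensional constant, the extension value $v(0,R)$ plus a Poisson integral of $u_-$ that is $\lesssim1+\log R$; you then bound $v(0,R)$ by combining the super-mean-value inequality for $\tilde v$ on full balls centered at the origin with the sub-mean-value inequality for $v_+=\tilde v_+$ on the interior ball $B_{R/2}((0,R))$. Both routes are valid; the paper's is marginally shorter because it avoids the detour through $v(0,R)$ and the positive/negative-part split, while yours has the advantage of using \Cref{Th:Holder} as a black box rather than the intermediate estimate \eqref{Eq:H1ControledByL1Ext}. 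The regularity needed to justify the reflection (that $v$ is $C^1$ up to $\{y=0\}$) is indeed available, from \Cref{Lemma:RegularityHorizontalGrad}~(c).
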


\subsection{Outline of the proofs}
\label{Subsec:IdeasOfProofs}

Here we summarize the main ideas in the proofs of \Cref{Th:Holder,Th:GeomStabilityFullGradient_s=1/2}.
We use the extension problem for the half-Laplacian as a main tool. 
For this, let us settle our notation.

Through all the paper we will denote points in $\R^{n+1}_+ = \R^n \times (0,+\infty)$ by $x= (x',y)$.
Recall that the half-Laplacian of a regular enough function $u:\R^n \to \R$ can be computed through its harmonic extension in $\R^{n+1}_+$. 
Indeed, if $v$ solves $\Delta v = 0$ in $\R^{n+1}_+$ and $v(\cdot,0) = u$, then $- v_y(\cdot,0) = \halflaplacian u$.

We will denote  balls in $\R^n$ and $\R^{n+1}$ by $B_R$ and $\B_R$ respectively, while 
$$
\B_{R}^+ := \B_R \cap \R^{n+1}_+ = \setcond{x=(x',y)\in \R^{n+1}_+}{|(x',y)|< R}
$$
will be half-balls in $\R^{n+1}_+$.
If $x_0'\in \R^n$, $\B_{R}^+(x_0'):=(x_0',0)+\B_{R}^+$.
In addition, we will use the notation
$$
r := |x| \quad \text{ and } \quad
v_r := \dfrac{x}{r}\cdot \nabla v (x) ,  \quad \text{ for } x \in \R^{n+1}_+.
$$

The general strategy to obtain estimates for stable solutions consists of choosing an appropriate test function in the stability condition \eqref{Eq:Stability}, which for $\s = 1/2$ can be written, using the extension problem, as
\begin{equation}
	\label{Eq:StabilityExtensionHalfLaplacian}
	\int_\Omega f'(u) \xi^2 \dx' \leq \int_{\R_+^{n+1}}  |\nabla \xi |^2 \d x
\end{equation}
for every $\xi \in H^1( \R^{n+1}_+)$ such that its trace in $\R^n$ has compact support in $\Omega$; see \Cref{Sec:Step1ForAlls}.
The choices of test function  taken in each of the previously mentioned papers \cite{RosOtonSerra-Extremal,RosOton-Gelfand,SanzPerela-Radial} were the following (up to a cut-off): $\xi = h(u)$ for some $h$ depending on $f$ in \cite{RosOtonSerra-Extremal}, $\xi = |x'|^{- a}$ for some $a>0$ in \cite{RosOton-Gelfand}, and $\xi = |x'|^{-b} (x' \cdot \nabla_{x'} v)$ for some $b>0$ in \cite{SanzPerela-Radial}. 
Instead, we will make use of two new test functions, \eqref{Eq:TestFunctionRadialDerivative} and \eqref{Eq:TestFunctionFullGrad} below.

Our proof of \Cref{Th:Holder} follows the main lines of that in \cite{CabreFigalliRosSerra-Dim9} for the local case, but confronts a delicate issue in a compactness argument (which remains open for $\s\neq 1/2$).
The main arguments in the proof can be summarized in four steps:

$ \bullet$  \textbf{Step 1.}
A key point is to choose 
\begin{equation}
	\label{Eq:TestFunctionRadialDerivative}
	\xi = r^{1 + (1 - n)/2} v_r \zeta = |x|^{-(n-1)/2} (x\cdot \nabla v) \zeta 
\end{equation}
as test function in the stability condition \eqref{Eq:StabilityExtensionHalfLaplacian}, where $v$ is the harmonic extension of $u$ and $\zeta$ is a smooth cut-off function. 
This leads to the following bound (see \Cref{Prop:EstimatevrAnnulus}):
\begin{equation}
	\label{Eq:EstimatevrAnnulusHalfLap}
	\int_{\B_{1/2}^+}  r^{1 - n} v_r^2 \d x \leq C \int_{\B_{3/4}^+ \setminus \B_{1/2}^+}  |\nabla v|^2  \d x \quad \text{ if }  2 \leq n \leq 4.
\end{equation}
The crucial fact here is that, after this choice of $\xi$, the nonlinearity $f$ no longer appears in the estimates.

The condition $2 \leq n \leq 4$ (which for general $\s$ reads $2\s < n < 10 \s$ in \Cref{Prop:EstimatevrAnnulus} below) is not the expected optimal range of dimensions.
The reason why our proof does not reach such range is that, although we believe that the choice $\xi |_{\{y= 0\}} = |x'|^{-(n-1)/2}(x' \cdot \nabla_{x'} u)\zeta$ should be optimal, the election of its extension \eqref{Eq:TestFunctionRadialDerivative} is not.
The same issue occurs in the radial case treated in \cite{SanzPerela-Radial}, where still another different extension of the test function $\xi |_{\{y= 0\}}$ is chosen.

$ \bullet$  \textbf{Step 2.}
To relate the left and right-hand sides of \eqref{Eq:EstimatevrAnnulusHalfLap},  we prove that, under a doubling condition on $|\nabla v|^2\d x$, the quantities $v_r$ and $\nabla v $ are comparable, in $L^2$, in an annulus (see \Cref{Lemma:v_rControlsGradientInAnnulus}).
This combined with \eqref{Eq:EstimatevrAnnulusHalfLap} will lead to the geometric-decay bound
\begin{equation}
	\label{Eq:RadialDecayvrIntro}
	\int_{\B_R^+}  r^{1 - n} v_r^2 \d x 
	\leq C  R^{2 \alpha} \norm{\nabla v}^2_{L^2(\B^+_{3/4})} \quad \text{ if } 2 \leq n \leq 4,
\end{equation}
for all small enough radii $R$ and for some $\alpha > 0$.

In order to establish the comparability of $v_r$ and $\nabla v$ in annuli we proceed as in~\cite{CabreFigalliRosSerra-Dim9}, using a compactness argument.
To carry it on, a control on higher order Sobolev norms of $v$ is needed, and for this it is crucial to use the new  stability condition in geometric form, \eqref{Eq:GeomStabilityFullGradient_s=1/2}, given in \Cref{Th:GeomStabilityFullGradient_s=1/2}.

To establish \Cref{Th:GeomStabilityFullGradient_s=1/2} we choose
\begin{equation}
	\label{Eq:TestFunctionFullGrad}
	\xi=|\nabla v|\eta 
\end{equation}
in the stability condition.
At some point along the proof, to show the sign of a certain term, we use crucially the convexity and nonnegativeness of $f$.

$ \bullet$  \textbf{Step 3.}
Combining the decay of the weighted radial derivative and a Morrey-type estimate from \cite{CabreQuantitative} (\Cref{Lemma:MorreyAverage}), we obtain the Hölder bound
\begin{equation}
	\label{Eq:HolderEstimateExt}
	\seminorm{u}_{C^\alpha( \overline{B}_{1/2})} \leq C
	\norm{\nabla v}_{L^2(\B^+_{3/4})} \quad \text{ if } 2 \leq n \leq 4.
\end{equation}

At this point, it will only remain to control the $L^2$ norm of $\nabla v$ by the $L^1_{1/2}(\R^n)$ norm of~$u$. This is done in \Cref{Sec:H1Control}.
For this, the key point is to realize that, thanks to \Cref{Th:GeomStabilityFullGradient_s=1/2}, we have a control of the $L^2$ norm of $D^2v$ in terms of a lower order norm of~$v$.
Using this and two interpolation results from~\cite{CabreQuantitative} (proved  in \Cref{Sec:Interpolation} below), we control the $H^1$ norm of~$v$ by its $L^1$ norm and, in turn, by the $L^1_{1/2}$ norm of $u$. 

This will also lead,  through a trace inequality, to the $H^{1/2}$ bound \eqref{Eq:H^1/2Estimate} for $u$ in all dimensions.

$ \bullet$  \textbf{Step 4.}
Finally, to obtain the result in dimension $n=1$, we just need to consider the solution as defined in $\R^2$ after the addition of an artificial variable, and apply the result for $n=2$.
In \Cref{Sec:AddDimensions} we establish the necessary results to carry out this process.

\subsection{Plan of the article}

In \Cref{Sec:Step1ForAlls} we establish the key estimate \eqref{Eq:EstimatevrAnnulusHalfLap} of Step~1, which we carry on in the general setting of the equation $\fraclaplacian u = f(u)$ with $\s\in (0,1)$.
In \Cref{Sec:GeomStability} we establish the new geometric stability inequality given by \Cref{Th:GeomStabilityFullGradient_s=1/2}, while 
\Cref{Sec:GeometricDecayGradient} is devoted to Step~2 described above.
Then, in \Cref{Sec:H1Control} we establish the $H^{1/2}$ estimate of Step~3, leading to the proofs of \Cref{Th:Holder} and \Cref{Coro:LinftyConvexDiriclet,Coro:Extremal,Coro:Liouville} in \Cref{Sec:LinftyHalfLaplacian}.
Finally, in \Cref{Sec:AddDimensions} we present some results related to adding an artificial variable.

In the appendices we collect some results that are used through the paper, and we recall the proofs of the two interpolation inequalities from \cite{CabreQuantitative} needed in \Cref{Sec:H1Control}.

\section{A key weighted radial derivative estimate}
\label{Sec:Step1ForAlls}

The goal of this section is to prove estimate~\eqref{Eq:EstimatevrAnnulusHalfLap}.
Although in this paper we only consider the case $\s=1/2$, for future reference  all the computations in this section are done for the equation $\fraclaplacian u = f(u)$ in $B_1$, with $\s\in (0,1)$.

Along this section we will assume that $u\in C^2(B_1)\cap L^1_\s(\R^n)$ and that it solves (pointwise) the equation $\fraclaplacian u = f(u)$ in $B_1$, with $f\in C^{1,\gamma}$ for some $\gamma >0$. 
In the following remark we comment on these assumptions and discuss some extensions of our results to less regular classes of stable solutions.

\begin{remark}
	\label{Remark:Regularity}
	We can consider three classes of stable solutions to \eqref{Eq:SemilinearEquation} in terms of their regularity: $L^1$-weak solutions ---defined in footnote~\ref{Footnote:L1weak}---, energy solutions ---critical points of the functional $E(\cdot)$ in \eqref{Eq:Energy}---, and pointwise (or classical) solutions.
	As mentioned in Remark~2.1 of \cite{RosOtonSerra-Extremal} (see \cite{SanzPerela-Approx} for nonzero values in $\R^n \setminus \Omega$), these three notions of solution coincide whenever the solution is bounded and $f$ sufficiently regular.\footnote{Indeed, if $u$ is an $L^1$-weak solution to $\fraclaplacian u = f(u)$ in $B_1$ which is bounded in $B_1$ (note that in the paper \cite{RosOtonSerra-Extremal} $L^1$-weak solutions are called \textit{weak solutions}), and $f\in C^{1,\gamma}$ for some $\gamma >\max\{0, 1-2\s\}$, then by considering the convolution of $u$ with a standard mollifier and using regularity results for the fractional Laplacian it follows readily that $u\in C^2(B_1)$ ---see Corollaries~2.3 and 2.5 in \cite{RosOtonSerra-Regularity}; note that here we do not use the stability of $u$. }
	Thus, our main result holds for bounded $L^1$-weak or energy solutions. 
	
	When $u$ is not assumed a priori to be bounded, the situation is more delicate.
	On the one hand, our result leads to the interior Hölder regularity of energy stable solutions or, more generally, $L^1$-weak stable solutions which belong to the energy space associated to $E(\cdot)$.
	Indeed, it can be proved that any stable solution $u$ in these classes can be approximated by bounded stable solutions $u_k$ (at least when $f$ is increasing and convex and $\s \geq 1/2$; see~\cite{SanzPerela-Approx}).
	Then, the estimates for $u_k$ pass on the limiting function $u$.
	
	On the other hand, as in the local case $\s=1$, our main result does not hold for general $L^1$-weak solutions, as there exist unbounded $L^1$-weak solutions which satisfy the stability hypothesis in the dimensions given by \Cref{Th:Holder}; see~\cite{SanzPerela-Approx}. 
	The issue here is that these unbounded $L^1$-weak solutions are not in the energy space associated to $E(\cdot)$.
	
\end{remark}

To state the main result of this section, we need to introduce the extension problem for all the fractional powers $\s\in(0,1)$ of the Laplacian. 
Recall that if $v$ solves
\begin{equation}
	\label{Eq:ExtensionProblemDirichlet}
	\beqc{rcll}
	\div (y^a \nabla v ) &= & 0 & \text{in } \R^{n+1}_+ ,\\
	v\, &= &u &  \text{on } \partial \R^{n+1}_+ = \R^n,
	\eeqc
\end{equation}
where $a:= 1 - 2\s$, then
\begin{equation}
	\label{Eq:DirichletToNeumannRelation}
	\dfrac{\partial v}{\partial \nu^a} := - \lim_{y \downarrow 0} y^a v_y = \dfrac{1}{d_\s} \fraclaplacian u 
\end{equation}
for a positive constant $d_\s$ which depends only on $\s$, and such that $d_{1/2}=1$. 
The function~$v$, which is smooth in $\R^{n+1}_+ $, is called the $\s$-\emph{harmonic extension} of $u$ and can be expressed in terms of $u$ through convolution with a Poisson kernel ---see \eqref{Eq:vPoisson} in our first appendix.
Along the paper we will always denote by $v$ the $\s$-harmonic extension of $u$.

\begin{proposition}
	\label{Prop:EstimatevrAnnulus}
	Let $n\geq 1$,  $\s \in (0,1)$, and let $u\in C^2(B_1)\cap L^1_\s(\R^n)$ be a stable solution to $\fraclaplacian u = f(u)$ in $B_1\subset \R^n$, where $f$ is a  $C^{1,\gamma}$ function for some $\gamma >0$.
	Let $v$ be the $\s$-harmonic extension of $u$. 
	
	Then, 
	\begin{equation}
		\label{Eq:EstimatevrAnnulus}
		\int_{\B_{1/2}^+} y^{1-2\s} r^{2\s - n} v_r^2 \d x \leq C \int_{\B_{3/4}^+ \setminus \B_{1/2}^+} y^{1-2\s} |\nabla v|^2  \d x \quad \quad \text{if } n \in (2\s, 10\s),
	\end{equation}
	for some constant $C$ depending only on $n$ and $\s$.
\end{proposition}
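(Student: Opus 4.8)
The plan is to insert the test function $\xi = r^{\s + (1-n)/2} v_r \zeta$ into the extended stability inequality, where $\zeta$ is a smooth cut-off equal to $1$ on $\B_{1/2}^+$ and supported in $\B_{3/4}^+$, and $v_r = \frac{x}{r}\cdot\nabla v$. The reason this choice is natural is that $v_r$ essentially satisfies a linearized equation: since $v$ is $a$-harmonic ($a=1-2\s$) and homogeneous operators commute with scalings, the function $w:=x\cdot\nabla v$ solves $\div(y^a\nabla w)=0$ in $\R^{n+1}_+$ as well, and on the boundary one computes $\partial_{\nu^a} w = \frac{1}{d_\s} x'\cdot\nabla_{x'}(\fraclaplacian u) = \frac{1}{d_\s}\bigl(x'\cdot\nabla_{x'}u\bigr)f'(u) = \frac{1}{d_\s} w(\cdot,0)\, f'(u)$ (using $\fraclaplacian u = f(u)$ and differentiating; the homogeneity of $\fraclaplacian$ contributes the extra term that exactly produces $f'(u)$). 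Thus $w$ plays the role of an eigenfunction-like object for the stability quadratic form, and the weight $r^{\s+(1-n)/2}$ is tuned so that the resulting integrals have the right scaling in the annulus. A technical point is that $v_r$ may fail to be globally $H^1$-integrable near the origin or at infinity against the weight $y^a$; one handles this by first working with $\xi$ further truncated away from $x=0$ (multiplying by $\varphi_\ep(r) = \min(1,r/\ep)$ or similar) and from infinity, deriving the estimate with these extra cutoffs, and then letting $\ep\to 0$ — the hypothesis $n>2\s$ is precisely what makes the error terms near $0$ vanish in the limit, while $u\in L^1_\s(\R^n)$ and interior smoothness control the behaviour at infinity.

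After plugging $\xi$ in, the strategy is the standard one: expand $|\nabla\xi|^2$, use the equation for $w=r v_r$ to integrate by parts, and watch the boundary term $\int f'(u)\xi^2$ cancel exactly against the leading boundary contribution coming from $\partial_{\nu^a} w = \frac{1}{d_\s} w\,f'(u)$. This is where the nonlinearity $f$ disappears from the estimate entirely — the only surviving $f$-dependence is through this cancellation. What remains after the cancellation is an inequality of the schematic form
\begin{equation*}
	c(n,\s)\int_{\R^{n+1}_+} y^a r^{2\s-n} v_r^2 \,\zeta^2 \d x \leq \int_{\R^{n+1}_+} y^a\,(\text{lower-order / cut-off terms}) \d x,
\end{equation*}
where the constant $c(n,\s)$ in front of the main term arises from the algebra of differentiating the weight $r^{\s+(1-n)/2}$ and combining it with the $r$-homogeneity of $w$; the condition $2\s < n < 10\s$ is exactly the range in which $c(n,\s)>0$. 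The right-hand side, after using that $\nabla\zeta$ is supported in the annulus $\B_{3/4}^+\setminus\B_{1/2}^+$ and that on that annulus $r$ is bounded above and below, is controlled by $C\int_{\B_{3/4}^+\setminus\B_{1/2}^+} y^a|\nabla v|^2\d x$, which is the claimed bound. One also uses Cauchy--Schwarz / Young's inequality to absorb a term of the form $\int y^a r^{2\s-n} v_r^2\zeta|\nabla\zeta|\cdots$ back into the left-hand side at the cost of enlarging $C$.

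The main obstacle I expect is the justification of the integration by parts and the identity $\partial_{\nu^a}(x\cdot\nabla v) = \frac{1}{d_\s}(x'\cdot\nabla_{x'}u)f'(u)$ at the boundary: one needs $v_r$ (equivalently $w$) to have enough regularity up to $\{y=0\}$ for the Neumann trace to make sense and for the truncation limit $\ep\to 0$ to be legitimate. Since $u\in C^2(B_1)$ and $f\in C^{1,\gamma}$, elliptic regularity for the degenerate operator $\div(y^a\nabla\cdot)$ (Caffarelli--Silvestre / Fabes--Kenig--Serapioni theory) gives that $v$ and its first derivatives are Hölder continuous up to the boundary in the interior of $B_1\times\{0\}$, which suffices; but keeping track of the weights $y^a$ and $r^{2\s-n}$ simultaneously, and verifying that all the boundary and near-origin remainder integrals genuinely tend to zero (again using $n>2\s$), is the delicate bookkeeping part of the argument. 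The determination of the precise constant $c(n,\s)$ and the verification that it is positive exactly on $(2\s,10\s)$ is a finite computation but must be carried out carefully, as it dictates the dimensional restriction in the statement.
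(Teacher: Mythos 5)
Your high-level strategy matches the paper's: test the extended stability inequality with $\xi = r^{\beta} v_r\zeta$ and exploit the linearized equation satisfied by $\mathbf{c} := x\cdot\nabla v = r v_r$ to remove the explicit $f$-dependence. Two points in your plan, however, are concretely wrong, and one of them hides the key missing step.

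First, the exponent is off by $1/2$. Writing $\xi = \mathbf{c}\,\eta$ with $\eta = r^{-\alpha/2}\zeta$, the choice that produces the weight $r^{2\s-n}$ in the conclusion is $\alpha = n-2\s$, i.e.\ $\xi = r^{1+\s-n/2}v_r\zeta$, whereas you propose $r^{\s+(1-n)/2}v_r\zeta = r^{(1+2\s-n)/2}v_r\zeta$, which corresponds to $\alpha = n-2\s+1$. With that choice the surviving weight would be $r^{2\s-n-1}$ rather than $r^{2\s-n}$, and the positivity range for the coefficient would shift to $n\in(2\s-1,10\s-1)$, neither of which matches the statement.

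Second, and more substantively, your boundary identity for $w := x\cdot\nabla v$ is incomplete, and as a result the claimed ``exact cancellation'' is not the full story. Since $w$ is the $\s$-harmonic extension of $x'\cdot\nabla_{x'}u$ and $\fraclaplacian$ is homogeneous of degree $2\s$, one has
\begin{equation*}
d_\s\,\partial_{\nu^a}w \;=\; \fraclaplacian\bigl(x'\cdot\nabla_{x'}u\bigr) \;=\; x'\cdot\nabla_{x'}\bigl(\fraclaplacian u\bigr) + 2\s\,\fraclaplacian u \;=\; f'(u)\,w|_{y=0} + 2\s\,f(u),
\end{equation*}
not $f'(u)\,w|_{y=0}$ alone. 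When you plug $\xi = \mathbf{c}\eta$ into stability and integrate by parts using this, the $\int f'(u)\mathbf{c}^2\eta^2$ contributions on both sides do cancel, but what survives is $-2\s\int_{B_1}\frac{f(u)}{d_\s}\,\mathbf{c}\,\eta^2\,\d x'$, which is neither lower order nor localized to the cut-off region: it still contains $f$ and has the same scaling as the other terms. The essential further step, absent from your sketch, is to push this boundary integral into the bulk using $-\lim_{y\downarrow 0}y^a v_y = f(u)/d_\s$, and then apply a Pohozaev-type integration by parts based on $\nabla v\cdot\nabla\mathbf{c} = |\nabla v|^2 + \tfrac12\,x\cdot\nabla|\nabla v|^2$ and $\div(y^a x) = (n+2-2\s)\,y^a$. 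Only after this rewriting does the $f$-dependence genuinely disappear, and the coefficient in front of $\int y^a r^{2\s-n}v_r^2\zeta^2$ becomes $\alpha(2\s-\alpha/4)$ with $\alpha = n-2\s$, which is positive precisely for $n\in(2\s,10\s)$. Without the Pohozaev step you are stuck with an uncontrolled $f(u)$ term and the argument does not close.
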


This result will follow from making a particular choice of the test function in the stability condition, once this condition is written in the extended space $\R_+^{n+1}$.
Namely, definition \eqref{Eq:Stability} is equivalent to requiring
\begin{equation}
	\label{Eq:StabilityExtension}
	\int_\Omega f'(u) \xi^2 \dx' \leq d_\s\int_{\R_+^{n+1}} y^a |\nabla \xi |^2 \d x
\end{equation}
for every $\xi \in H^1( \R^{n+1}_+, y^a)$ whose trace in $\R^n$ has compact support in $\Omega$. 
This follows easily from the fact that the constant $d_s$ in \eqref{Eq:StabilityExtension}, which also appears in the previous relation~\eqref{Eq:DirichletToNeumannRelation}, is the optimal constant in the trace inequality $\seminorm{\xi(\cdot, 0)}_{H^\s (\R^n)}^2 \leq d_\s \seminorm{\xi}_{H^1 (\R^{n+1}_+, y^a)}^2$; see \cite[Section~5]{FranKLenzmannSilvestre}.

The following lemma is the first step towards \Cref{Prop:EstimatevrAnnulus}.
It is proved by taking $\xi = (x \cdot \nabla v )\eta = r v_r \eta$ in the stability condition \eqref{Eq:StabilityExtension}. 
The crucial point here is that, after this particular choice, the nonlinearity $f$ no longer appears in the inequality.

\begin{lemma}
	\label{Lemma:Stabilityc=rv_r}
	Let $n\geq 1$, $\s \in (0,1)$, and let $u\in C^2(B_1)\cap L^1_\s(\R^n)$ be a stable solution to $\fraclaplacian u = f(u)$ in $B_1\subset \R^n$, where $f$ is a  $C^{1,\gamma}$ function for some $\gamma >0$.
	Let $v$ be the $\s$-harmonic extension of $u$. 
	
	Then, for every $\eta \in \Lip(\overline{\R^{n+1}_+})$ with compact support in $\B_1^+ \cup B_1$ it holds
	\begin{equation}
		\label{Eq:Stabilityc=rv_r}
		\begin{split}
			\s \int_{\B_1^+} y^a & \left \{ (n - 2\s)\eta^2 + r(\eta^2)_{r} \right \} |\nabla v|^2 \dx 
			- 2\s  \int_{\B_1^+} y^a r v_{r} \nabla v \cdot \nabla (\eta^2) \d x \\
			&\quad \leq \int_{\B_1^+} y^a r^2 v_{r}^2 |\nabla \eta|^2 \d x .
		\end{split}
	\end{equation}
\end{lemma}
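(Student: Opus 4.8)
\emph{Proof plan.} The plan is to test the extended stability inequality \eqref{Eq:StabilityExtension} with
\[
\xi = (x\cdot\nabla v)\,\eta = r\,v_r\,\eta ,
\]
and to check that, after this choice, the nonlinearity essentially drops out. Write $w:=x\cdot\nabla v = r\,v_r$. Since the equation $\div(y^a\nabla v)=0$ is invariant under the dilations $v\mapsto v(\lambda\,\cdot)$, the function $w=\partial_\lambda\big|_{\lambda=1}v(\lambda\,\cdot)$ also solves $\div(y^a\nabla w)=0$ in $\R^{n+1}_+$; its trace on $\{y=0\}$ is $x'\cdot\nabla_{x'}u$, and differentiating the boundary relation $-\lim_{y\downarrow 0}y^a v_y = d_\s^{-1}f(u)$ in $x'$ and using the extension equation in the form $y^a\Delta_{x'}v+\partial_y(y^a v_y)=0$ one computes $-\lim_{y\downarrow 0} y^a w_y = d_\s^{-1}\!\left(f'(u)\,x'\!\cdot\nabla_{x'}u + 2\s\, f(u)\right)$. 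Testing the weak form of $\div(y^a\nabla w)=0$ against $w\eta^2$ yields an identity for $\int_{B_1}f'(u)(x'\!\cdot\nabla_{x'}u)^2\eta^2$; comparing it with \eqref{Eq:StabilityExtension} applied to $\xi=w\eta$, the terms $y^a\eta^2|\nabla w|^2$ and $y^a w\,\nabla(\eta^2)\cdot\nabla w$ cancel, leaving
\[
-2\s\int_{B_1}f(u)\,(x'\!\cdot\nabla_{x'}u)\,\eta^2\,\dx' \;\le\; d_\s\int_{\B_1^+} y^a\, r^2 v_r^2\,|\nabla\eta|^2\,\d x .
\]

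It then remains to rewrite the boundary integral on the left as a bulk integral matching the left-hand side of \eqref{Eq:Stabilityc=rv_r}. Using $f(u)=d_\s\,\partial_{\nu^a}v$ and the weak form of the extension problem for $v$ tested against $w\eta^2$, together with the pointwise identity $\nabla v\cdot\nabla w = |\nabla v|^2 + \tfrac{r}{2}\,\partial_r\!\big(|\nabla v|^2\big)$, one obtains
\[
\int_{B_1}f(u)(x'\!\cdot\nabla_{x'}u)\,\eta^2\,\dx' = d_\s\int_{\B_1^+} y^a\Big( \eta^2|\nabla v|^2 + \tfrac{r}{2}\,\eta^2\,\partial_r(|\nabla v|^2) + r\,v_r\,\nabla v\cdot\nabla(\eta^2)\Big)\d x .
\]
A single radial integration by parts — writing $r\partial_r = x\cdot\nabla$ and using $\div(y^a\eta^2 x)=y^a\big((n+2-2\s)\eta^2 + r(\eta^2)_r\big)$, with the $\{y=0\}$ boundary term vanishing thanks to the factor $y^{a+1}=y^{2-2\s}>0$ — converts $\int y^a\tfrac r2\eta^2\partial_r(|\nabla v|^2)$ into $-\tfrac12\int y^a\big((n+2-2\s)\eta^2+r(\eta^2)_r\big)|\nabla v|^2$. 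Substituting and combining the $\eta^2|\nabla v|^2$ terms gives
\[
-2\s\int_{B_1}f(u)(x'\!\cdot\nabla_{x'}u)\,\eta^2\,\dx' = \s\, d_\s\!\int_{\B_1^+}\! y^a\big((n-2\s)\eta^2+r(\eta^2)_r\big)|\nabla v|^2 \,\d x - 2\s\, d_\s\!\int_{\B_1^+}\! y^a\, r\,v_r\,\nabla v\cdot\nabla(\eta^2)\,\d x .
\]
Inserting this into the inequality displayed above and dividing by $d_\s>0$ gives exactly \eqref{Eq:Stabilityc=rv_r}.

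The main point requiring care is the admissibility of the test functions and the vanishing of all boundary contributions. Since $u\in C^2(B_1)$, interior and boundary regularity of the $\s$-harmonic extension give that $v$ is smooth in $\R^{n+1}_+$, that $y^a v_y$ extends continuously up to $B_1\times\{0\}$ with $C^1$ dependence on $x'$, and that $y^{a+1}|\nabla v|^2\to 0$ as $y\downarrow 0$ locally; from these facts one checks that $w\eta$ and $w\eta^2$ belong to $H^1(\R^{n+1}_+,y^a)$, have traces with compact support in $B_1$, and that all the integrations by parts carry no boundary term. The limit computation for $\partial_{\nu^a}w$ uses only this regularity. It is convenient to prove \eqref{Eq:Stabilityc=rv_r} first for $\eta\in C^\infty_c(\overline{\R^{n+1}_+})$ and then obtain the Lipschitz case by density. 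Note that neither the sign nor the convexity of $f$ is used in this lemma; those will be needed only later.
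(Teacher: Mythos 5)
Your proof is correct and follows essentially the same route as the paper's: the same test function $\xi=(x\cdot\nabla v)\eta$, the same two equations for $w=x\cdot\nabla v$ (interior $y^a$-harmonicity and the boundary Neumann relation $d_\s\partial_{\nu^a}w = f'(u)\,x'\cdot\nabla_{x'}u + 2\s f(u)$), the same cancellation yielding $-2\s\int_{B_1}f(u)w\eta^2\,\dx'\le d_\s\int y^a w^2|\nabla\eta|^2\,\dx$, and the same Pohozaev-type rewriting via $\nabla v\cdot\nabla w=|\nabla v|^2+\tfrac12 x\cdot\nabla|\nabla v|^2$ followed by one radial integration by parts. The only cosmetic difference is that you observe $\div(y^a\nabla w)=0$ via dilation invariance rather than by direct computation, and you organize the cancellation as ``compare two weak forms'' rather than ``substitute the integrated identity into stability''; these are the same algebra.

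One small imprecision: when you say the $\{y=0\}$ boundary term vanishes ``thanks to the factor $y^{a+1}=y^{2-2\s}>0$'', positivity alone is not the point. The boundary contribution is $\int y^{a+1}|\nabla v|^2\eta^2$, and the singular piece $y^{a+1}v_y^2$ needs the bound $y^{a+1}v_y^2=y^{1-a}(y^a v_y)^2=y^{2\s}(y^a v_y)^2\to 0$ using the continuity of $y^a v_y$ up to $\{y=0\}$, while $y^{a+1}|\nabla_{x'}v|^2\le C\,y^{2-2\s}\to 0$ uses boundedness of $\nabla_{x'}v$. You clearly have the right regularity facts in mind (you list exactly what is needed in your final paragraph), so this is a matter of phrasing rather than a gap.
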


\begin{proof}
	Throughout the proof we will use the notation $v_i = \partial_i v = \partial_{x_i} v$, $v_y = v_{n+1}$, and $x_{n+1} = y$.

	The key idea is to take  $\xi = \textbf{c} \eta$ in the stability condition \eqref{Eq:StabilityExtension}, with $\textbf{c} = x \cdot \nabla v= r v_r $, and use that $\textbf{c}$ satisfies an appropriate equation for the linearized operator $d_\s \partial_{\nu^a} - f'(u)$ (see \eqref{Eq:EquationsForC} below).
	To carry out this program we need to prove some regularity for $\textbf{c}$.
	In what follows, we will use that, for every $R<1$, the functions $v$, $y^a \partial_y v$, $\nabla_{x'}v$, and $y^a \partial_y \nabla_{x'} v$ are continuous in $\overline{\B_R^+}$, that  $D^2_{x'}v$ is bounded in $\B_R^+$, and that $v\in H^1(\B_R^+, y^a)$.
	All these statements are proved in \Cref{Lemma:RegularityHorizontalGrad}.

	Regarding $\textbf{c}$, we next show two properties of this function.
	First, note that $\textbf{c}$ is continuous up to $\{y=0\}$ and $\textbf{c}(\cdot, 0) = x'\cdot \nabla_{x'}u$ in $B_R \subset \{y=0\}$ for $R<1$.
	Indeed,  $\textbf{c} = x'\cdot \nabla_{x'} v + y v_y$ and, since $\nabla_{x'}v$ and $y^a v_y$ are continuous up to $\{y = 0\}$ in $\B^+_R$,  the claim follows using that $|y v_y| = |y^a v_y y^{1-a}| \leq C y^{2\s} \to 0$ as $y\downarrow 0$.

	Second, we show that $\textbf{c} \in H^1 (\B^+_R, y^a)$ for all $R<1$, and as a consequence $\xi = \textbf{c}  \eta\in H^1 (\B^+_R, y^a)$ and thus it is an admissible test function for the stability condition \eqref{Eq:StabilityExtension} ---since $\eta$ is $C^1$ and has compact support in $\B^+_1 \cup B_1$.
	To see that $\textbf{c} \in H^1 (\B^+_R, y^a)$, recall that $v \in H^1 (\B^+_R, y^a)$ for $R<1$, and thus $\textbf{c} \in L^2 (\B^+_R, y^a)$.
	Hence, we only need to check that $\nabla \textbf{c} \in L^2 (\B^+_R, y^a)$.
	For $i= 1,\ldots,n$, we have
	\begin{equation}
		\textbf{c}_{x_i} = v_{x_i} + x'\cdot \nabla_{x'} v_{x_i} + y v_{x_i y}.
	\end{equation} 
	Clearly the first two terms belong to $L^2 (\B^+_R, y^a)$ since $\nabla_{x'}v$ and $D^2_{x'} v$ are bounded in $\B^+_R$  and $y^a$ is integrable near $0$.
	For the last term we use that $y^a v_{x_i y}$ is continuous up to $\{y=0\}$ to obtain that $|v_{x_i y}| \leq C y^{-a}$ for some constant $C$, and thus $ y^a |y v_{x_i y}|^2 \leq C y^{2-a} = C y^{1 + 2\s}$, which yields the desired integrability.
	Finally, for the vertical derivative of $\textbf{c}$ we use the equation $\Delta v + a v_y / y = 0$ to obtain
	\begin{equation}
		\label{Eq:FluxcFory>0}
		\textbf{c}_{y} =  x'\cdot \nabla_{x'} v_{y}  + v_{y} + y v_{y y} =   x'\cdot \nabla_{x'} v_{y} + (1- a) v_{y} - y \Delta_{x'} v,
	\end{equation} 
	and using similar arguments as the previous ones, we see that all the terms belong to $L^2 (\B^+_R, y^a)$.
	
	Note also that, since $y^a v_y$, $y^a \nabla_{x'} v_{y}$, and $y^{2 - 2\s }\Delta_{x'} v$ are continuous up to $\{y=0\}$ (in $\B^+_R$), \eqref{Eq:FluxcFory>0} leads to the flux $y^a \textbf{c}_y$ being continuous and well defined up to $B_R \subset \{ y = 0\}$.

	We can now take $\xi = \textbf{c}  \eta$ in the stability condition \eqref{Eq:StabilityExtension} to get
	\begin{equation}
		\label{Eq:StabilityConditionCEtaWeak}
		\int_{B_1} f'(u) \textbf{c}^2  \eta^2 \dx' \leq d_\s\int_{\B^+_1} y^a \nabla \textbf{c} \cdot \nabla (\textbf{c} \eta^2) \d x + d_\s\int_{\B^+_1} y^a \textbf{c}^2  |\nabla \eta |^2 \d x.
	\end{equation}
	We claim that
	\begin{equation}
		\label{Eq:IntegrationByPartsC}
		d_\s \int_{\B^+_1} y^a \nabla \textbf{c} \cdot \nabla (\textbf{c} \eta^2) \d x = \int_{B_1} \left( f'(u)\textbf{c} + 2\s f(u) \right) \textbf{c} \eta^2 \d x'.
	\end{equation}
	To see this, we first show that
	\begin{equation}
		\label{Eq:EquationsForC}
		\div(y^a \nabla \textbf{c}) = 0 \,  \textrm{ in } \R_+^{n+1} \quad \textrm{ and } \quad d_\s \dfrac{\partial \textbf{c}}{\partial \nu^a} - f'(u)\textbf{c} =   2\s f(u) \, \textrm{ in } B_1 .
	\end{equation}
	The first identity is obtained after a simple computation using that  $\Delta v + a v_y / y = 0$:
	\begin{align}
		y^{-a} \div(y^a \nabla \textbf{c})
		&= \Delta (x \cdot \nabla v) + \dfrac{a}{y}( x\cdot \nabla v)_y\\
		&= x \cdot \nabla \left(-a \dfrac{v_y}{y}\right) + 2 \Delta v + \dfrac{a}{y} (v_y + x\cdot \nabla v_y)\\
		&= -a  x \cdot \nabla \left( \dfrac{v_y}{y}\right) +  \Delta v + \dfrac{a}{y}  x\cdot \nabla v_y = a y \dfrac{v_y}{y^2} + \Delta v = 0.
	\end{align}
	To show the second one, recall that from \eqref{Eq:FluxcFory>0}, for $y>0$ we have
	\begin{equation}
		-y^a \textbf{c}_y = x' \cdot (- y^a \partial_y \nabla_{x'}v) + 2\s (-y^a v_y) + y^{2 - 2\s} \Delta_{x'} v.
	\end{equation}
	Now, the desired identity follows after taking the limit $y\downarrow 0$ in the above expression, by using \Cref{Lemma:RegularityHorizontalGrad}~($b$), that $d_\s \partial_{\nu^a} v = f(u)$ in $B_1$, and that $\Delta_{x'} v$ is bounded ---and thus $|y^{2 - 2\s} \Delta_{x'} v|\leq C y^{2 - 2\s} \to 0$ as $y\downarrow 0$.
	We have established \eqref{Eq:EquationsForC}.
	
	To prove \eqref{Eq:IntegrationByPartsC}, we integrate by parts in $\B^+_R \cap \{ y > \delta \}$ (a set where $\textbf{c}$ is smooth) and use the first identity in \eqref{Eq:EquationsForC} to obtain
	\begin{equation}
		\label{Eq:IntegrationByPartsCProof}
		d_\s \int_{\B^+_R \cap \{ y > \delta \} } y^a \nabla \textbf{c} \cdot \nabla  (\textbf{c} \eta^2) \d x = - d_\s \int_{B_R} \delta^a \textbf{c}_y(x',\delta)   \textbf{c}  (x',\delta) \eta^2 (x',\delta) \d x'.
	\end{equation}
	Taking the limit $\delta \to 0$ and using that $ \textbf{c}$ and $y^a \textbf{c}_y$ are continuous up to $\{y=0\}$ (as proved before), from \eqref{Eq:EquationsForC} we conclude \eqref{Eq:IntegrationByPartsC}.

	Combining \eqref{Eq:IntegrationByPartsC} with \eqref{Eq:StabilityConditionCEtaWeak} we get
	$$
	-2\s \int_{B_1} \dfrac{f(u)}{d_\s}\textbf{c} \eta^2\d x' \leq \int_{\B_1^+} y^a \textbf{c}^2  |\nabla \eta |^2  \d x.
	$$
	Next, we rewrite the left-hand side of this inequality after an integration by parts (now using that $d_\s \partial_{\nu^a} v = f(u)$ in $B_1$ and that $v$ is $\s$-harmonic), obtaining
	\begin{equation}
		\label{Eq:Stabilityc=rv_rProof1}
		-2\s  \int_{\B_1^+} y^a (\nabla v \cdot \nabla \textbf{c} )\eta^2 \dx -2\s \int_{\B_1^+} y^a \textbf{c} \nabla v \cdot \nabla  (\eta^2)  \dx \leq   \int_{\B_1^+} y^a \textbf{c}^2  |\nabla \eta |^2  \d x .
	\end{equation}
	
	We treat the first integral in this inequality by using the same idea as in the proof of the Pohozaev identity, to get rid of second order derivatives of $v$. 
	Note first that 
	\begin{equation}
		\label{Eq:GradvGradc}
		\nabla v \cdot \nabla \textbf{c} = \sum_{j=1}^{n+1} v_j \partial_j  \sum_{i=1}^{n+1} x_i v_i  = \sum_{j=1}^{n+1} v_j^2 + \sum_{j=1}^{n+1} \sum_{i=1}^{n+1} x_i v_j  v_{ij}=  |\nabla v|^2 + x \cdot  \dfrac{\nabla|\nabla v|^2}{2}. 
	\end{equation}
	We now claim that
	\begin{equation}
		\label{Eq:IntegrationByPartsPohozaev}
		- \int_{\B_1^+ } y^a \left(x \cdot \nabla|\nabla v|^2\right) \eta^2 \d x =   (n + 2 - 2\s) \int_{\B_1^+ }y^a |\nabla v|^2 \eta^2 \d x  +  \int_{\B_1^+ } y^a  |\nabla v|^2 r (\eta^2)_r \d x.
	\end{equation}
	To see this, we proceed as before (integrating by parts on $\B^+_1 \cap \{ y > \delta \}$, where all the functions are smooth) to obtain
	\begin{equation}
		\begin{split}
			- \int_{\B_1^+ \cap \{ y > \delta \} } y^a \left(x \cdot \nabla|\nabla v|^2\right) \eta^2 \d x &= \int_{\B_1^+ \cap \{ y > \delta \}} \div(y^a x) |\nabla v|^2 \eta^2 \d x \\
			& \quad \quad  +   \int_{\B_1^+ \cap \{ y > \delta \} } y^a  |\nabla v|^2 r (\eta^2)_r \d x \\
			& \quad \quad + \int_{ B_1}  \delta^{a + 1} \big(|\nabla_{x'} v(x', \delta)|^2+ |v_y(x', \delta)|^2 \big) \eta^2(x', \delta) \d x'.
		\end{split}
	\end{equation}
	Now, using that $\nabla_{x'} v$ and $y^a v_y$ are continuous up to $\{y=0\}$ (locally in $B_1$, but recall that $\eta$ has compact support in $B_1$), we have
	$$
	\delta^{a + 1} \big(|\nabla_{x'} v(x', \delta)|^2+ |v_y(x', \delta)|^2 \big) \leq C \delta^{2 - 2\s} + C'\delta^{1-a}|\delta^a v_y (x', \delta)|^2 \leq C(\delta^{2 - 2\s} + \delta^{2\s}),
	$$
	and thus \eqref{Eq:IntegrationByPartsPohozaev} is obtained after letting $\delta\to 0$ and using that $\div(y^a x) = (n + 1 + a)y^a = (n + 2 - 2\s)y^a$.

	From \eqref{Eq:GradvGradc} and \eqref{Eq:IntegrationByPartsPohozaev} we get
	\begin{equation}
		\begin{split}
			- \int_{\B_1^+ } y^a (\nabla v \cdot \nabla \textbf{c})  \eta^2 \dx 
			&= - \int_{\B_1^+ } y^a |\nabla v|^2 \eta^2 \d x  - \dfrac{1}{2}\int_{\B_1^+ } y^a \left(x \cdot \nabla|\nabla v|^2\right) \eta^2 \d x  \\
			&= \dfrac{n-2\s}{2} \int_{\B_1^+ } y^a |\nabla v|^2 \eta^2 \d x +\dfrac{1}{2} \int_{\B_1^+ } y^a  |\nabla v|^2 r (\eta^2)_r \d x.
		\end{split}
	\end{equation}
	Combining this with \eqref{Eq:Stabilityc=rv_rProof1} we conclude the proof. 	
\end{proof}

\begin{remark}
	The second identity in \eqref{Eq:EquationsForC} can be established directly ``downstairs'', that is, without using the extension. 
	Indeed, since $\textbf{c}$ is the $\s$-harmonic extension of $ x' \cdot \nabla_{x'} u$, it holds $ d_\s \partial_{\nu^a} \textbf{c} = \fraclaplacian (x' \cdot \nabla_{x'} u)$. 
	Then, the second expression in \eqref{Eq:EquationsForC} follows from the identity
	$$
	\fraclaplacian (x' \cdot \nabla_{x'} w) = x' \cdot \nabla_{x'} \{ \fraclaplacian w \} + 2\s \fraclaplacian w,
	$$
	(which holds for all functions $w$) by taking $w=u$.
	The identity can be proved easily using the definition of the fractional Laplacian, or using that $x' \cdot \nabla_{x'} w (x') = \frac{\d}{\d h}|_{h=1} w(hx')$ and the scaling properties of $\fraclaplacian$.
\end{remark}

From the inequality of \Cref{Lemma:Stabilityc=rv_r} and choosing $\eta = r^{(2\s-n)/2} \zeta$ (properly regularized near the origin), with $\zeta$ being a cut-off function, we can establish \Cref{Prop:EstimatevrAnnulus}.

\begin{proof}[Proof of \Cref{Prop:EstimatevrAnnulus}]
	For $\varepsilon \in (0,1/2)$, we take the Lipschitz function
	$$
	\eta = \begin{cases}
		\varepsilon^{-\alpha/2} & \text{ in } \B_\varepsilon^+,\\
		r^{-\alpha/2} \zeta & \text{ in }  \B_1^+  \setminus \B_\varepsilon^+,
	\end{cases}
	$$
	where $0\leq \alpha \leq n-2\s$ is to be chosen later and $\zeta = \zeta(r)$ is a cut-off function satisfying $\zeta = 1$ for $r\leq 1/2$ and $\zeta = 0$ for $r\geq 3/4$.
	We use this choice of $\eta$ in \eqref{Eq:Stabilityc=rv_r}.  
	To let $\varepsilon \to 0$ in the resulting inequality, we use the dominated convergence theorem taking into account that $y^a |\nabla v|^2 = y^a (|\nabla_{x'} v|^2+ v_y^2)\leq C(y^{1 - 2\s} + y^{2\s - 1})$ in $\B^+_{3/4}$ ---by \Cref{Lemma:RegularityHorizontalGrad}~($b$)--- and that $-\alpha \geq 2\s - n$.
	We conclude that $I_1 + I_2 \leq I_3$, where
	$$ 
	I_1 := \s \int_{\B_1^+} y^a \left \{ (n - 2\s)r^{-\alpha}\zeta^2 + r (r^{-\alpha}\zeta^2)_r \right \} |\nabla v|^2 \dx,
	$$
	$$ 
	I_2 := -2 \s\int_{\B_1^+} y^a r v_r \nabla v \cdot \nabla (r^{-\alpha}\zeta^2 ) \d x, 
	$$
	and
	$$
	I_3 := \int_{\B_1^+} y^a r^2 v_r^2 |\nabla (r^{-\alpha/2}\zeta )|^2 \d x.
	$$
	
	Now, we compute each of the previous integrals separately, using that $\zeta \equiv 1$ in $\B_{1/2}^+$ ---and thus $\nabla \zeta \equiv 0$ in $\B_{1/2}^+$. First, note that 
	\begin{align}
		I_1 &= \s(n - 2\s - \alpha)\int_{\B_{1/2}^+} y^a r^{-\alpha}|\nabla v|^2 \d x \\
		& \quad \quad+  \s\int_{\B_{3/4}^+ \setminus \B_{1/2}^+}  y^a r^{-\alpha}|\nabla v|^2 \big ((n - 2\s - \alpha) \zeta^2 + r (\zeta^2)_r \big )\d x.
	\end{align}
	For $I_2$ we have
	\begin{align}
		I_2 &= 2\alpha \s \int_{\B_{1/2}^+} y^a r^{-\alpha} v_r^2  \d x + 2\s \int_{\B_{3/4}^+ \setminus \B_{1/2}^+} y^a r^{-\alpha} \big ( \alpha  v_r^2 \zeta^2 - r v_r \nabla v \cdot \nabla (\zeta^2)\big )\d x.
	\end{align}
	Last, since $	|\nabla (r^{-\alpha/2}\zeta )|^2 = (\alpha^2/4) r^{- \alpha - 2} \zeta^2 + r^{-\alpha} |\nabla \zeta|^2 - \alpha r^{-\alpha - 1} \zeta \zeta_r$, we get
	\begin{align}
		I_3 &= \dfrac{\alpha^2}{4} \int_{\B_{1/2}^+} y^a r^{- \alpha} v_r^2 \d x + \int_{\B_{3/4}^+ \setminus \B_{1/2}^+} y^a r^{-\alpha} v_r^2 \left ( \dfrac{\alpha^2}{4} \zeta^2 +  r^2|\nabla \zeta|^2 - \alpha r \zeta \zeta_r \right )\d x.
	\end{align}
	
	Next, we estimate the three previous integrals  over $\B_{3/4}^+ \setminus \B_{1/2}^+$, noticing that the integrands can be bounded by $C y^a |\nabla v|^2$, with $C$ depending only on $n$, $\s$, and $\alpha$.
	Hence, from the inequality $I_1 + I_2 \leq I_3$  we get
	$$
	\s (n - 2\s - \alpha)\int_{\B_{1/2}^+} y^a r^{-\alpha}|\nabla v|^2 \d x 
	+\alpha \bpar{2\s - \dfrac{\alpha}{4}} \int_{\B_{1/2}^+} y^a r^{-\alpha} v_r^2  \d x  \leq C \int_{\B_{3/4}^+ \setminus \B_{1/2}^+} y^a |\nabla v|^2\d x.	 
	$$
	
	Note that $\alpha (2\s - \alpha/4) > 0$ whenever $\alpha \in (0, 8\s)$. 
	Thus, if we take $\alpha = n - 2\s$, since the proposition assumes $n\in (2\s, 10\s)$ we finally obtain the desired estimate
	$$
	\int_{\B_{1/2}^+} y^a r^{2\s - n} v_r^2 \d x \leq C \int_{\B_{3/4}^+ \setminus \B_{1/2}^+} y^a |\nabla v|^2  \d x
	$$
	with a constant $C$ depending only on $n$ and $\s$.
\end{proof}

\section{A new geometric form of the stability condition}
\label{Sec:GeomStability}

In this section we establish \Cref{Th:GeomStabilityFullGradient_s=1/2}, which gives a new geometric form of the stability condition.
As a corollary, we will obtain an $L^2$ estimate for the full Hessian of the harmonic extension $v$ of a stable solution $u$.
This last result, stated in \Cref{Coro:W12Gradient} below, will be crucial in the following sections.

\begin{proof}[Proof of \Cref{Th:GeomStabilityFullGradient_s=1/2}]
	We take $\xi = \textbf{c} \eta$ in the stability condition \eqref{Eq:StabilityExtensionHalfLaplacian}, where $\eta$ is a Lipschitz function with compact support in $\Omega \times [0,+\infty)$ (as in the statement of the theorem) and $\textbf{c}$ is a function (to be chosen later) in $ C^1_\loc(\Omega\times [0,+\infty))$,  smooth in $\R^{n+1}_+$, and such that  $\textbf{c}\,\Delta \textbf{c} \in C_\loc(\Omega\times [0,+\infty))$.
	Notice that, as a consequence, $\xi \in H^1(\Omega\times [0,+\infty))$.
	We obtain
	\begin{equation}
		\label{Eq:StabilityConditionCEtaWeakHalf}
		\int_{\Omega} f'(u) \textbf{c}^2  \eta^2 \dx' \leq \int_{\R^{n+1}_+} \nabla \textbf{c} \cdot \nabla (\textbf{c} \eta^2) \d x + \int_{\R^{n+1}_+} \textbf{c}^2  |\nabla \eta |^2 \d x.
	\end{equation}
	We now integrate by parts the first term on the right-hand side.
	For the integration by parts to be fully justified, we perform it in $\{y > \delta\}$ (where $\textbf{c}$ is smooth) and then we let $\delta \to 0$, using dominated convergence and the continuity of $\textbf{c}\,\textbf{c}_y$ and $\textbf{c}\,\Delta \textbf{c}$.
	We get
	\begin{equation}
		\label{Eq:StabilityConditionCEta_s=1/2}
		\int_{\Omega} \left (\textbf{c}\,\textbf{c}_y +  f'(u) \textbf{c}^2 \right )  \eta^2 \d x' 
		\leq \int_{\R^{n+1}_+} ( \textbf{c}^2  |\nabla \eta |^2  - \textbf{c} \, \Delta \textbf{c} \, \eta^2 ) \d x .
	\end{equation}

	We wish to take $\textbf{c}$ to be 
	$ |\nabla v|$
	in the previous inequality.
	Nevertheless, since this function may not be $C^1$ (if $\nabla v$ vanishes somewhere), we consider instead the smooth function
	$$
	\textbf{c}_\varepsilon := (|\nabla v|^2  + \varepsilon^2)^{1/2},
	$$
	for $\varepsilon > 0$,
	which is locally $C^1$ in $\Omega \times [0,+\infty)$ since $v \in C^2_\loc (\Omega \times [0,+\infty))$ by  \Cref{Lemma:RegularityHorizontalGrad}~($c$).
	Moreover, an easy computation using that $v$ is harmonic shows that
	\begin{equation}
		\label{Eq:cLaplacec}
		\textbf{c}_\varepsilon \, \Delta \textbf{c}_\varepsilon = \sum_{i,j=1}^{n+1}  v_{ij}^2 -  \sum_{j=1}^{n+1}\left( \sum_{i=1}^{n+1} v_{ij} \dfrac{v_i }{\textbf{c}_\varepsilon}  \right)^2 
		\geq \acal^2 \quad \text{ in } \R^{n+1}_+.
	\end{equation}	
	In particular, $\textbf{c}_\varepsilon \, \Delta \textbf{c}_\varepsilon  \in C_\loc (\Omega\times [0,+\infty))$ ---since $v\in C^2_\loc (\Omega\times [0,+\infty))$.
	As a consequence, we can choose $\textbf{c}$ in \eqref{Eq:StabilityConditionCEta_s=1/2} to be $\textbf{c}_\varepsilon$ and conclude, taking into account  \eqref{Eq:cLaplacec},
	\begin{equation}
		\label{Eq:StabilityFullGradProof}
		\int_{\R^{n+1}_+}  \acal^2 \eta^2  \d x + \int_{\Omega} \left (\textbf{c}_\varepsilon\, \partial_y \textbf{c}_\varepsilon + f'(u) \textbf{c}_\varepsilon^2 \right ) \eta^2 \d x' 
		\leq \int_{\R^{n+1}_+}  \textbf{c}_\varepsilon^2  |\nabla \eta |^2  \d x .
	\end{equation}
	
	Next, we claim that 
	\begin{equation}
		\label{Eq:GeomStabProof1LHS}
		\textbf{c}_\varepsilon\, \partial_y \textbf{c}_\varepsilon + f'(u) \textbf{c}_\varepsilon^2  \geq \varepsilon^2 f'(u) \quad \text{ in } \Omega \subset \{y= 0\}.
	\end{equation}
	Once this is proved, the result will follow by letting $\varepsilon\to 0$.
	
	To establish \eqref{Eq:GeomStabProof1LHS}, we first compute, for $y>0$,
	\begin{equation}
		\label{Eq:GeomStabProof2LHS}
		\textbf{c}_\varepsilon\, \partial_y \textbf{c}_\varepsilon = \sum_{i=1}^{n+1} v_i \, v_{i y} = \nabla_{x'} v \cdot \nabla_{x'} v_y + v_y \, v_{yy}.
	\end{equation}
	Now, on the one hand note that  $-\partial_y \nabla_{x'}v = f'(u) \nabla_{x'}u$ in $\Omega $ by \Cref{Lemma:RegularityHorizontalGrad}~($b$). 
	Multiplying this identity by $\nabla_{x'}v$ we get
	$$
	\nabla_{x'} v \cdot \nabla_{x'} v_y = -f'(u)|\nabla_{x'} u|^2 \quad \text{ in } \Omega.
	$$
	On the other hand, since $v_{yy} = - \Delta_{x'}v$ and $\Delta_{x'}v$ is continuous up to $\{y= 0\}$, we see that
	$$
	\lim_{y\downarrow 0} v_{yy} = - \Delta_{x'}u = (-\Delta_{x'})^{1/2}  (-\Delta_{x'})^{1/2} u =  (-\Delta_{x'})^{1/2} f(u)\quad \text{ in } \Omega. 
	$$
	Using the previous identities together with $-v_y = f(u)$ in $\Omega$,  \eqref{Eq:GeomStabProof2LHS} leads to
	$$
	\textbf{c}_\varepsilon\, \partial_y \textbf{c}_\varepsilon =  -f'(u)|\nabla_{x'} u|^2 -f(u) (-\Delta_{x'})^{1/2} f(u)\quad \text{ in } \Omega. 
	$$
	As a consequence, noticing that $f'(u) \textbf{c}_\varepsilon^2 = f'(u) |\nabla_{x'} u|^2 + f'(u) f(u)^2 + \varepsilon^2 f'(u)$ in $\Omega$, we have
	\begin{align}
		\textbf{c}_\varepsilon\, \partial_y \textbf{c}_\varepsilon + f'(u) \textbf{c}_\varepsilon^2 &=   -f(u) (-\Delta_{x'})^{1/2} f(u) + f'(u) f(u)^2  + \varepsilon^2 f'(u)\\
		& = f(u) \left \{ f'(u) (-\Delta_{x'})^{1/2} u - (-\Delta_{x'})^{1/2} f(u) \right\}  + \varepsilon^2 f'(u).
	\end{align}
	Finally, using that $f(t_1)-f(t_2) \leq f'(t_1) (t_1- t_2)$ for all $t_1,t_2 \in \R$ (since $f$ is convex), we see that $(-\Delta_{x'})^{1/2} f(u) \leq f'(u)( -\Delta_{x'})^{1/2} u$.
	Hence, since $f$ is nonnegative, the claim \eqref{Eq:GeomStabProof1LHS} follows.
\end{proof}

\begin{remark}
	\label{Remark:s=1/2}
	In the previous proof, we have used strongly that $\s=1/2$, both in the choice of the test function $\xi = |\nabla v |\eta$ and when using that $-\Delta = \halflaplacian \circ \halflaplacian$.
	It is not clear to us how to extend the previous arguments to other powers $\s \in (0,1)$.		
	On the one hand,  $|\nabla v|$ is not an appropriate test function anymore, at least when $\s <1/2$ (indeed, since $v_y$ behaves as $-y^{2\s - 1}f(v)$ near $\{y=0\}$, $|\nabla v|$ may be singular at almost all points in $\{y=0\}$).
	On the other hand, one would say that the analogue decomposition of $-\Delta$ for powers $\s\neq 1/2$ should be $-\Delta = \fraclaplacian \circ (-\Delta)^{1-\s}$, but its is not clear which information  on $(-\Delta)^{1-\s}$ is to be used.
\end{remark}

From \Cref{Th:GeomStabilityFullGradient_s=1/2} we can now obtain an $H^1$ estimate for the gradient of the harmonic extension of a stable solution.
The estimate is independent of the nonlinearity $f$, a fact that will be crucial in the following sections.
Note that the norms are now computed on the intersection of $\R^{n+1}_+$ with balls not necessarily centered at $\partial\R^{n+1}_+$, since this will be useful later on.

\begin{corollary}
	\label{Coro:W12Gradient}
	Let $n \geq 1$ and let $u\in C^2(B_1)\cap L^1_{1/2}(\R^n)$ be a stable solution to $(-\Delta)^{1/2}u = f(u)$ in $B_1 \subset \R^n$, with $f$ a nonnegative convex $C^{1,\gamma}$ function for some $\gamma >0$. 
	Let $v$ be the harmonic extension of $u$ in $\R^{n+1}_+$.

	Then, for every $0 < R_1 < R_2 \leq 1$ and every $x_0 \in \B_1$ such that $ \B_{R_2}(x_0) \subset \B_1$, 
	\begin{equation}
		\label{Eq:W12Gradient}
		\norm{\nabla v }_{L^2(\B_{R_1}(x_0)  \cap \{y>0\})} 
		+(R_2 - R_1)\norm{ D^2 v }_{L^2(\B_{R_1}(x_0)  \cap \{y>0\})} \leq C \norm{ \nabla v }_{L^2 (\B_{R_2}(x_0)  \cap \{y>0\})}
	\end{equation}
	for some positive dimensional constant $C$.
\end{corollary}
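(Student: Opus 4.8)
The plan is to deduce \eqref{Eq:W12Gradient} almost directly from the geometric stability inequality \eqref{Eq:GeomStabilityFullGradient_s=1/2} of \Cref{Th:GeomStabilityFullGradient_s=1/2}; the only ingredient beyond routine cut-off bookkeeping is a pointwise comparison between $\acal^2$ and the full Hessian $|D^2v|^2$ that exploits the harmonicity of $v$.

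First I would fix a Lipschitz cut-off $\eta$ on $\overline{\R^{n+1}_+}$ with $\eta\equiv1$ on $\B_{R_1}(x_0)$, $\eta\equiv0$ outside $\B_{(R_1+R_2)/2}(x_0)$, and $|\nabla\eta|\le C/(R_2-R_1)$. Since $(R_1+R_2)/2<R_2$ and $\B_{R_2}(x_0)\subset\B_1$, the trace of $\eta$ on $\{y=0\}$ has compact support in $B_1=\Omega$, so $\eta$ is an admissible test function in \eqref{Eq:GeomStabilityFullGradient_s=1/2} (\Cref{Th:GeomStabilityFullGradient_s=1/2} applies with this $\Omega$ because $u\in C^2(B_1)\cap L^1_{1/2}(\R^n)$ and $f$ is nonnegative convex $C^{1,\gamma}$). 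Plugging $\eta$ into \eqref{Eq:GeomStabilityFullGradient_s=1/2} gives
\[
\int_{\R^{n+1}_+}\acal^2\,\eta^2\d x\ \le\ \int_{\R^{n+1}_+}|\nabla v|^2\,|\nabla\eta|^2\d x\ \le\ \frac{C}{(R_2-R_1)^2}\int_{\B_{R_2}(x_0)\cap\{y>0\}}|\nabla v|^2\d x .
\]

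The substantive point is the a.e.\ pointwise bound $|D^2v|^2\le(n+1)\,\acal^2$ in $\R^{n+1}_+$. Where $|\nabla v|>0$ this is elementary linear algebra using $\Delta v=\tr(D^2v)=0$: decomposing $\R^{n+1}=\mathrm{span}(e)\oplus e^{\perp}$ with $e=\nabla v/|\nabla v|$ and denoting by $a\in\R$, $b\in\R^{n}$, and $C$ (symmetric $n\times n$) the corresponding blocks of $D^2v$, one has $\acal^2=|b|^2+|C|^2$ and $|D^2v|^2=a^2+|b|^2+\acal^2$, while the trace-free condition forces $a=-\tr C$, so $a^2=(\tr C)^2\le n|C|^2\le n\,\acal^2$ by Cauchy--Schwarz, whence $|D^2v|^2\le(n+1)\acal^2$. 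Where $\nabla v=0$ one has $D^2v=0$ a.e.\ (the Hessian of a $W^{2,1}$ function vanishes a.e.\ on the zero set of each component of its gradient), so the bound holds there too, since $\acal:=0$ by definition. I would also record that $D^2v$ is locally bounded in $B_1\times[0,+\infty)$ by \Cref{Lemma:RegularityHorizontalGrad}, so the left-hand side of \eqref{Eq:W12Gradient} is finite a priori and the above use of the stability inequality up to $\{y=0\}$ is legitimate.

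Combining the two displays gives $\int_{\B_{R_1}(x_0)\cap\{y>0\}}|D^2v|^2\d x\le(n+1)\int_{\R^{n+1}_+}\acal^2\eta^2\d x\le C(R_2-R_1)^{-2}\|\nabla v\|_{L^2(\B_{R_2}(x_0)\cap\{y>0\})}^2$, which is the Hessian term in \eqref{Eq:W12Gradient}; the first term is immediate from $\B_{R_1}(x_0)\subset\B_{R_2}(x_0)$. The only genuinely nontrivial step is the pointwise inequality $|D^2v|^2\le(n+1)\acal^2$: a priori $\acal^2$ controls only the part of $D^2v$ tangential to the level sets of $v$, and it is exactly the harmonicity of $v$ — together with the fact that \eqref{Eq:GeomStabilityFullGradient_s=1/2} already incorporates the $y$-derivatives of $v$, unlike the weaker horizontal inequality of Sire--Valdinoci — that upgrades this to control of the full Hessian. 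The cut-off choice, the admissibility check, and the a priori finiteness are routine.
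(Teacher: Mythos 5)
Your proof is correct and follows essentially the same route as the paper's: plug a cut-off $\eta$ into the geometric stability inequality \eqref{Eq:GeomStabilityFullGradient_s=1/2}, then use the pointwise bound $|D^2v|^2\le C\,\acal^2$ obtained by decomposing $D^2v$ in an orthonormal frame adapted to $\nabla v$ and invoking harmonicity to recover the $\nu\nu$ entry, together with the a.e.\ vanishing of $D^2v$ on $\{\nabla v=0\}$. Your block-matrix presentation and explicit constant $(n+1)$ are just a cleaner write-up of the paper's entry-by-entry argument, which only records a dimensional constant $C$.
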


\begin{proof}
	The bound for the $L^2$ norm of $\nabla v$ in $\B_{R_1}(x_0) \cap \{y>0\}$ is trivial; thus we only need to prove an $L^2$ estimate for $D^2v$.
	To carry this out, choose a cut-off function $\eta$ with compact support in $\B_{R_2}(x_0)$ such that $\eta \equiv 1$ in $\B_{R_1}(x_0)$ ---hence its gradient is supported in $\B_{R_2}(x_0) \setminus \B_{R_1}(x_0)$ and, in this set, $|\nabla \eta|\leq C/(R_2-R_1)$ for some dimensional constant $C$.
	Now, since $\B_{R_2}(x_0) \subset \B_1$, we can use  \Cref{Th:GeomStabilityFullGradient_s=1/2} with this choice of $\eta$ to obtain
	$$
	\int_{\B_{R_1}(x_0) \cap \{y>0\} } \acal^2 \d x \leq \dfrac{C}{(R_2-R_1)^2} \int_{(\B_{R_2}(x_0)\setminus \B_{R_1}(x_0))  \cap \{y>0\}} |\nabla v|^2 \d x.
	$$
	The corollary will follow once we show that the square of every element of $D^2 v$ can be controlled pointwise by $C\acal^2$ for some dimensional constant $C$.
	
	Let us prove this last assertion.
	At every point $x\in \R^{n+1}_+ \cap \{ \nabla v \neq 0\}$ we  consider the normal unitary vector to the level set of $v$ passing through $x$,
	$$
	\nu := \dfrac{\nabla v(x)}{|\nabla v(x)|},
	$$
	and any orthonormal basis $\{\tau_1, \ldots, \tau_n, \nu\}$ of $\R^{n+1}$.
	Expressing $D^2 v(x)$ in this basis, we see that
	$$
	\acal^2 (x) = \sum_{i,j=1}^n (D^2 v (x) [\tau_i, \tau_j])^2 + \sum_{i=1}^n (D^2 v (x) [\tau_i, \nu])^2;
	$$
	recall that $\acal^2$ is defined in \eqref{Eq:AcalDef}.
	Now,  by the symmetry of $D^2 v$ it is clear that $\acal^2(x)$ controls the square of every entry in $D^2 v (x) $  except for $D^2 v(x)[\nu, \nu]$.
	However, since $v$ is harmonic, we have that 
	$$
	D^2 v (x)[\nu, \nu] = -\sum_{i=1}^n D^2 v (x)[\tau_i, \tau_i],
	$$ 
	and thus we have the desired control at every point $x\in \R^{n+1}_+ \cap \{ \nabla v \neq 0\}$.
	
	Finally, by noticing that $D^2 v = 0$ a.e. in $\R^{n+1}_+ \cap \{ \nabla v = 0\}$ since $\nabla v$ is Lipschitz (see for instance~\cite[Theorem~6.19]{LiebLoss}), we conclude the proof.
\end{proof}

\section{Decay of the weighted radial derivative}
\label{Sec:GeometricDecayGradient}

In this section, we establish the geometric decay in $R$ of a weighted $L^2$ norm of $v_r$ in~$\B_R^+$, where $v$ is the harmonic extension of a stable solution.
The main ingredient is the following key lemma.
It establishes that, under a doubling assumption on $ |\nabla v|^2 \d x $, the full gradient of $v$ is controlled, in $L^2$, by the radial derivative of~$v$ in an annulus.
To prove this result we use, as in~\cite{CabreFigalliRosSerra-Dim9}, a compactness argument combined with the nonnegativeness of the nonlinearity~$f$ and the nonexistence of nonconstant 0-homogeneous superharmonic functions.

\begin{lemma}
	\label{Lemma:v_rControlsGradientInAnnulus}
	Let $n \geq 1$ and let $u\in C^2(B_2)\cap L^1_{1/2}(\R^n)$ be a stable solution to $(-\Delta)^{1/2}u = f(u)$ in $B_2 \subset \R^n$, with $f$ a nonnegative convex $C^{1,\gamma}$ function for some $\gamma >0$. 
	Let $v$ be the harmonic extension of $u$ in $\R^{n+1}_+$. 
	Assume that
	$$
	\int_{\B_{1}^+}  |\nabla v|^2 \d x \geq \delta \int_{\B_2^+}  |\nabla v|^2  \d x
	$$
	for some constant $\delta>0$. 
	
	Then, 
	$$
	\int_{\B_{3/2}^+}  |\nabla v|^2  \d x \leq C_\delta \int_{\B_{3/2}^+ \setminus \B_{1}^+} |x\cdot \nabla v|^2  \d x
	$$
	for some constant $C_\delta$ depending only on $n$ and $\delta$.
\end{lemma}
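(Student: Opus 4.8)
The statement is exactly of the type proven by a compactness/contradiction argument, as in \cite{CabreFigalliRosSerra-Dim9}. The plan is to argue by contradiction: suppose no such constant $C_\delta$ exists. Then there is a sequence of stable solutions $u_k \in C^2(B_2) \cap L^1_{1/2}(\R^n)$ to $(-\Delta)^{1/2} u_k = f_k(u_k)$ in $B_2$, with $f_k$ nonnegative, convex, $C^{1,\gamma}$, harmonic extensions $v_k$ satisfying the doubling hypothesis $\int_{\B_1^+} |\nabla v_k|^2 \geq \delta \int_{\B_2^+} |\nabla v_k|^2$, but with
\[
\int_{\B_{3/2}^+} |\nabla v_k|^2 \,\d x = 1 \qquad \text{while} \qquad \int_{\B_{3/2}^+ \setminus \B_1^+} |x \cdot \nabla v_k|^2 \,\d x \to 0.
\]
(The normalization is legitimate since both sides of the desired inequality are homogeneous in $v$; note that adding a constant to $u_k$ does not change $\nabla v_k$, and the equation is preserved since $f_k$ is arbitrary, so we may also freely normalize $v_k$ by a constant, say $\average_{\B_{3/2}^+} v_k = 0$.)

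The next step is to extract a limit. By \Cref{Coro:W12Gradient}, applied on a chain of balls covering $\overline{\B_{3/2}^+}$ compactly inside $\B_2$ (here one uses the doubling hypothesis together with the normalization to first bound $\int_{\B_{7/4}^+}|\nabla v_k|^2$, hence $\int_{\B_{7/4}^+}|\nabla v_k|^2 \leq C_\delta$), we obtain a uniform bound on $\|\nabla v_k\|_{H^1(\B_{3/2}^+)}$, i.e.\ on $\|D^2 v_k\|_{L^2(\B_{3/2}^+)}$. Combined with $\average_{\B_{3/2}^+} v_k = 0$ and Poincaré, the sequence $v_k$ is bounded in $H^2(\B_{3/2}^+)$, so up to a subsequence $v_k \rightharpoonup v_\infty$ weakly in $H^2(\B_{3/2}^+)$ and $v_k \to v_\infty$ strongly in $H^1_{\loc}$. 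Passing to the limit: $v_\infty$ is harmonic in $\B_{3/2}^+$ (the equation $\Delta v_k = 0$ holds in the interior, independent of $f_k$), $\int_{\B_{3/2}^+}|\nabla v_\infty|^2 = 1$ by strong $H^1$ convergence (on the compact subset, plus the uniform Hessian bound to control the part near $\partial \B_{3/2}$), and crucially $x \cdot \nabla v_\infty = 0$ a.e.\ in $\B_{3/2}^+ \setminus \B_1^+$ by the lower-semicontinuity/strong convergence. So $v_\infty$ is $0$-homogeneous on the annulus $\B_{3/2}^+ \setminus \B_1^+$, i.e.\ $v_\infty(x) = v_\infty(x/|x|)$ there; since $v_\infty$ is harmonic and $0$-homogeneous on an open annular region, it extends (or already is) a $0$-homogeneous harmonic function, which is determined by its values on a sphere. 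Finally one invokes the boundary condition: passing to the limit in $\partial_\nu v_k = f_k(v_k) \geq 0$ on $\{y=0\}$ shows $\partial_y v_\infty \leq 0$ on the flat part, so $v_\infty$ is a $0$-homogeneous subsolution (superharmonic as a function on $\R^{n+1}_+$ after even reflection, or one uses the Neumann sign directly). A $0$-homogeneous harmonic function on a half-annulus with $\partial_y v_\infty \le 0$ on the flat part must be constant — this is where the nonexistence of nonconstant $0$-homogeneous superharmonic functions enters (on the sphere, $v_\infty$ restricted is an eigenfunction for eigenvalue $0$ of the relevant Laplace–Beltrami-type operator with Neumann conditions, forcing it constant). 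Then $\nabla v_\infty = 0$ on the annulus, and by unique continuation / harmonicity $\nabla v_\infty \equiv 0$ on all of $\B_{3/2}^+$, contradicting $\int_{\B_{3/2}^+}|\nabla v_\infty|^2 = 1$.

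**The main obstacle.** The delicate point — and the one the authors flag as open for $\s \neq 1/2$ — is obtaining the \emph{uniform compactness}, that is, the uniform $H^2$ bound via \Cref{Coro:W12Gradient}, which in turn rests on the geometric stability inequality \Cref{Th:GeomStabilityFullGradient_s=1/2} being available and $f$-independent; here one must be careful that the chain of balls argument stays strictly inside $B_2$ and that the doubling hypothesis is genuinely used to get past balls not centered on $\{y=0\}$. A secondary subtlety is the limiting boundary argument: one must ensure the trace convergence $v_k(\cdot,0) \to v_\infty(\cdot,0)$ and the correct sense in which $\partial_y v_\infty \le 0$ survives the limit (weak-$*$ as measures on the flat boundary suffices), since $f_k(v_k)$ itself may not converge — only its sign and the Neumann trace structure matter. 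Once the compactness and the sign condition are secured, the Liouville-type rigidity for $0$-homogeneous harmonic functions with the one-sided Neumann condition is standard and closes the contradiction.
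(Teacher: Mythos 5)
Your proposal is correct and follows the paper's compactness-by-contradiction scheme closely: the same normalization, the same $f$-independent Hessian bound via \Cref{Coro:W12Gradient}, the same $H^1$-strong limit, and the same reduction to showing that a $0$-homogeneous, weakly superharmonic limit must be constant. The one step you carry out differently is the rigidity: you pass to the restriction of $v_\infty$ to the half-sphere, argue that after even reflection it is superharmonic on the compact sphere $\Sph^n$ and hence constant, and then propagate $\nabla v_\infty\equiv 0$ from the annulus to all of $\B_{3/2}^+$ via real-analyticity of harmonic functions. The paper instead works directly in $\R^{n+1}$ with the maximum principle: the even reflection of $v_\infty$ is weakly superharmonic in $\B_{3/2}$; the mean value property combined with $0$-homogeneity gives a lower bound on the annulus with the infimum attained at an interior point, so the strong minimum principle forces $v_\infty\equiv c_0$ there; then, since $v_\infty=c_0$ on $\partial\B_1$ and $v_\infty$ is superharmonic, $v_\infty\geq c_0$ in $\B_1$ and the strong minimum principle again forces constancy on all of $\B_{3/2}$. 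Both routes close the argument, but the paper's sidesteps the delicacy of making sense of the spherical trace of a function that is only weakly superharmonic (the flux on $\{y=0\}$ is a distribution, not a smooth Neumann datum) and does not need to invoke analyticity or unique continuation. Also note that the equatorial condition is a one-sided sign, so the restriction is a supersolution of the Laplace--Beltrami operator, not a Neumann ``eigenfunction for eigenvalue $0$'' as you write, and ``subsolution'' in your rigidity paragraph should read ``supersolution''. Finally, a single rescaled application of \Cref{Coro:W12Gradient} with $x_0=0$, $R_1=3/2$, $R_2=2$ already yields the uniform Hessian bound, so the chain-of-balls covering is superfluous.
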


\begin{proof}
	By contradiction, assume the result to be false. 
	Then, there exists a sequence of stable solutions $u_k$ in $B_2$ such that their harmonic extensions $v_k$ satisfy
	\begin{equation}
		\label{Eq:DoublingAssumption}
		\int_{\B_{1}^+}  |\nabla v_k|^2 \d x \geq \delta \int_{\B_2^+}  |\nabla v_k|^2  \d x,
	\end{equation}
	\begin{equation}
		\label{Eq:GradientNormalizedL2}
		\int_{\B_{3/2}^+}  |\nabla v_k|^2  \d x = 1,
	\end{equation}
	and
	$$
	\int_{\B_{3/2}^+ \setminus \B_{1}^+}   |x\cdot\nabla v_k|^2  \d x \to 0 \quad \text{ as } k \to \infty.
	$$
	Note that each $u_k = v_k(\cdot, 0) $ solves a different equation, $(-\Delta)^{1/2}u_k = f_k(u_k)$ in $B_2$, for some convex nonlinearities $f_k\geq0$.
	To ensure \eqref{Eq:GradientNormalizedL2}, we use that the class of solutions considered in the lemma is invariant under multiplication by constants.

	The crucial point in the following arguments is that the estimate from \Cref{Coro:W12Gradient} does not depend on the nonlinearity $f_k$.
	Thus, we will be able to use it to obtain uniform estimates in $k$.
	The details go as follows.
	
	Using \Cref{Coro:W12Gradient} (rescaled to hold in $B_2$ and with $x_0 = 0$ and $R_1 = 3/2$ and $R_2 = 2$ after the rescaling), the doubling assumption~\eqref{Eq:DoublingAssumption}, and \eqref{Eq:GradientNormalizedL2}, we get
	$$
	\norm{ \nabla v_k }^2_{H^1(\B_{3/2}^+)} 
	\leq C \norm{ \nabla v_k }^2_{L^2 (\B_2^+)}
	\leq \dfrac{C}{\delta} \norm{ \nabla v_k }^2_{L^2 (\B_1^+)} \leq \dfrac{C}{\delta},
	$$
	where $C$ is a positive dimensional constant.	
	Therefore, by Rellich's compactness theorem, up to removing a constant from $v_k$, a subsequence of $v_k$ converges in $H^1(\B_{3/2}^+)$ to some function~$v_\infty$. 
	Moreover,
	\begin{equation}
		\label{Eq:Gradient=1}
		\int_{\B_{3/2}^+}  |\nabla v_\infty|^2  \d x = 1
	\end{equation}
	and 
	$$
	\int_{\B_{3/2}^+ \setminus \B_{1}^+}  |x\cdot \nabla v_\infty|^2  \d x = 0.
	$$
	This yields that $v_\infty$ is a $0$-homogeneous function in $\B_{3/2}^+ \setminus \B_{1}^+$.
	
	Notice that each $v_k$ is harmonic in $\B_{3/2}^+$.
	In addition, $v_k$ has nonnegative flux on $B_{3/2} \subset \{y=0\}$ in the weak $H^1$ sense, i.e.,
	\begin{equation}
		\int_{\B_{3/2}^+} \nabla v_k \cdot \nabla \varphi \d x \geq 0
	\end{equation} 
	for every nonnegative smooth function $\varphi$ with compact support in $\B_{3/2}^+\cup B_{3/2}$.
	By the $H^1$ convergence the same two properties hold for $v_\infty$. 
	Therefore, by considering the even reflection of $v_\infty$ across  $\partial \R^{n+1}_+$ we obtain a function, still denoted  by $v_\infty$, which is  $0$-homogeneous in $\B_{3/2} \setminus \B_{1}$ and weakly superharmonic in $\B_{3/2}$.
	
	As a consequence of the mean value property for superharmonic functions, $v_\infty$ is bounded below in $\partial \B_{5/4}$  and thus, by $0$-homogeneity, also in $\B_{3/2} \setminus \B_{1}$.
	Moreover,  $\inf_{\B_{3/2} \setminus \B_{1}} v_\infty = \inf_{\B_{1/8}(x_0)} v_\infty$ for some $x_0\in \partial \B_{5/4}$.
	Therefore, the strong maximum principle (\cite[Theorem~8.19]{GilbargTrudinger}) yields that $v_\infty$ is constant in $\B_{3/2} \setminus \B_{1}$, say $v_\infty \equiv c_0$.
	
	Finally, since ${v_\infty}_{|\partial \B_1} = c_0$, by superharmonicity it follows that $v_\infty \geq c_0$ in $\B_{3/2}$. 
	But since $v_\infty \equiv c_0$ in $\B_{3/2} \setminus \B_{1}$, the strong maximum principle for superharmonic functions yields that $v_\infty \equiv c_0$ in $\B_{3/2}$.	
	This contradicts \eqref{Eq:Gradient=1} and concludes the proof.
\end{proof}

Thanks to the control of the gradient in terms of the radial derivative (under a doubling assumption), given by the previous lemma, we can establish the estimate that will lead to Hölder continuity.
Note that here we need to assume $2 \leq n \leq 4$.

\begin{proposition}
	\label{Prop:GeometricDecayOfDirichletInt}
	Let $u\in C^2(B_1)\cap L^1_{1/2}(\R^n)$ be a stable solution to $(-\Delta)^{1/2}u = f(u)$ in $B_1 \subset \R^n$, with $f$ a nonnegative convex $C^{1,\gamma}$ function for some $\gamma >0$. 
	Let $v$ be the harmonic extension of $u$ in $\R^{n+1}_+$.
	
	If $2 \leq n \leq 4$, then
	$$
	\int_{\B_{R}^+} r^{1-n} v_r^2 \d x  \leq C  R^{2\alpha} \int_{\B_{3/4}^+}  |\nabla v|^2  \d x  \quad \text{ for } R \leq 1/2,
	$$
	where $\alpha$ and $C$ are positive dimensional constants.
\end{proposition}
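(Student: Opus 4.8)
The plan is to follow the dichotomy–iteration scheme of Cabré, Figalli, Ros-Oton, and Serra, combining the weighted radial estimate of \Cref{Prop:EstimatevrAnnulus} (for $s=1/2$) with the gradient-control lemma \Cref{Lemma:v_rControlsGradientInAnnulus}. Throughout write $\mathcal D(\rho):=\int_{\B_\rho^+}|\nabla v|^2\d x$ and $\phi(\rho):=\int_{\B_\rho^+}r^{1-n}v_r^2\d x$; both are nondecreasing in $\rho$, and $\phi(\rho)\to0$ as $\rho\to0^+$ since $|\nabla v|$ is bounded near $\B_{1/2}^+\cup B_{1/2}$ (so $\phi(\rho)\le C\rho^2$ there). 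First I would record the scale-invariant forms of the two cited results: applying them to $u(\rho\,\cdot)$ --- which is again a stable solution of an equation of the same type, with harmonic extension $v(\rho\,\cdot)$ --- one obtains, whenever $\rho$ is small enough that the balls involved lie in $B_1$,
\begin{equation}\label{Eq:keyP}
\phi(\sigma/2)\ \le\ C\,\sigma^{1-n}\big(\mathcal D(3\sigma/4)-\mathcal D(\sigma/2)\big),\qquad 2\le n\le 4,
\end{equation}
from \Cref{Prop:EstimatevrAnnulus} (here $2\le n\le4$ is precisely $n\in(2s,10s)$ for $s=1/2$, and this is the only place the dimension restriction enters), and, under the doubling assumption $\mathcal D(\rho)\ge\delta\,\mathcal D(2\rho)$ (with $2\rho\le1$),
\begin{equation}\label{Eq:keyL}
\mathcal D(3\rho/2)\ \le\ C_\delta\,\rho^{-2}\int_{\B_{3\rho/2}^+\setminus\B_\rho^+}|x\cdot\nabla v|^2\d x\ \le\ C_\delta\,\rho^{\,n-1}\big(\phi(3\rho/2)-\phi(\rho)\big)
\end{equation}
from \Cref{Lemma:v_rControlsGradientInAnnulus}, where in the last step I used $|x\cdot\nabla v|^2=r^2v_r^2\le(3\rho/2)^{\,n+1}\,r^{1-n}v_r^2$ on the annulus.

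Next comes the dichotomy. Fix a small dimensional $\delta\in(0,1)$ and let $\rho\le\tfrac12$. \emph{If $\mathcal D(\rho)\ge\delta\,\mathcal D(2\rho)$} (``doubling at $\rho$''), then \eqref{Eq:keyL} holds; combining it with \eqref{Eq:keyP} at scale $\sigma=2\rho$, which gives $\phi(\rho)\le C\rho^{1-n}\mathcal D(3\rho/2)$, yields $\phi(\rho)\le C''_\delta\big(\phi(3\rho/2)-\phi(\rho)\big)$ with $C''_\delta$ a constant independent of $\rho$, hence the scale-invariant contraction
\begin{equation}\label{Eq:keyphi}
\phi(\rho)\ \le\ (1-\theta_\delta)\,\phi(3\rho/2),\qquad \theta_\delta:=\tfrac1{1+C''_\delta}\in(0,1).
\end{equation}
\emph{If instead $\mathcal D(\rho)<\delta\,\mathcal D(2\rho)$} (``non-doubling at $\rho$''), the Dirichlet energy contracts by the fixed factor $\delta$ across $2\rho\mapsto\rho$; since \eqref{Eq:keyP} always gives $\phi(\rho)\le C\rho^{1-n}\mathcal D(2\rho)$, along any maximal run of consecutive non-doubling scales $\mathcal D$ decays geometrically, and --- provided $\delta<2^{1-n}$ --- this forces $\phi$ to decay with exponent $2\alpha_1:=\log_2(1/\delta)-(n-1)>0$ along such a run. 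Note that choosing $\delta<2^{1-n}$ makes $C''_\delta$ large, hence $1-\theta_\delta$ close to $1$, but \eqref{Eq:keyphi} is still a genuine geometric decay.

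Finally I would iterate over a geometric sequence of scales down to $R$. At every scale one of the two cases of the dichotomy applies: the doubling scales contract $\phi$ directly by \eqref{Eq:keyphi}, and each maximal block of non-doubling scales contracts $\phi$ (through the contraction of $\mathcal D$ and \eqref{Eq:keyP}) as just described. Multiplying these contractions and recording radii yields $\phi(R)\le C\,R^{2\alpha}\,\phi(c_0)$ for all $R\le c_0$, with $2\alpha=\min\{2\alpha_1,\log_{3/2}(1/(1-\theta_\delta))\}>0$ and $c_0\in(0,\tfrac12]$ dimensional; for $c_0<R\le\tfrac12$ the estimate is trivial by monotonicity. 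Since \eqref{Eq:keyP} at the fixed scale $\sigma=2c_0$, together with monotonicity of $\mathcal D$, bounds $\phi(c_0)$ by $C\int_{\B_{3/4}^+}|\nabla v|^2\d x$, the proposition follows. The main obstacle is the bookkeeping in this last paragraph: the natural Morrey-type density $\rho^{1-n}\mathcal D(\rho)$ is \emph{not} monotone, so one must carefully reconcile the $\phi$-contraction coming from doubling scales with the $\mathcal D$-contraction coming from non-doubling blocks (which involve different radius ratios, $\tfrac32$ versus $2$), and one must check that all rescaled applications of \Cref{Prop:EstimatevrAnnulus} and \Cref{Lemma:v_rControlsGradientInAnnulus} remain inside the region where the relevant rescaled solution is stable.
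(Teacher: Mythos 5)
Your setup coincides with the paper's: the scale-invariant forms of \Cref{Prop:EstimatevrAnnulus} and \Cref{Lemma:v_rControlsGradientInAnnulus} that you record, the doubling dichotomy on $\mathcal D(\rho):=\int_{\B_\rho^+}|\nabla v|^2\d x$ (the paper effectively takes $\delta=2^{-n}$, consistent with your $\delta<2^{1-n}$), and the contraction $\phi(\rho)\le(1-\theta_\delta)\phi(3\rho/2)$ at doubling scales are all exactly the ingredients the paper assembles. Up to this point the two arguments are the same.

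The gap is in the final iteration, which you flag as the ``main obstacle'' but do not resolve, and which in fact does not follow by ``multiplying these contractions and recording radii.'' The problem is at the interface between a doubling scale and a non-doubling run: the only control you have on $\phi$ during a non-doubling run passes through $\mathcal D$, and to convert $\mathcal D$ back into $\phi$ at the preceding doubling scale you must invoke the gradient-control lemma, which costs a factor $C_\delta\gg1$. A non-doubling run of length one sandwiched between doubling scales therefore produces a multiplicative \emph{expansion} factor of order $C_\delta$, not a contraction, while the doubling-scale contraction $(1-\theta_\delta)$ is itself close to $1$ for the small $\delta$ you are forced to use; whether these balance to give a geometric decay is precisely the delicate part. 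The paper bypasses this by introducing the dyadic sequences $a_j:=2^{-j(1-n)}\int_{\B_{1/2^{j+1}}^+}|\nabla v|^2\d x$ and $b_j:=\int_{\B_{1/2^{j+1}}^+}r^{1-n}v_r^2\d x$, verifying that $b_j\le b_{j-1}$, that $a_j+b_j\le L\,a_{j-1}$, and that $a_j\ge\tfrac12a_{j-1}$ forces $b_j\le L(b_{j-1}-b_j)$, and then invoking the abstract iteration Lemma~3.2 of \cite{CabreFigalliRosSerra-Dim9}. The proof of that lemma is a genuine counting argument over the set of indices where $a_j$ halves, not a product of per-block factors; you should either cite it as the paper does or reproduce that counting, since the naive product does not close.
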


\begin{proof} 
	
	For $j\geq 0$, define
	$$
	a_j:= 2^{-j(1-n)} \int_{\B_{1/2^{j+1}}^+}  |\nabla v|^2  \d x  \quad \text{ and } \quad 
	b_j:=  \int_{\B_{1/2^{j+1}}^+}  r^{1 - n} v_r^2 \d x.
	$$
	By \Cref{Prop:EstimatevrAnnulus} (used with $\s= 1/2$) there exists a dimensional constant $C$ for which $b_0\leq C \norm{\nabla v}^2_{L^2(\B^+_{3/4})}$, and thus
	\begin{equation}
		\label{Eq:Recurence1}
		a_0 \leq M \quad \text{ and } \quad b_0 \leq M , \quad \text{ where } \quad M:= \max \{1,C\}\int_{\B_{3/4}^+}  |\nabla v|^2  \d x.
	\end{equation}
	
	Clearly, $b_j \leq b_{j-1}$ for $j\geq 1$.
	Moreover, by applying \Cref{Prop:EstimatevrAnnulus} to $v(\cdot /2^{j})$, it follows\footnote{It is useful to note that $a_j$ and $b_j$ are adimensional quantities, and that \Cref{Prop:EstimatevrAnnulus} can be written equivalently having the weight $r^{2\s - n}$ inside the integral of the right-hand side. } that 
	\begin{equation}
		\label{Eq:Recurence3prev}
		a_j + b_j \leq L_1 a_{j-1}  \quad \text{ for } j \geq 1,
	\end{equation}
	for some positive dimensional constant $L_1$.
	Furthermore, by applying \Cref{Lemma:v_rControlsGradientInAnnulus} (with  $\delta = 2^{-n}$) to  $v(\cdot /2^{j+1})$ with $j \geq 1$, we get that if 
	\begin{equation}
		2^{-j(1-n)} \int_{\B_{1/2^{j+1}}^+}  |\nabla v|^2  \d x \geq 2^{-n} 2^{-j(1-n)} \int_{\B_{1/2^{j}}^+}  |\nabla v|^2  \d x = \dfrac{1}{2} \left ( 2^{-(j-1)(1-n)} \int_{\B_{1/2^{j}}^+}  |\nabla v|^2  \d x \right),
	\end{equation}
	then
	\begin{equation}
		\begin{split}
			2^{-j(1-n)} \int_{\B_{1/2^{j+1}}^+} \! \! |\nabla v|^2  \d x &\leq 	2^{-j(1-n)} \int_{\B_{3/2^{j+2}}^+} \!\! |\nabla v|^2  \d x \\
			&
			\leq C \int_{\B_{3/2^{j+2}}^+ \setminus \B_{1/2^{j+1}}^+} \!\! r^{1-n} v_r^2  \d x \leq C \int_{\B_{1/2^{j}}^+ \setminus \B_{1/2^{j+1}}^+} \!\! r^{1-n} v_r^2  \d x  
		\end{split}
	\end{equation}
	for some dimensional constant $C$.	
	That is, if $j\geq 1$ and $a_j \geq (1/2) a_{j-1}$, then $a_{j} \leq L_2 (b_{j-1} - b_{j})$ for some other dimensional constant $L_2$.
	By \eqref{Eq:Recurence3prev}, this leads to $ b_{j} \leq 2 L_1 L_2 (b_{j-1} - b_{j})$ provided that $a_j \geq (1/2) a_{j-1}$.
	
	Summarizing, there exist a dimensional constant $L$ such that
	\begin{equation}
		\label{Eq:Recurence3}
		b_j \leq b_{j-1} \quad \text{ and } \quad a_j + b_j \leq L a_{j-1}  \quad \text{ for } j \geq 1,
	\end{equation}
	and
	\begin{equation}
		\label{Eq:Recurence4}
		\text{ if } \quad  a_j \geq \dfrac{1}{2} a_{j-1} \quad \text{ then } \quad  b_{j} \leq L (b_{j-1} - b_{j})  \quad \text{ for } j \geq 1.
	\end{equation}
	Hence, from Lemma~3.2 of~\cite{CabreFigalliRosSerra-Dim9} if follows that there exist two dimensional constants $\theta \in (0,1)$ and $C_0>0$ for which
	\begin{equation}
		\label{Eq:GeometricDecay}
		b_j \leq C_0 \theta^j M \quad \text{ for } j \geq 0.
	\end{equation}
	Taking $\alpha$ such that $(1/2)^{2\alpha} = \theta$ we conclude the proof.
\end{proof}

\begin{remark}
	Although the decay of the weighted radial derivative given by \Cref{Prop:GeometricDecayOfDirichletInt} will suffice to establish our main result, we note that, with the current tools, the same decay can be proved for the full gradient.
	That is,
	$$
	\int_{\B_{R}^+} r^{1-n} |\nabla v|^2 \d x  \leq C  R^{2\alpha} M \quad \text{ for } R \leq 1/2,
	$$
	where $M$ is defined as in the previous proof.
	For this, one first proves by induction that there exists a dimensional constant $C_\star> 0$ such that 
	\begin{equation}
		\label{Eq:DyadicIntegralGeomDecay}
		d_k := \int_{\B_{1/2^{k-1}}^+ \setminus \B_{1/2^{k}}^+ } r^{1-n} |\nabla v|^2 \d x \leq C_\star \theta^k M \quad \text{ for } k\geq 2.
	\end{equation}
	From this, one concludes that 
	$$
	\int_{\B_{ 1/2^{j}}^+} r^{1-n} |\nabla v |^2 \d x  = \sum_{k=j+1}^\infty \int_{\B_{1/2^{k-1}}^+ \setminus \B_{1/2^{k}}^+ } r^{1-n} |\nabla v|^2 \d x
	\leq C_\star \sum_{k=j+1}^\infty \theta^k M= C \theta^j M
	$$
	for some dimensional constant $C$.
	
	To show \eqref{Eq:DyadicIntegralGeomDecay}, in each induction step we have the following dichotomy: either $d_{k+1} \leq \theta d_k$ (and  then \eqref{Eq:DyadicIntegralGeomDecay} follows readily from the induction hypothesis $d_k\leq C_\star \theta^k M$), or $d_{k+1} > \theta d_k$.
	In this second case, a suitable doubling assumption for $|\nabla v|^2 \d x$ is at our disposal. 
	This allows us to use \Cref{Lemma:v_rControlsGradientInAnnulus} (after an appropriate scaling) and hence control the $L^2$ norm of the gradient in the annulus by the $L^2$ norm of the radial derivative.
	From this and the decay given by \eqref{Eq:GeometricDecay}, we deduce \eqref{Eq:DyadicIntegralGeomDecay}.
\end{remark}

\section{$H^1$ control of the harmonic extension}
\label{Sec:H1Control}

In this section we establish a further ingredient towards the proof of \Cref{Th:Holder}.
In view of our last result, \Cref{Prop:GeometricDecayOfDirichletInt}, we must control the $H^1$ norm of the harmonic extension $v$ of a stable solution $u$ by the $L^1_{1/2}$ norm of $u$ in $\R^n$.
This is the content of the next proposition.

\begin{proposition}
	\label{Prop:H1ControlledByL1}
	Let $n \geq 1$ and let $u\in C^2(B_1)\cap L^1_{1/2}(\R^n)$ be a stable solution to $(-\Delta)^{1/2}u = f(u)$ in $B_1 \subset \R^n$, where $f$ is a nonnegative convex $C^{1,\gamma}$ function for some $\gamma >0$.
	Let $v$ be the harmonic extension of $u$ in $\R^{n+1}_+$. 
	
	Then, 
	\begin{equation}
		\label{Eq:H^1Estimate}
		\norm{ \nabla v}_{L^2(\B_{1/2}^+)}  \leq C \norm{u}_{L^1_{1/2}(\R^n)}
	\end{equation}	
	for some dimensional constant $C$.
	
	As a consequence,
	\begin{equation}
		\label{Eq:H^1/2Estimate}
		\seminorm{ u}_{H^{1/2}(B_{1/2})}  \leq C \norm{u}_{L^1_{1/2}(\R^n)}
	\end{equation}
	for some other dimensional constant $C$.
\end{proposition}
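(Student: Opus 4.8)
The plan is to derive estimate \eqref{Eq:H^1Estimate} by a boot-strapping argument in which the crucial new ingredient is the second-derivative bound from \Cref{Coro:W12Gradient}, and then to deduce \eqref{Eq:H^1/2Estimate} from it by the trace inequality. First I would observe that, because $v$ is the harmonic extension of $u$, the $L^1_{1/2}$-norm of $u$ controls pointwise behaviour of $v$ away from $\partial\R^{n+1}_+$: writing $v$ as the convolution of $u$ with the Poisson kernel of the half-space (cf.\ the formula recalled in the first appendix), one gets $\|v\|_{L^1(\B_{7/8}^+)} \le C \|u\|_{L^1_{1/2}(\R^n)}$, since the Poisson kernel at points with $y$ bounded below decays like $(1+|x'|^2)^{-(n+1)/2}$, which is precisely the weight in \eqref{Eq:DefL1s} for $\s=1/2$. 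Thus it suffices to bound $\|\nabla v\|_{L^2(\B_{1/2}^+)}$ by $\|v\|_{L^1(\B_{7/8}^+)}$ (up to a dimensional constant).

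The heart of the matter is that this last bound cannot follow from interior elliptic estimates alone: near $\partial\R^{n+1}_+$ the equation $v_y(\cdot,0)=-f(u)$ gives no a priori control, and $f$ is arbitrary. This is where stability enters. By \Cref{Coro:W12Gradient} (applied with, say, $R_1=3/4$, $R_2=7/8$, $x_0=0$), we have the $f$-independent estimate
\begin{equation*}
	\|\nabla v\|_{L^2(\B_{3/4}^+)} + \|D^2 v\|_{L^2(\B_{3/4}^+)} \le C\,\|\nabla v\|_{L^2(\B_{7/8}^+)}.
\end{equation*}
So the full Hessian of $v$ is controlled by the gradient on a slightly larger half-ball. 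The idea is then to run an interpolation: control $\|\nabla v\|_{L^2}$ on a ball by a small multiple of $\|D^2 v\|_{L^2}$ on a slightly larger ball plus a large multiple of $\|v\|_{L^1}$ on a yet larger ball. This is exactly the type of interpolation inequality recorded (and, per the excerpt, reproved in the appendix) from \cite{CabreQuantitative}; I would invoke it in the form
\begin{equation*}
	\|\nabla v\|_{L^2(\B_{r_1}^+)} \le \varepsilon\, \|D^2 v\|_{L^2(\B_{r_2}^+)} + C_\varepsilon\, \|v\|_{L^1(\B_{r_2}^+)}
\end{equation*}
for $r_1<r_2$. Combining the two displays with a suitable sequence of radii $1/2 < r_1 < r_2 < 3/4 < 7/8$ and choosing $\varepsilon$ small relative to the constant $C$ above, the $\|D^2 v\|$ and the large $\|\nabla v\|$-terms can be absorbed — here one may need a standard covering/iteration step on a chain of nested half-balls (à la Simon's absorption lemma) to handle the fact that \Cref{Coro:W12Gradient} trades $\B_{r_1}$ against $\B_{r_2}$ rather than giving an absolute bound. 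The outcome is
\begin{equation*}
	\|\nabla v\|_{L^2(\B_{1/2}^+)} \le C\, \|v\|_{L^1(\B_{7/8}^+)} \le C\, \|u\|_{L^1_{1/2}(\R^n)},
\end{equation*}
which is \eqref{Eq:H^1Estimate}.

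Finally, \eqref{Eq:H^1/2Estimate} follows at once from the trace inequality $\seminorm{\xi(\cdot,0)}_{H^{1/2}(\R^n)}^2 \le d_{1/2}\,\seminorm{\xi}_{H^1(\R^{n+1}_+)}^2$ quoted after \eqref{Eq:StabilityExtension} (with $d_{1/2}=1$): apply it to $\xi = \zeta v$ for a cut-off $\zeta$ equal to $1$ on $\B_{1/2}^+$ and supported in $\B_{3/4}^+$, so that $\seminorm{u}_{H^{1/2}(B_{1/2})} \le \seminorm{\zeta v(\cdot,0)}_{H^{1/2}(\R^n)} \le C\|v\|_{H^1(\B_{3/4}^+)} \le C(\|\nabla v\|_{L^2(\B_{3/4}^+)} + \|v\|_{L^2(\B_{3/4}^+)})$, and then control both terms by $\|u\|_{L^1_{1/2}(\R^n)}$ using \eqref{Eq:H^1Estimate} (after one more application of \Cref{Coro:W12Gradient} to pass from $\B_{1/2}$ to $\B_{3/4}$) together with the Poisson-kernel bound for $\|v\|_{L^2(\B_{3/4}^+)}$.

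I expect the main obstacle to be the absorption/iteration step: making the interpolation inequality and \Cref{Coro:W12Gradient} cooperate requires keeping track of the radii carefully, since neither gives an estimate on a \emph{fixed} ball by data on a \emph{fixed} larger ball with a dimensional constant — both lose a little room — so a clean bound on $\B_{1/2}^+$ in terms of $\B_{7/8}^+$ needs either a summation over a geometric chain of radii or an application of the standard iteration lemma for quantities that are "almost superadditive". Everything else (the Poisson-kernel estimate, the trace inequality, the cut-off manipulations) is routine.
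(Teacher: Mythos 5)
Your proposal follows the same strategy as the paper's proof: Poisson-kernel bound to control $\|v\|_{L^1}$ by $\|u\|_{L^1_{1/2}}$, \Cref{Coro:W12Gradient} to control $D^2v$, an interpolation inequality relating $\|\nabla v\|_{L^2}$, $\|D^2 v\|_{L^2}$ and $\|v\|_{L^1}$, and an absorption step via Simon's iteration lemma; then the trace inequality with a cut-off for \eqref{Eq:H^1/2Estimate}. (The paper realizes the interpolation by covering half-balls with cubes and applying \Cref{Prop:InterpolationGradient} and \Cref{Prop:InterpolationNash} on each cube, then summing, but that is a technical implementation of exactly what you sketch.)

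There is one genuine gap, at the very end. You propose to control $\|v\|_{L^2(\B_{3/4}^+)}$ directly by a ``Poisson-kernel bound'' in terms of $\|u\|_{L^1_{1/2}(\R^n)}$. That bound does not exist: the Poisson formula controls $\|v\|_{L^p(\B_R^+)}$ only in terms of $\|u\|_{L^p(B_{2R})} + \|u\|_{L^1_{1/2}(\R^n)}$ (as in \Cref{Lemma:LpExtension}), and for $p=2$ the local $L^2$ norm of $u$ is not controlled by its $L^1$-type norm. Near $y=0$ the kernel concentrates, so one cannot gain integrability this way. The correct move — and what the paper does — is to use the Nash-type interpolation \Cref{Prop:InterpolationNash} once more to dominate $\|v\|_{L^2(\B_{3/4}^+)}^2$ by $C\|\nabla v\|_{L^2(\B_{7/8}^+)}^2 + C\bigl(\int_{\B_{7/8}^+}|v|\bigr)^2$, and then apply \eqref{Eq:H^1Estimate} and the Poisson-kernel $L^1$ bound. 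With that small repair, your argument is the paper's argument.
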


\begin{proof}
	The main idea of the proof is, as in \cite{CabreQuantitative}, to use the interpolation results of \Cref{Sec:Interpolation} combined with the $L^2$ estimate for $D^2v$ from \Cref{Coro:W12Gradient}.
	Since some error terms will appear in the right-hand side of the estimates, we will use an abstract lemma of L.~Simon to absorb them into the left-hand side. 
	In order to use this lemma, we establish our estimates in generic balls $\B_R (x_0) \subset \B_1$ (not necessarily half-balls centered at points $x_0 = (x_0', 0) \in \{y=0\}$) after intersecting them with $\R^{n+1}_+$.
	Through the proof we will use the letter $C$ to denote a dimensional constant which may change in each appearance.
	
	Given $x_0\in \B_1\subset \R^{n+1}$ and $R\in (0,1]$ such that $\B_R (x_0) \subset \B_1$, we cover the set $\B_{R/2} (x_0)  \cap \{y>0\}$ (up to a set of measure zero) with a family of disjoint open cubes $Q_j \subset \R^{n+1}_+$ of side-length $l_nR$, for some dimensional number $l_n$ small enough such that $Q_j \subset \B_{3R/4} (x_0)$. 
	
	Now, given $\varepsilon \in (0,1)$, in each cube we use the interpolation results of  \Cref{Prop:InterpolationGradient,Prop:InterpolationNash} (with $\R^n$ replaced by $\R^{n+1}$), properly rescaled to hold in the cubes $Q_j$,  and taking $\tilde{\ep} = \ep^{3/2}$ in \Cref{Prop:InterpolationNash}.
	Note that $v$ is $C^2$ in each $\overline{Q_j}$ by \Cref{Lemma:RegularityHorizontalGrad}~($c$).
	We obtain
	\begin{equation}
		\begin{split}
			R^{2-(n + 1)}\int_{Q_j} |\nabla v |^2 \d x & \leq C R^{3 -(n + 1)} \varepsilon  \int_{Q_j} |\nabla v | |D^2 v| \d x + C R^{2-(n + 1)} \varepsilon  \int_{Q_j} |\nabla v |^2 \d x \\
			& \quad \quad + C R^{-2(n + 1)} \varepsilon^{-2 - 3(n+1)/2} \left( \int_{Q_j} |v| \d x \right)^2
		\end{split}
	\end{equation}
	for every $\varepsilon\in (0,1)$.
	Multiplying the above inequality by $R^{2(n+1)}$ and using $2 R^3 |\nabla v | |D^2 v| \leq R^2 |\nabla v |^2 +  R^4 |D^2 v|^2$, we get
	\begin{equation}
		\begin{split}
			R^{n+3}\int_{Q_j} |\nabla v |^2 \d x & \leq C R^{n + 5} \varepsilon  \int_{Q_j}  |D^2 v|^2 \d x + C R^{n + 3} \varepsilon  \int_{Q_j} |\nabla v |^2 \d x \\
			& \quad \quad + C \varepsilon^{-2 - 3(n+1)/2} \left( \int_{Q_j} |v| \d x \right)^2.
		\end{split}
	\end{equation}
	Adding up all these inequalities (using that the disjoint cubes $Q_j$ cover $\B_{R/2}(x_0) \cap \{y>0\}$ and are contained in $\B_{3R/4}(x_0) \cap \{y>0\}$), we obtain
	\begin{equation}
		\begin{split}
			R^{n+3}\int_{\B_{R/2}(x_0) \cap \{y>0\}} |\nabla v |^2 \d x 
			& \leq C R^{n + 5} \varepsilon  \int_{\B_{3R/4}(x_0) \cap \{y>0\}}  |D^2 v|^2 \d x \\
			& \quad \quad + C R^{n + 3} \varepsilon  \int_{\B_{3R/4}(x_0) \cap \{y>0\}} |\nabla v |^2 \d x \\
			& \quad \quad + C \varepsilon^{-2 - 3(n+1)/2} \left( \int_{\B_{3R/4}(x_0) \cap \{y>0\}} |v| \d x \right)^2.
		\end{split}
	\end{equation}
	
	Next, we combine this information with \Cref{Coro:W12Gradient} (used with $R_1=3R/4$ and $R_2=R$), which gives that
	\begin{equation}
		R^2   \int_{\B_{3R/4}(x_0) \cap \{y>0\}}  |D^2 v|^2 \d x \leq C \int_{\B_{R}(x_0) \cap \{y>0\}} |\nabla v |^2 \d x.
	\end{equation}
	We deduce that
	\begin{equation}
		\label{Eq:H^1EstimateEpsilonBalls}
		\begin{split}
			R^{n+3}\int_{\B_{R/2}(x_0) \cap \{y>0\}} |\nabla v |^2 \d x 
			& \leq C R^{n + 3} \varepsilon  \int_{\B_{R}(x_0) \cap \{y>0\}} |\nabla v |^2 \d x \\
			&\quad \quad + C \varepsilon^{-2 - 3(n+1)/2} \left( \int_{\B_{1}^+} |v| \d x \right)^2.
		\end{split}
	\end{equation}

	Applying now an abstract result of L.~Simon ---that we use as stated in  \cite[Lemma~A.4]{CabreFigalliRosSerra-Dim9} taking $\beta = n + 3$ and $\sigma (\B_R (x_0)) := \norm{\nabla v}_{L^2 (\B_R (x_0)\cap \{y>0\})}^2$, after choosing $\varepsilon$ dimensionally small in \eqref{Eq:H^1EstimateEpsilonBalls}--- we get
	\begin{equation}
		\label{Eq:H1ControledByL1Ext}
		\int_{\B_{ 1/2}^+}  |\nabla v |^2 \d x \leq C \left( \int_{\B_{1}^+} |v | \d  x \right)^2.
	\end{equation}
	Finally, to bound the $L^1$ norm of $v$ in terms of its trace we use \Cref{Lemma:LpExtension}, which gives
	\begin{equation}
		\label{Eq:L1ExtLs}
		\int_{\B_{1}^+} |v| \d x \leq C  \norm{u}_{L^1_{1/2}(\R^n)}
	\end{equation}
	for some dimensional constant $C$.
	This concludes the proof of \eqref{Eq:H^1Estimate}.
	
	To establish the second estimate of the proposition, we take a cut-off function $\zeta$ such that $\zeta \equiv 1$ in $\B_{1/8}^+$ and $\zeta \equiv 0$ in $\R^{n + 1}_+ \setminus \B_{ 1/4}^+$. 
	Now, by the well-known trace inequality $\seminorm{w(\cdot, 0)}_{H^{1/2}(\R^n)} \leq \seminorm{w}_{H^{1}(\R^{n+1}_+)}$ used with $w = v \zeta$, we obtain
	\begin{align}
		\seminorm{u}_{H^{1/2}(B_{1/8})}^2 \leq \seminorm{u\zeta}_{H^{1/2}(\R^n)}^2 \leq \int_{\B_{ 1/4}^+} |\nabla (v \zeta)|^2 \d x \leq C  \left( \int_{\B_{ 1/4}^+} |\nabla v|^2 + \int_{\B_{ 1/4}^+} |v|^2 \right).
	\end{align}
	Using \Cref{Prop:InterpolationNash} as before (in a finite collection of disjoint cubes $Q_j$ covering $\B_{ 1/4}^+$ but all contained in $\B_{ 1/2}^+$, with $\tilde{\varepsilon} = 1/2$, and then adding up the resulting inequalities), we get that
	$$
	\int_{\B_{ 1/4}^+} |v|^2 \leq C \int_{\B_{ 1/2}^+} |\nabla v|^2 + C \left( \int_{\B_{ 1/2}^+} |v| \right)^2,
	$$ 
	and therefore
	$$
	\seminorm{u}_{H^{1/2}(B_{1/8})}^2 \leq C \int_{\B_{ 1/2}^+} |\nabla v|^2 + C \left( \int_{\B_{ 1/2}^+} |v| \right)^2.
	$$ 
	To conclude, we use the already proved estimate \eqref{Eq:H^1Estimate} together with \eqref{Eq:L1ExtLs} to obtain
	$$
	\seminorm{u}_{H^{1/2}(B_{1/8})} \leq C \norm{u}_{L^1_{1/2}(\R^n)}.
	$$ 
	
	Finally, to replace $B_{1/8}$ by $B_{1/2}$ in the above inequality and thus conclude \eqref{Eq:H^1/2Estimate}, we use a standard covering and scaling argument, noticing that all the estimates are independent of the nonlinearity $f$ and hence can be applied to rescaled stable solutions.
\end{proof}

\section{Proof of the main results}
\label{Sec:LinftyHalfLaplacian}

In this last section we finally establish \Cref{Th:Holder} as well as \Cref{Coro:LinftyConvexDiriclet,Coro:Extremal}.
In the case $n=1$ we borrow the results from next section.

To establish the Hölder bound of \Cref{Th:Holder}, we will use a Morrey-type estimate from~\cite{CabreQuantitative} adapted to radial derivatives. It is stated in  \Cref{Lemma:MorreyAverage} below. Notice that in the current boundary setting, to obtain Hölder regularity on $\R^n= \partial \R^{n+1}_+$ it will suffice to have a geometric-decay bound for radial derivatives with respect to points on $\R^n=\partial \R^{n+1}_+$. That is, we do not need control on the full gradient, neither on radial derivatives with respect to points in~$\R^{n+1}_+$.

Here and through the section, given a function $w$ defined in $\R^{n+1}_+$ and a point $z'\in \R^n$, we will denote the radius with respect to $z'$ by
\begin{equation}
	r_{z'} = 	r_{z'} (x) := |x - (z',0)|, \quad x \in \R^{n+1}_+,
\end{equation}
and the radial derivative of $w$ with respect to $z'$ by
\begin{equation}
	w_{r_{z'}} (x) := \dfrac{ x - (z',0)}{|x - (z',0)|} \cdot \nabla w(x), \quad x \in \R^{n+1}_+.
\end{equation}

\begin{lemma} [\cite{CabreQuantitative}]
	\label{Lemma:MorreyAverage}
	Let $z' \in \R^n$, $d>0$, and $w$ be a $C^1$ function in $\overline{\B^+_d}(z') \subset \overline{\R^{n+1}_+}$.
	Assume that, for some positive constants $\alpha$ and $C_1$, 
	\begin{equation}
		\label{Eq:MorreyGrowth}
		\int_{ \B_{R}^+ (z')} |w_{r_{z'}}| \d x \leq C_1 R^{n + \alpha} \quad \text{ for all } R \leq d.
	\end{equation}
	
	Let $S$ be any measurable subset of $\B^+_d(z')$ and $w_S:= \frac{1}{|S|}\int_S w \d x$. 
	Then, 
	\begin{equation}
		|w(z',0)- w_S| \leq C C_1 \dfrac{d^{n+1}}{|S|} d^\alpha
	\end{equation}	
	for some constant $C$ depending only on $n$ and $\alpha$.
\end{lemma}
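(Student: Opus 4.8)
\textbf{Proof proposal for \Cref{Lemma:MorreyAverage}.}

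The plan is to estimate the oscillation of $w$ between the boundary point $(z',0)$ and a generic point $x\in\B^+_d(z')$ by integrating the radial derivative along the ray from $(z',0)$ to $x$, and then to average this over $x\in S$. First I would reduce, by translation, to the case $z'=0$, so that $r_{z'}=r=|x|$ and $w_{r_{z'}}=w_r$. For a fixed point $x\in\B^+_d$, write $x = \rho\,\omega$ with $\rho = |x|\in(0,d]$ and $\omega = x/|x|$ a unit vector in the closed upper half-sphere; then along the segment $t\mapsto t\omega$, $t\in(0,\rho]$, the fundamental theorem of calculus (valid since $w\in C^1(\overline{\B^+_d})$ and the whole segment lies in $\overline{\B^+_d}$, which is star-shaped with respect to $0$) gives
\begin{equation}
	w(x) - w(0) = \int_0^{\rho} \partial_t\big(w(t\omega)\big)\, \d t = \int_0^{\rho} w_r(t\omega)\,\d t .
\end{equation}

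Next I would integrate this identity over $x\in\B^+_d$ and pass to polar coordinates $x=\rho\omega$, $\d x = \rho^{n}\,\d\rho\,\d\sigma(\omega)$, in order to bound $\int_{\B^+_d}|w(x)-w(0)|\,\d x$. Using Fubini to swap the $t$ and $\rho$ integrations and reinserting the Jacobian, one is led to a weighted integral of $|w_r|$ over $\B^+_d$ against a radial weight comparable to $r^{-n}d^{n+1}$ (up to a dimensional constant): concretely, for each $t\le d$ the set of $x=\rho\omega$ with $\rho\ge t$ contributes, after integrating $\rho^n$ over $\rho\in[t,d]$, a factor $\le d^{n+1}/(n+1)$. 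Rewriting the resulting $t$-integral of $\int_{\{|x|\ge t\}\cap\B_d^+}$ back as a single integral over $\B_d^+$ produces a weight $\sim r^{-n}$, so that
\begin{equation}
	\int_{\B^+_d} |w(x)-w(0)|\,\d x \le C\, d^{n+1}\int_{\B^+_d} \frac{|w_r(x)|}{|x|^{n}}\,\d x .
\end{equation}
The growth hypothesis \eqref{Eq:MorreyGrowth} is exactly what controls the last integral: decomposing $\B^+_d$ into dyadic annuli $\B^+_{2^{-k}d}\setminus\B^+_{2^{-k-1}d}$ and using $\int_{\B^+_{R}}|w_r|\le C_1 R^{n+\alpha}$ on each, one gets $\sum_{k\ge 0} (2^{-k}d)^{-n}\,C_1(2^{-k}d)^{n+\alpha} = C_1 d^{\alpha}\sum_k 2^{-k\alpha} = C\,C_1 d^{\alpha}$, the series converging since $\alpha>0$. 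Hence $\int_{\B^+_d}|w(x)-w(0)|\,\d x \le C\,C_1\,d^{n+1+\alpha}$.

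Finally, for any measurable $S\subseteq\B^+_d$,
\begin{equation}
	|w(0,0) - w_S| = \left| \frac{1}{|S|}\int_S \big(w(0,0)-w(x)\big)\,\d x\right| \le \frac{1}{|S|}\int_S |w(x)-w(0,0)|\,\d x \le \frac{1}{|S|}\int_{\B^+_d}|w(x)-w(0,0)|\,\d x,
\end{equation}
which by the previous bound is at most $C\,C_1\,\dfrac{d^{n+1}}{|S|}\,d^{\alpha}$, as claimed. The only mildly delicate points are the justification of the radial fundamental-theorem-of-calculus identity up to the boundary (handled by the $C^1$ regularity on the closed half-ball together with star-shapedness, approximating the segment endpoint $0$ by $t_0\omega$ and letting $t_0\downarrow0$ if one wants to be careful) and keeping track of constants in the Fubini/polar-coordinates step; I expect the latter bookkeeping, rather than any conceptual difficulty, to be the main thing to get right. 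Note also that the roles of the weight $r^{-n}$ and the hypothesis are perfectly matched, so no room is lost.
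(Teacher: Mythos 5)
Your proof is correct and follows essentially the same route as the paper: integrate the radial derivative along rays from $(z',0)$, enlarge $S$ to the full half-ball, pass to polar coordinates to reduce to the weighted integral $\int_{\B_d^+(z')} r_{z'}^{-n}|w_{r_{z'}}|\,\d x$, and bound it using \eqref{Eq:MorreyGrowth}. The only (inconsequential) difference is the last step: the paper bounds this weighted integral by integration by parts on $\varphi(t)=\int_{\B_t^+(z')}|w_{r_{z'}}|\,\d x$, whereas you decompose $\B_d^+(z')$ into dyadic annuli and sum a geometric series---both yield the same $\le C C_1 d^\alpha$.
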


\begin{proof}
	
	The proof is that of Lemma~C.1 in \cite{CabreQuantitative} (which, in turn, follows that of Chapter~7 in~\cite{GilbargTrudinger}).
	Given $x\in S$, we have
	\begin{equation}
		\begin{split}
			w(x) - w(z',0) &= \int_0^{|x - (z',0)|}  \dfrac{ x - (z',0)}{|x - (z',0)|} \cdot \nabla w \left ( (z',0) + \rho \dfrac{ x - (z',0)}{|x - (z',0)|} \right ) \d \rho \\
			&=  \int_0^{|x - (z',0)|}  w_{r_{z'}} \left ( (z',0) + \rho \dfrac{ x - (z',0)}{|x - (z',0)|} \right ) \d \rho.
		\end{split}		
	\end{equation}
	Averaging in $x\in S$, taking absolute values, and using spherical coordinates centered at $(z',0)$ (i.e., $x = (z',0) + \tilde{\rho} \omega$, with $\tilde{\rho} = |x - (z',0) | = r_{z'}$ and $\omega \in \Sph^{n}_+$, where $\Sph^{n}_+ := \Sph^{n} \cap \{y>0\}$), we get
	\begin{equation}
		\begin{split}
			|w(z',0)  - w_S| &\leq  \left | \dfrac{1}{|S|} \int_S \df x  \int_0^{|x - (z',0)|}  \d \rho \,  w_{r_{z'}} \left ( (z',0) + \rho \dfrac{ x - (z',0)}{|x - (z',0)|} \right ) \right | \\
			&\leq    \dfrac{1}{|S|} \int_{\B^+_d(z')} \df x  \int_0^{d}  \d \rho \left | w_{r_{z'}} \left ( (z',0) + \rho \dfrac{ x - (z',0)}{|x - (z',0)|} \right ) \right | \\
			&= \dfrac{1}{|S|} \int_0^d \df \tilde{\rho} \, \tilde{\rho}^n  \int_{\Sph^n_+} \df \omega  \int_0^{d}  \d \rho \left | w_{r_{z'}} \left ( (z',0) + \rho \omega\right ) \right | \\
			&= \dfrac{d^{n+1}}{(n+1) |S|} \int_{\Sph^n_+} \df \omega  \int_0^{d}  \d \rho \left | w_{r_{z'}} \left ( (z',0) + \rho \omega\right ) \right | \\
			&= \dfrac{d^{n+1}}{(n+1) |S|} \int_{\B^+_d(z')} r_{z'}^{-n}  | w_{r_{z'}} |  \d x.
		\end{split}		
	\end{equation}
	
	Let us now bound this last integral. 
	For this, set
	\begin{equation}
		\varphi (t) := \int_{\B^+_t(z')}  | w_{r_{z'}} |  \d x
	\end{equation}
	and use that
	\begin{equation}
		\varphi' (t) = \int_{\partial \B_t(z') \cap \{y>0\}}  | w_{r_{z'}} |  \d \mathcal{H}^n.
	\end{equation}
	Integrating by parts and using \eqref{Eq:MorreyGrowth} we get
	\begin{equation}
		\begin{split}
			\int_{\B^+_d(z')} r_{z'}^{-n}  | w_{r_{z'}} |  \d x 
			&= \int_0^d \df t \int_{\partial \B_t(z') \cap \{y>0\}} \df \mathcal{H}^n \, t^ {-n} | w_{r_{z'}} |   = \int_0^d t^{-n} \varphi'(t) \d t \\
			& = d^{-n} \varphi (d)  + n \int_0^d t^{-n-1} \varphi(t) \d t  \\
			& \leq C_1 d^{-n} d^{n + \alpha }  + n C_1 \int_0^d t^{-n-1} t^{n + \alpha} \d t  \leq C C_1 d^\alpha,
		\end{split}
	\end{equation}
	establishing the result.
\end{proof}

Once we have the previous result at hand, we can proceed now with the proof of our main theorem, which provides the Hölder estimate for stable solutions in dimensions $n\leq 4$.

\begin{proof}[Proof of \Cref{Th:Holder}]
	The $H^{1/2}$ estimate \eqref{Eq:H1/2} has been already proved in \Cref{Prop:H1ControlledByL1}.
	From now on, we assume that $2 \leq n \leq 4$, and notice that once the Hölder estimate \eqref{Eq:Holder} is established in dimension $n=2$, then by the results of next section (\Cref{Lemma:AddDimensionEquation,Lemma:AddDimensionHsL1s}), it will hold as well for stable solutions $u:\R \to \R$ in dimension $n=1$.
	For this, we simply define $w(x_1,x_2) := u(x_1)$ and apply estimate \eqref{Eq:Holder} to $w$.

	Now, since $2 \leq n \leq 4$, by \Cref{Prop:GeometricDecayOfDirichletInt} we know that 
	$$
	\int_{\B_{R}^+}  |v_r| \d x  \leq \left( \int_{\B_{R}^+} r^{n-1} \d x\right)^{1/2} 
	\left( \int_{\B_{R}^+} r^{1-n} |v_r|^2 \d x\right)^{1/2} 
	\leq C  R^{n + \alpha} \norm{\nabla v}_{L^2(\B^+_{3/4})}
	$$
	for $R \leq 1/2$, where $\alpha$ and $C$ are some positive dimensional constants.
	Replacing the origin by any $z'\in \overline{B}_{1/8} \subset \R^n$ in the above inequality (since the equation solved by $u$ is invariant under translations in $\R^n$), we get that
	\begin{equation}
		\int_{\B_{R}^+(z')}  |v_{r_{z'}} | \d x  \leq C  R^{n + \alpha} \norm{\nabla v}_{L^2(\B^+_{3/4}(z'))} \leq C  R^{n + \alpha} \norm{\nabla v}_{L^2(\B^+_{7/8})} \quad \text{ for all } R \leq 1/2.
	\end{equation}
	Using \Cref{Prop:H1ControlledByL1} to control $\norm{\nabla v}_{L^2(\B^+_{7/8})}$ (after a covering and scaling argument) by $\norm{u}_{L^1_{1/2}(\R^n)}$, we conclude
	\begin{equation}
		\label{Eq:GrowthRadialDerivatives}
		\int_{\B_{R}^+(z')}  |v_{r_{z'}} | \d x  \leq C_0  R^{n + \alpha} \norm{u}_{L^1_{1/2}(\R^n)} \quad \text{ for all } z'\in \overline{B}_{1/8} \text{ and } R \leq 1/2,
	\end{equation}
	for some other dimensional constant $C_0$.

	From these geometric-decay bounds for radial derivatives, we now obtain a Hölder estimate for $u$ in $\overline{B}_{1/8}$.
	For this, given $z'$ and $\tilde{z}' $ in $\overline{B}_{1/8}$, we set
	\begin{equation}
		d := |z' - \tilde{z}'| \leq 1/4 \quad \text{ and } \quad  S:= \B^+_d (z') \cap \B^+_d (\tilde{z}').
	\end{equation}
	Note that  $|S|= c(n) d^{n+1}$ for some dimensional constant $c(n)$.
	Hence, using \Cref{Lemma:MorreyAverage} in $\B^+_d (z')$ and $\B^+_d (\tilde{z}')$ ---in both cases with $w=v$, $C_1 = C_0 \norm{u}_{L^1_{1/2}(\R^n)} $, and with $d$ and $S$ as defined above---, we obtain
	\begin{equation}
		|u(z') - u(\tilde{z}') | \leq |v(z',0) - v_S | + |v_S - v(\tilde{z}',0) | \leq C C_1 d^\alpha = C  C_0 \norm{u}_{L^1_{1/2}(\R^n)} |z' - \tilde{z}'|^\alpha
	\end{equation}
	for some constant $C$ depending only on $n$ and $\alpha$.
	Hence, 
	$$
	\seminorm{u}_{C^\alpha(\overline{B}_{ 1/8}) } \leq C \norm{u}_{L^1_{1/2}(\R^n)}.
	$$
	
	Now, we bound the $L^\infty$ norm of $u$ as follows. 
	For $x'$ and $z'$ in $B_{1/8}$, by the previous estimate we have
	$$
	|u(x')| \leq |u(x')-u(z')| + |u(z')| \leq C \norm{u}_{L^1_{1/2}(\R^n)} + |u(z')|.
	$$
	Integrating  with respect to $z'$ in $B_{1/8}$ we deduce
	$$
	\norm{u}_{L^\infty(B_{1/8})} \leq C \left( \norm{u}_{L^1_{1/2}(\R^n)} + \norm{u}_{L^1(B_{1/8})} \right ) \leq C \norm{u}_{L^1_{1/2}(\R^n)}.
	$$
	We conclude that
	$$
	\norm{u}_{C^\alpha(\overline{B}_{ 1/8})} \leq C \norm{u}_{L^1_{1/2}(\R^n)}.
	$$
	
	Finally, we prove the claimed estimate in $\overline{B}_{1/2}$ by using a standard covering and scaling argument.	
\end{proof}

Once the main theorem is proved, we can establish  \Cref{Coro:LinftyConvexDiriclet}.

\begin{proof}[Proof of \Cref{Coro:LinftyConvexDiriclet}]
	We may assume $u\not \equiv 0$, and thus $u> 0$ in $\Omega$ by the maximum principle.
	Now, on the one hand, since $\Omega$ is convex, by Proposition~1.8 of~\cite{RosOtonSerra-Extremal}, there exist two positive constants $\delta$ and $C_\Omega$, both depending only on $\Omega$, such that
	\begin{equation}
		\label{Eq:DeltaBoundaryEstimateConvex}
		\norm{u}_{L^\infty (\Omega\setminus K_{\delta})} \leq C_\Omega \norm{u}_{L^1(\Omega)},
	\end{equation}
	where $	K_\delta:=\{x'\in\Omega : \dist(x',\partial\Omega)\geq\delta\}$.
	
	On the other hand, by regularity theory (see \Cref{Remark:Regularity}), $u\in C^2(\Omega)$.
	Thus, we can use \Cref{Th:Holder}, that together with a covering and scaling argument leads to
	\begin{equation}
		\label{Eq:InteriorEstimateConvex}
		\norm{u}_{L^\infty (K_{\delta})} \leq C_\Omega \norm{u}_{L^1(\Omega)}  ,
	\end{equation}
	for some constant $C_\Omega$ depending only on $\Omega$ (to control the $L^1_{1/2}$ norm of $u$, recall that $u \equiv 0$ in $\R^n \setminus \Omega$).
	
	Combining both estimates we conclude the desired result.
\end{proof}

With \Cref{Coro:LinftyConvexDiriclet} at hand, we can establish our boundedness result for the extremal solution.

\begin{proof}[Proof of \Cref{Coro:Extremal}]
	
	For $\lambda < \lambda^\star$, let $u_\lambda$ be the minimal solution to \eqref{Eq:ExtremalProblem} with $\s=1/2$. 
	Since $u_\lambda \in L^\infty(\Omega) \cap H^{1/2}(\R^n)$ and it is a stable solution, from \Cref{Coro:LinftyConvexDiriclet} we obtain
	$$
	\norm{u_\lambda}_{L^\infty (\Omega)} \leq C_\Omega \norm{u_\lambda}_{L^1(\Omega)}.
	$$
	But now, since  $u^\star \in L^1(\Omega)$ and $0 \leq u_\lambda\leq u^\star$, the right-hand side of the previous inequality is bounded independently of $\lambda$.
	As a consequence, we deduce our result by letting $\lambda \to \lambda^\star$.
\end{proof}

We conclude the section by establishing our Liouville result. 

\begin{proof}[Proof of \Cref{Coro:Liouville}]
	First, note that if $\tilde{u}$ is a stable solution to $\halflaplacian \tilde{u} = f(\tilde{u})$ in $B_1$, and $\tilde{v}$ is its harmonic extension in $\R^{n+1}_+$, then 
	\begin{equation}
		\label{Eq:HolderEstimateL1Ext}
		[\tilde{u}]_{C^\alpha (\overline{B}_{1/2})} \leq C \norm{\tilde{v}}_{L^1(\B_{1}^+)} \quad \text{ if } 1 \leq n \leq 4,
	\end{equation}
	where $\alpha > 0$ and $C$ are dimensional constants.
	To see this, it is enough to carry out the  proof of \Cref{Th:Holder}, but using estimate \eqref{Eq:H1ControledByL1Ext} instead of \eqref{Eq:H^1Estimate} to control the $H^1$ seminorm of $\tilde{v}$ in \eqref{Eq:GrowthRadialDerivatives}.

	Our proof now follows that of  \cite{DupaigneFarina}. 
	Given $u$ as in the statement of the corollary, for each $R>2$ define $u_R(x') := u(Rx')$.
	Since the harmonic extension of $u_R$ in $\R^{n+1}_+$ is $v_R(x) := v(Rx)$, by applying the estimate \eqref{Eq:HolderEstimateL1Ext} to these rescaled functions (which are stable solutions to $\halflaplacian u_R = Rf(u_R)$ in $B_1$) we obtain
	\begin{equation}
		|u(x') - u(z') |\leq C R^{-\alpha} |x' - z'|^\alpha \fint_{\B_R^+} |v| \d x \quad \text{ for } x'  \text{ and }  z'  \text{ in } B_{R/2}.
	\end{equation}
	Note that here we use crucially that the estimate \eqref{Eq:HolderEstimateL1Ext}  does not depend on the nonlinearity.

	Now, we need to bound the average of $|v|$ in $\B_R^+$. 
	To do it, we claim first that there exists a positive constant $c_1$ for which
	\begin{equation}
		\label{Eq:LowerBoundvExt}
		v(x) \geq - c_0 ( \log |x| + c_1) \quad \text{ if } |x| \geq 1,
	\end{equation} 
	where $c_0$ is the constant appearing in the lower bound \eqref{Eq:LowerBoundu} for $u$.
	Assuming this claim to be true, we obtain
	\begin{equation}
		\begin{split}
			|v(x) | &\leq |v(x)  +  c_0 ( \log |x| + c_1)| + | c_0 ( \log |x| + c_1)| = v(x) + 2  c_0 ( \log |x| + c_1).
		\end{split}	
	\end{equation}
	Moreover, since $v$ is harmonic and has nonnegative flux on $B_1$, the mean value property
	\begin{equation}
		\fint_{\B_R^+} v \d x \leq v(0)
	\end{equation}
	holds.\footnote{This follows from the same usual proof of the mean value property for superharmonic functions in full balls, by adapting it to half-balls. Or, alternatively, by applying it to the even reflexion of $v$ across $\{y=0\}$ (which is superharmonic in the weak $H^1$ sense).
	}
	
	As a consequence, using the previous two bounds we get
	\begin{equation}
		\fint_{\B_R^+} |v| \d x \leq \fint_{\B_R^+} v \d x + 2 c_0 \left(\log R + c_1 \right) \leq v(0) + 2 c_0 \left(\log R + c_1 \right)
	\end{equation}
	and thus
	\begin{equation}
		|u(x') - u(z') |\leq C R^{-\alpha} |x' - z'|^\alpha \big(v(0) + 2 c_0 \left(\log R + c_1 \right) \big).
	\end{equation}	
	Letting $R\to +\infty$ we deduce that $u$ is constant.
	
	It remains to establish \eqref{Eq:LowerBoundvExt}.
	We use the Poisson kernel \eqref{Eq:vPoisson} to express $v$ in terms of $u$, together with the lower bound \eqref{Eq:LowerBoundu}. For $x=(x',y)\in\R^{n+1}_+$ with $|x|\geq 1$, we see that
	\begin{equation}
		\begin{split}
			v(x) &\geq -c_0 \, p_{n,1/2} \int_{\R^n} \dfrac{y \log (2 + |z'|)}{(|x'-z'|^2 + y^2)^{\frac{n+1}{2}}} \d z'
			\\	&\geq  -c_0  \, p_{n,1/2} \int_{\R^n} \dfrac{y \log (|x|(2 +  |z'|/|x|))}{(|x'-z'|^2 + y^2)^{\frac{n+1}{2}}} \d z' \\
			&=  - c_0 \left ( \log|x| +  p_{n,1/2} \int_{\R^n} \dfrac{y \log (2 +  |z'|/|x|)}{(|x'-z'|^2 + y^2)^{\frac{n+1}{2}}} \d z' \right ) \\
			&=  - c_0 \left ( \log|x| +   p_{n,1/2}\, \dfrac{y}{|x|}  \int_{\R^n} \dfrac{ \log (2 +  |\tilde{z}'|)}{(|x'/|x|-\tilde{z}'|^2 + (y/|x|)^2)^{\frac{n+1}{2}}} \d \tilde{z}' \right ) \\
			&=: - c_0 \left ( \log|x| +  p_{n,1/2}\,\varphi (x/|x|) \right).
		\end{split}
	\end{equation}
	
	Finally, let us show that the function $\varphi:\Sph^n_+\to\R$ defined above is bounded.
	To do this, note first that $\varphi=\varphi(x/|x|)$ does not depend on $x'/|x|$ and can be written as 
	\begin{equation}
		\tilde{y} \int_{\R^n} \dfrac{ \log (2 +  |\tilde{z}'|)}{(|e_1-\tilde{z}'|^2 + \tilde{y}^2)^{\frac{n+1}{2}}} \d \tilde{z}' ,
	\end{equation}
	where $\tilde{y}:=y/|x|\in (0,1]$. Since $|\tilde{y}|\leq 1$, it suffices to control this quantity when integrating only in $\{|e_1-\tilde{z}'| <1\}$. But we have 	
	\begin{equation}
		\tilde{y} \int_{B_1(e_1)} \dfrac{ \log (2 +  |\tilde{z}'|)}{(|e_1-\tilde{z}'|^2 + \tilde{y}^2)^{\frac{n+1}{2}}} \d \tilde{z}' \leq C	\int_0^1  \dfrac{\tilde{y} \rho^{n-1}}{(\rho^2 + \tilde{y}^2)^{\frac{n+1}{2}} }  \d \rho   \leq 
		C 	\int_0^{+\infty}   \dfrac{ t^{n-1}}{(t^2 + 1)^{\frac{n+1}{2}} }  \d t < +\infty.
	\end{equation} 
	This establishes the claim \eqref{Eq:LowerBoundvExt} and concludes the proof.	
\end{proof}

\section{Adding artificial variables}
\label{Sec:AddDimensions}

In this section we present the two results that allowed us to establish the Hölder estimate of \Cref{Th:Holder} in dimension $n=1$ from the estimate in higher dimensions, by adding artificial variables.
In this section, points in $\R^m = \R^{m-1} \times \R$ will be denoted by $x=(x',x_m)$.

The first result concerns the control of the $L^1_\s$ norm when adding artificial variables.

\begin{lemma}
	\label{Lemma:AddDimensionHsL1s}
	
	Let $m\geq 2$ and let $u: \R^{m-1} \to \R$ belong to $ L^1_\s(\R^{m-1})$.
	Define $w: \R^m \to \R$ by
	$$
	w(x',x_m) := u(x').
	$$
	
	Then,
	$$
	\norm{w}_{L^1_\s(\R^m)} \leq C \norm{u}_{L^1_\s(\R^{m-1})}
	$$
	for some constant $C$ depending only on $m$ and $\s$.
\end{lemma}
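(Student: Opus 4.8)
The plan is a direct computation based on Tonelli's theorem and a one-dimensional rescaling. Writing points of $\R^m = \R^{m-1}\times \R$ as $x=(x',x_m)$ and using that $w(x',x_m)=u(x')\geq 0$ can be assumed nonnegative (replace $u$ by $|u|$), Tonelli's theorem gives
\begin{equation}
	\norm{w}_{L^1_\s(\R^m)}
	= \int_{\R^{m-1}} |u(x')| \left( \int_{\R} \dfrac{\d x_m}{\big(1 + |x'|^2 + x_m^2\big)^{\frac{m + 2\s}{2}}} \right) \d x'.
\end{equation}

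The key step is to evaluate the inner integral. Fix $x'\in\R^{m-1}$ and set $a:=(1+|x'|^2)^{1/2}\geq 1$. Substituting $x_m = a\,t$ (so $\d x_m = a\,\d t$) yields
\begin{equation}
	\int_{\R} \dfrac{\d x_m}{\big(a^2 + x_m^2\big)^{\frac{m + 2\s}{2}}}
	= a^{1-(m+2\s)} \int_{\R} \dfrac{\d t}{(1+t^2)^{\frac{m+2\s}{2}}}
	= C_{m,\s}\,(1+|x'|^2)^{-\frac{(m-1)+2\s}{2}},
\end{equation}
where $C_{m,\s}:=\int_{\R}(1+t^2)^{-(m+2\s)/2}\,\d t$. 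This constant is finite because $m+2\s>1$ (indeed $m\geq 2$), so the integrand decays like $|t|^{-(m+2\s)}$ at infinity with $m+2\s>1$.

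Combining the two displays we obtain in fact the identity
\begin{equation}
	\norm{w}_{L^1_\s(\R^m)}
	= C_{m,\s} \int_{\R^{m-1}} \dfrac{|u(x')|}{(1+|x'|^2)^{\frac{(m-1)+2\s}{2}}}\, \d x'
	= C_{m,\s}\,\norm{u}_{L^1_\s(\R^{m-1})},
\end{equation}
which is the asserted inequality with $C=C_{m,\s}$, a constant depending only on $m$ and $\s$. There is no real obstacle here: the only point requiring a remark is the convergence of the one-dimensional integral $C_{m,\s}$, which is guaranteed by the hypothesis $m\geq 2$.
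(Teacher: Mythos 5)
Your proof is correct, and it takes a cleaner route than the paper's. The paper decomposes $\R^m$ into three pieces --- the unit cube $Q^m$, the set $\{|x_m|<1,\ x'\notin Q^{m-1}\}$ (giving $I_1$), and the set $\{|x_m|>1\}$ (giving $I_2$) --- and bounds each separately, invoking the one-dimensional scaling $x_m\mapsto \lambda t$ only for $I_2$. You instead apply Tonelli once and compute the inner $x_m$-integral \emph{exactly} via the substitution $x_m=(1+|x'|^2)^{1/2}t$, which collapses the weight precisely to the $(m-1)$-dimensional one. This yields the identity
\begin{equation}
	\norm{w}_{L^1_\s(\R^m)} = C_{m,\s}\,\norm{u}_{L^1_\s(\R^{m-1})}, \qquad C_{m,\s}:=\int_\R (1+t^2)^{-\frac{m+2\s}{2}}\,\d t,
\end{equation}
so you obtain the sharp constant rather than just an upper bound. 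The finiteness of $C_{m,\s}$ is immediate since $m+2\s>1$ (indeed $m\geq 1$ and $\s>0$ would already suffice). The only small thing worth stating explicitly is that Tonelli is applicable because the integrand is nonnegative, which you do address by replacing $u$ with $|u|$. In short: same norm, same definition, but your single-step computation avoids all of the paper's casework and strengthens the conclusion to an equality.
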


\begin{proof}	
	Let $Q^m = (-1,1)^m \subset \R^m$.
	We have
	$$
	\norm{w}_{L^1_\s(\R^m)} = \int_{Q^m} \dfrac{|u(x')|}{(1 + |x'|^2 + x_m^2)^{\frac{m + 2\s}{2}}} \d x +  \int_{\R^m \setminus Q^m} \dfrac{|u(x')|}{(1 + |x'|^2 + x_m^2)^{\frac{m + 2\s}{2}}} \d x.
	$$
	The first integral is bounded easily using that $Q^m = Q^{m-1} \times (-1,1)$:
	$$
	\int_{Q^m} \dfrac{|u(x')|}{(1 + |x'|^2 + x_m^2)^{\frac{m + 2\s}{2}}} \d x \leq 2 \int_{Q^{m-1}} |u(x')|\d x' \leq C \norm{u}_{L^1_\s(\R^{m-1})}.
	$$
	To estimate the second one, we split it further:
	$$
	\int_{\R^m \setminus Q^m} \dfrac{|u(x')|}{(1 + |x'|^2 + x_m^2)^{\frac{m + 2\s}{2}}} \d x = I_1 + I_2,
	$$ 
	where
	$$
	I_1 := \int_{-1}^1 \df x_m \int_{\R^{m-1} \setminus Q^{m-1}} \df x' \ \dfrac{|u(x')|}{(1 + |x'|^2 + x_m^2)^{\frac{m + 2\s}{2}}}   
	$$
	and 
	$$
	I_2 := 2 \int_1^{+\infty} \df x_m \int_{\R^{m-1}} \df x' \ \dfrac{|u(x')|}{(1 + |x'|^2 + x_m^2)^{\frac{m + 2\s}{2}}}   .
	$$
	
	Now, on the one hand
	\begin{align}
		I_1 & \leq 2 \int_{\R^{m-1} \setminus Q^{m-1}}  \dfrac{|u(x')|}{(1 + |x'|^2 )^{\frac{m - 1 + 2\s}{2}}}   \d x' \leq C \norm{u}_{L^1_\s(\R^{m-1})}.
	\end{align}
	On the other hand, 
	\begin{align}
		I_2 & =  2 \int_{\R^{m-1}} \df x' \ |u(x')|  \int_1^{+\infty}  \dfrac{\df x_m }{(1 + |x'|^2 + x_m^2)^{\frac{m + 2\s}{2}}},
	\end{align}
	and note that, for $\lambda >0$,
	\begin{align}
		\int_1^{+\infty}  \dfrac{\df x_m }{(\lambda^2 + x_m^2)^{\frac{m + 2\s}{2}}}  &= \dfrac{1}{\lambda^{m + 2\s}} \int_1^{+\infty}  \dfrac{\df x_m }{(1 + (x_m/\lambda) ^2)^{\frac{m + 2\s}{2}}} \\
		& \leq \dfrac{1}{\lambda^{m - 1 + 2\s}} \int_{0}^{+\infty}  \dfrac{\df t }{(1 + t ^2)^{\frac{m + 2\s}{2}}} \leq \dfrac{C}{\lambda^{m - 1 + 2\s}}.
	\end{align}
	This yields $I_2 \leq C \norm{u}_{L^1_\s(\R^{m-1})}$ (taking $\lambda = \sqrt{1 + |x'|^2}$), which concludes the proof.	
\end{proof}

Now we prove that  stability is preserved after the addition of artificial variables.
Here we denote by $B_1^m$ the unit ball in $\R^m$.

\begin{lemma}
	\label{Lemma:AddDimensionEquation}
	Let $m\geq 2$ and let $u: \R^{m-1} \to \R$, with $u\in C^2(B_1^{m-1})\cap L^1_\s(\R^{m-1})$, be a  stable solution of $\fraclaplacian u = f(u)$ in $B_1^{m-1}$. 
	Define $w: \R^m \to \R$ by
	$$
	w(x',x_m) := u(x').
	$$
	
	Then, $w\in C^2(B_1^{m})\cap L^1_\s(\R^{m})$ and it is a stable solution of $\fraclaplacian w = f(w)$ in $B_1^m$.
\end{lemma}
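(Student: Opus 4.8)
The plan is to reduce everything to statements about the fractional Laplacian and the stability functional that are insensitive to the addition of a dummy variable. First I would verify the regularity and integrability: $w\in C^2(B_1^m)$ is immediate since $w$ is just $u$ composed with the projection $(x',x_m)\mapsto x'$, so all derivatives of $w$ up to second order are bounded on $B_1^m$ in terms of those of $u$ on $B_1^{m-1}$; and $w\in L^1_\s(\R^m)$ is exactly the content of \Cref{Lemma:AddDimensionHsL1s}. Next, the pointwise identity $\fraclaplacian w(x',x_m) = \fraclaplacian u(x')$ must be checked. Using the definition \eqref{Eq:DefFracLap} of the fractional Laplacian in $\R^m$, one splits the singular integral over $\R^m = \R^{m-1}\times\R$ and integrates in the last variable first: for fixed $x'$, since $w(x',x_m)-w(z',z_m) = u(x')-u(z')$ is independent of $z_m$, the inner integral $\int_\R \frac{\d z_m}{(|x'-z'|^2 + (x_m-z_m)^2)^{(m+2\s)/2}}$ equals $c\,|x'-z'|^{-(m-1+2\s)}$ by the substitution $z_m = x_m + |x'-z'| t$. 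Tracking the normalizing constants (using the known relation between $c_{m,\s}$ and $c_{m-1,\s}$), this gives $\fraclaplacian w (x',x_m) = c_{m-1,\s}\int_{\R^{m-1}}\frac{u(x')-u(z')}{|x'-z'|^{m-1+2\s}}\d z' = \fraclaplacian u(x') = f(u(x')) = f(w(x',x_m))$ in $B_1^m$. So $w$ solves the equation.

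The remaining point is stability. I would use the extension characterization, or rather keep everything in terms of the seminorm $[\cdot]_{H^\s}$. Given $\xi\in H^\s(\R^m)$ with compact support in $B_1^m$, I want to show $\int_{B_1^m} f'(w)\xi^2 \le [\xi]_{H^\s(\R^m)}^2$. The natural strategy: for each fixed value of the extra variable, or rather by an averaging/slicing argument, test the stability of $u$ with the slices $\xi(\cdot,x_m)$. More precisely, for a.e. $x_m\in(-1,1)$ the function $x'\mapsto \xi(x',x_m)$ lies in $H^\s(\R^{m-1})$ with support in $B_1^{m-1}$ (after a harmless cutoff in $x_m$), so stability of $u$ gives
\begin{equation}
\int_{\R^{m-1}} f'(u(x'))\,\xi(x',x_m)^2\,\d x' \le [\xi(\cdot,x_m)]_{H^\s(\R^{m-1})}^2 .
\end{equation}
Integrating this inequality over $x_m\in\R$ yields $\int_{\R^m} f'(w)\xi^2 \le \int_\R [\xi(\cdot,x_m)]_{H^\s(\R^{m-1})}^2\,\d x_m$, so it remains to prove that the right-hand side is bounded by $[\xi]_{H^\s(\R^m)}^2$. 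This is a Fubini-type comparison of seminorms: $\int_\R [\xi(\cdot,x_m)]_{H^\s(\R^{m-1})}^2 \d x_m = \frac{c_{m-1,\s}}{2}\int_\R\int\int \frac{|\xi(x',x_m)-\xi(z',x_m)|^2}{|x'-z'|^{m-1+2\s}}$, and one checks this is controlled by $[\xi]_{H^\s(\R^m)}^2$ — again by first doing the inner $z_m$-integral using the same kernel computation as above, which shows that integrating the $\R^m$ Gagliardo kernel against increments that happen to be $z_m$-independent reproduces exactly (up to the constant already accounted for) the lower-dimensional kernel, while the full $\R^m$ seminorm also picks up the genuinely $(m)$-dimensional increments $|\xi(x',x_m)-\xi(z',z_m)|^2$ with $z_m\ne x_m$, which only adds a nonnegative amount. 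An equivalent and perhaps cleaner route is to pass to the extension: if $V$ is the $\s$-harmonic extension of $w$ in $\R^{m+1}_+$, then by uniqueness $V(x',x_m,y) = v(x',y)$ where $v$ is the extension of $u$; then test \eqref{Eq:StabilityExtension} in $\R^{m+1}_+$ with $\xi$ and integrate out the $x_m$ variable, using the one-dimensional slice version of \eqref{Eq:StabilityExtension} for $u$ — here $|\nabla_{(x',x_m,y)}\xi|^2 \ge |\nabla_{(x',y)}\xi|^2$ pointwise, which is precisely the inequality that makes the argument go through.

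The main obstacle I anticipate is purely bookkeeping rather than conceptual: getting the normalizing constants $c_{n,\s}$ and $d_\s$ exactly right so that the slicing identities become genuine equalities (or at least inequalities in the correct direction), since stability is a sharp inequality and a wrong constant would break it. I would handle this by working with the extension formulation \eqref{Eq:StabilityExtension}, where the relevant constant $d_\s$ is dimension-independent up to the explicit trace-inequality normalization, so the slicing reduces to the elementary pointwise bound $|\nabla_{\R^{m+1}}\xi|^2 \ge |\nabla_{\R^m}\xi|^2$ together with Fubini; there is no constant to track at all in that approach, only the observation that $V$ depends trivially on $x_m$ and hence $\partial_{x_m}V\equiv 0$, so testing stability of $u$ slicewise and integrating loses nothing on the left and can only gain on the right. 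A secondary minor point is to justify the slicewise membership $\xi(\cdot,x_m)\in H^\s(\R^{m-1})$ for a.e. $x_m$ with uniformly compact support, which follows from $\xi\in H^\s(\R^m)$ having compact support in $B_1^m$ by a standard Fubini argument on the Gagliardo seminorm. Putting these pieces together establishes that $w$ is a stable solution of $\fraclaplacian w = f(w)$ in $B_1^m$, completing the proof.
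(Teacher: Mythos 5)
Your extension route is correct and close in spirit to the paper's proof, though the mechanism differs. The paper forms the averaged function $\overline{\xi}(x',y) := (\int_\R \xi^2(x',x_m,y)\,\d x_m)^{1/2}$, applies the stability inequality \eqref{Eq:StabilityExtension} for $u$ once with the single test function $\overline{\xi}$, and then bounds $|\nabla_{(x',y)}\overline{\xi}|^2 \le \int_\R |\nabla\xi|^2\,\d x_m$ by Cauchy--Schwarz on $\{\overline{\xi}\ne 0\}$, supplemented by the remark that $\nabla\overline{\xi}=0$ a.e.\ on $\{\overline{\xi}=0\}$ since $\overline{\xi}$ is Lipschitz. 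You instead slice: apply the stability of $u$ to $\xi(\cdot,x_m,\cdot)$ for each $x_m$, integrate over $x_m$, and use the elementary pointwise bound $|\nabla_{(x',x_m,y)}\xi|\ge|\nabla_{(x',y)}\xi|$ together with Fubini. Your slicing avoids both the Cauchy--Schwarz computation and the Lipschitz--zero-set technicality, at the cost of applying stability infinitely often (once per slice), which is harmless. Both versions are valid, and yours is arguably slightly more elementary.

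One caveat about your first, ``downstairs'' route: the seminorm inequality $\int_\R [\xi(\cdot,x_m)]^2_{H^\s(\R^{m-1})}\,\d x_m \le [\xi]^2_{H^\s(\R^m)}$ is true, but your justification --- that the full $\R^m$ Gagliardo seminorm ``also picks up the genuinely $m$-dimensional increments, which only adds a nonnegative amount'' --- is not a proof. The diagonal $\{z_m=x_m\}$ has measure zero in the $(x,z)$-integral, so the $\R^m$ seminorm does not decompose as a lower-dimensional piece plus a nonnegative remainder in the way suggested; after the $z_m$-integration the kernel and the normalizing constant genuinely change. A clean justification of this comparison uses the Fourier characterization (Plancherel in $x_m$ and the trivial bound $|\zeta'|^{2\s}\le|\zeta|^{2\s}$) or, as you ultimately do, passes to the extension. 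Since you explicitly commit to the extension formulation to ``handle the constants,'' the overall argument stands, but the downstairs reasoning as phrased would not.
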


\begin{proof}
	First, that $w\in L^1_\s(\R^{m})$ follows from \Cref{Lemma:AddDimensionHsL1s}, and a straightforward computation shows that $w$ solves $\fraclaplacian w = f(w)$ in $B_1^m$ (see for instance \cite[Lemma~2.1]{RosOtonSerra-GeneralBoundaryRegularity}).	
	Now, let us check that $w$ is stable. 
	We will show that the stability inequality \eqref{Eq:StabilityExtension} holds for all $C^\infty$ functions $\xi = \xi (x,y)$ with compact support in $\R^{m+1}_+ \cup B_1^{m}$. 
	Then, the result will follow by density.

	Given such a function $\xi$, we define
	$$
	\overline{\xi}^2(x',y) := \int_{\R} \xi^2(x', x_m, y) \d x_m .
	$$
	We have
	\begin{align}
		\int_{ B_1^m } f'(w(x)) \xi^2(x, 0) \d x &= \int_{ B_1^{m-1} } \df x' f'(u(x')) \int_{\R} \df x_m \, \xi^2(x', x_m, 0) \\
		&=  \int_{ B_1^{m-1} } f'(u(x')) \overline{\xi}^2(x',0) \d x',
	\end{align}
	and using the stability of $u$ in $B_1^{m-1}$, we deduce
	\begin{equation}
		\label{Eq:DimAddStability}
		\int_{ B_1^m } f'(w) \xi^2 \d x \leq d_\s \int_0^{+\infty} \df y\ y^{1 - 2\s} \int_{\R^{m-1}} \df x' \ |\nabla_{(x'\!,y)} \overline{\xi} (x', y) |^2.
	\end{equation}
	
	Now, for $(x',y) \in \{ \overline \xi \neq 0\}$ and $i = 1, \ldots, m-1$ we have
	$$
	\overline{\xi}_{x_i}(x',y) = \overline{\xi} (x',y)^{-1} \int_{\R} \xi (x', x_m,y) \xi_{x_i} (x', x_m,y)\d x_m,
	$$
	and the same holds for the derivative with respect to the extension variable $y$.
	Using the Cauchy-Schwarz inequality we see that
	$$
	\overline{\xi}_{x_i}^2 (x',y) \leq  \int_{\R} \xi_{x_i}^2(x', x_m,y)\d x_m,
	$$
	and similarly for $\overline{\xi}_{y}^2$.
	Hence
	$$
	|\nabla_{(x'\!,y)} \overline{\xi} (x', y) |^2 \leq \int_{\R} |\nabla \xi (x', x_m,  y) |^2 \d x_m .
	$$
	Using this in \eqref{Eq:DimAddStability}, and the fact that since $\overline \xi$ is Lipschitz, $\nabla \overline \xi = 0$ a.e. in $\{ \overline \xi = 0\}$ (see for instance~\cite[Theorem~6.19]{LiebLoss}), we obtain that
	$$
	\int_{ B_1^m } f'(w) \xi^2 \d x \leq d_\s \int_0^{+\infty} \df y \ y^{1 - 2\s}  \int_{\R^m} \df x \  |\nabla \xi (x,  y) |^2.
	$$
	This establishes the stability of $w$.	
\end{proof}

\appendix
\gdef\thesection{\Alph{section}} 
\makeatletter
\renewcommand\@seccntformat[1]{Appendix \csname the#1\endcsname.\hspace{0.5em}}
\makeatother

\section{Auxiliary regularity lemmata}
\label{Sec:PreliminarEstimates}

In this section we collect some auxiliary results used along the paper.
Recall that $a := 1-2\s$ and that the $\s$-harmonic extension of a function $u:\R^n \to \R$ is iven by 
\begin{equation}
	\label{Eq:vPoisson}
	v(x',y) = \int_{\R^n} u(z') P_\s(x'-z',y) \d z', \quad \text{ with } \quad 
	P_\s (\bar{x}', y) = p_{n,\s} \dfrac{y^{2\s} }{(|\bar{x}'|^2 + y^2)^\frac{n + 2\s}{2}}.
\end{equation}
Here, $p_{n,\s}$ is a normalizing positive constant, depending only on $n$ and $\s$, which makes $P_\s(\cdot, y)$ integrate~$1$ in $\R^n$ for every $y > 0$.

We first present a result stating that the weighted $L^p$ norm of the $\s$-harmonic extension of a function can be controlled in terms of $L^p$ and $L^1_\s$ norms of its trace.
It is a simple application of Young's convolution inequality.

\begin{lemma}
	\label{Lemma:LpExtension}
	Let $n\geq 1$, $\s \in (0,1)$, $p\in [1,+\infty)$, $R>0$, and $u \in L^p (B_{2R}) \cap L^1_\s (\R^n)$.
	Let $a = 1-2\s$ and let $v$ be the $\s$-harmonic extension of $u$.
	
	Then, 
	$$
	\norm{v}_{L^p (\B_R^+, y^a)} \leq C_R \left(  \norm{u}_{L^p (B_{2R})} + \norm{u}_{L^1_\s (\R^n)} \right)
	$$
	for some constant $C_R$ depending only on $n$, $\s$, $p$, and $R$.
\end{lemma}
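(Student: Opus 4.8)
The plan is to reduce the $L^p$ estimate to a direct estimate on the Poisson convolution, splitting the trace $u$ into a near part (supported in $B_{2R}$) and a far part (the rest of $\R^n$), and controlling each piece separately. First I would fix $x = (x',y) \in \B_R^+$, so that $|x'| < R$ and $0 < y < R$, and write $u = u \chi_{B_{2R}} + u \chi_{\R^n \setminus B_{2R}} =: u_1 + u_2$, with $v = v_1 + v_2$ the corresponding harmonic extensions via \eqref{Eq:vPoisson}.

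For the near part $v_1$, the idea is to view $v_1(\cdot, y) = u_1 * P_\s(\cdot, y)$ as a convolution in the $x'$ variable and apply Young's convolution inequality: $\norm{v_1(\cdot,y)}_{L^p(\R^n)} \leq \norm{u_1}_{L^p(\R^n)} \norm{P_\s(\cdot,y)}_{L^1(\R^n)} = \norm{u}_{L^p(B_{2R})}$, since $P_\s(\cdot,y)$ integrates to $1$. Then $\norm{v_1}_{L^p(\B_R^+, y^a)}^p \leq \int_0^R y^a \norm{u}_{L^p(B_{2R})}^p \,\d y = \frac{R^{a+1}}{a+1}\norm{u}_{L^p(B_{2R})}^p$ (note $a + 1 = 2\s > 0$ so the $y$-integral converges), which is the desired bound for this piece with a constant depending on $R$, $\s$.

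For the far part $v_2$, I would estimate pointwise: for $x \in \B_R^+$ and $z' \notin B_{2R}$ one has $|x' - z'| \geq |z'| - |x'| \geq |z'| - R \geq |z'|/2$ (using $|z'| \geq 2R$), and also $|x'-z'|^2 + y^2 \geq |x'-z'|^2 \geq |z'|^2/4$, so $P_\s(x'-z',y) = p_{n,\s} y^{2\s}(|x'-z'|^2+y^2)^{-(n+2\s)/2} \leq C_{n,\s}\, R^{2\s} (1+|z'|^2)^{-(n+2\s)/2}$ for $|z'| \geq 2R$ (absorbing the comparison between $|z'|^2/4$ and $1 + |z'|^2$ into the constant, valid for $|z'|$ bounded below). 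Hence $|v_2(x)| \leq C_{n,\s} R^{2\s} \int_{\R^n \setminus B_{2R}} \frac{|u(z')|}{(1+|z'|^2)^{(n+2\s)/2}}\,\d z' \leq C_{n,\s} R^{2\s}\norm{u}_{L^1_\s(\R^n)}$, a constant (in $x$). Integrating $|v_2|^p$ against $y^a$ over the bounded set $\B_R^+$ gives $\norm{v_2}_{L^p(\B_R^+,y^a)} \leq C_R \norm{u}_{L^1_\s(\R^n)}$. Combining the two pieces via the triangle inequality yields the claim.

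I do not expect a genuine obstacle here; the only mild care needed is tracking that all $R$-dependent constants remain finite, which is automatic since $\B_R^+$ has finite $y^a\,\d x$ measure (as $2\s > 0$) and the Poisson kernel is an $L^1$-normalized approximate identity. One could alternatively handle everything at once by noting $v = u * P_\s(\cdot,y)$ globally and using Young directly, but the far/near split makes the role of the $L^1_\s$ norm transparent, since $P_\s(\cdot, y)$ restricted to large $|z'|$ decays exactly like the weight $(1+|z'|^2)^{-(n+2\s)/2}$ appearing in the definition \eqref{Eq:DefL1s}.
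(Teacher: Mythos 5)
Your proposal is correct and follows essentially the same argument as the paper: the same near/far decomposition $u = u\chi_{B_{2R}} + u\chi_{\R^n\setminus B_{2R}}$, Young's convolution inequality for the near piece, and the same pointwise Poisson kernel bound on the far region (the paper's \eqref{Eq:PoissonBound}). The only cosmetic difference is that you bound $y^{2\s}\leq R^{2\s}$ at the start rather than carrying the $y^{2\s}$ factor into the final $y$-integration, which changes nothing.
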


\begin{proof}
	First, note that
	\begin{align}
		\int_{\B_{R}^+} y^a |v|^p \d x 
		&\leq  \int_0^R \df y \, y^a\int_{B_{R}} \df x' \left| \int_{\R^n} u(z')  P_\s(x'-z',y)\d z' \right|^p \\
		& \leq 2^p \int_0^R \df y \, y^a \int_{B_{R}} \df x' \left| \int_{B_{2R}} u(z')  P_\s(x'-z',y)\d z' \right|^p \\
		& \quad \quad + 2^p \int_0^R\df y  \, y^a  \int_{B_{R}} \df x' \left| \int_{\R^n\setminus B_{2R}} u(z')  P_\s(x'-z',y)\d z' \right|^p.
	\end{align}
	Now, on the one hand, defining $\widetilde{u} := u \chi_{B_{2R}}$, for $y>0$ we have
	\begin{align}
		\int_{B_{R}} \df x' \left| \int_{B_{2R}} u(z')  P_\s(x'-z',y)\d z' \right|^p 
		&= \int_{B_{R}}  |\widetilde{u} * P_\s(\cdot, y) |^p (x') \d x' \\
		& \leq \norm{\widetilde{u} * P_\s(\cdot, y) }_{L^p(\R^n)}^p \\
		& \leq \norm{\widetilde{u} }_{L^p(\R^n)}^p \norm{ P_\s(\cdot, y) }_{L^1(\R^n)}^p  =  \norm{ u }_{L^p(B_{2R})}^p,
	\end{align}
	where we have used Young's convolution inequality.
	On the other hand, we claim that 
	\begin{equation}
		\label{Eq:PoissonBound}
		P_\s(x'-z',y) \leq \overline{C}_R \dfrac{y^{2\s}}{(1 + |z'|^2)^{\frac{n+ 2\s}{2}}} \quad \text{if } x'\in B_{R} \text{ and } z' \in \R^n \setminus B_{2R},
	\end{equation}
	where the constant $\overline{C}_R$ depends only on $n$, $\s$, and $R$. 
	Indeed, since $|x'| \leq |z'|/2$, we have that
	\begin{equation}
		1 + |z'|^2 \leq  \dfrac{1 + 4R^2}{R^2} \dfrac{|z'|^2}{4} \leq  \dfrac{1 + 4R^2}{R^2} |x'-z'|^2  \leq  \dfrac{1 + 4R^2}{R^2} (|x'-z'|^2 + y^2).
	\end{equation}
	Therefore
	$$
	\int_{B_{R}} \df x' \left| \int_{\R^n\setminus B_{2R}} u(z')  P_\s(x'-z',y)\d z' \right|^p \leq C_R y^{2\s p}   \norm{u}_{L^1_\s(\R^n)}^p, 
	$$
	with $C_R$ depending only on $n$, $\s$, $p$, and $R$. 
	
	The desired estimate follows from the fact that $y^a$ is integrable in $(0,R)$.
\end{proof}

We next establish an $L^\infty$ estimate for the $\s$-harmonic extension of a function, as well as for its first and second horizontal derivatives (i.e., its derivatives in $x'\in \R^n$).
The result will be used to prove \Cref{Lemma:RegularityHorizontalGrad} below.

\begin{lemma}
	\label{Lemma:EstimatesHalfBallsExtension}
	Let $n\geq 1$, $\s\in (0,1)$, $R>0$, and $u \in C^2(\overline{B}_{4R}) \cap L^1_\s(\R^n)$.
	Let $v$ be the $\s$-harmonic extension of $u$. 
	
	Then, 
	\begin{equation}
		\norm{v}_{L^\infty({\B}_{R}^+)} + \norm{\nabla_{x'}v}_{L^\infty({\B}_{R}^+)} + \norm{D^2_{x'} v}_{L^\infty({\B}_{R}^+)} \leq C_R \bpar{\norm{u}_{C^2(\overline{B}_{4R})} + \norm{u}_{L^{1}_\s(\R^n)}}
	\end{equation}
	for a constant $C_R$ depending only on $n$, $\s$, and $R$.	
\end{lemma}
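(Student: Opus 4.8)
The plan is to work directly with the Poisson representation \eqref{Eq:vPoisson} and to separate the contribution of $u$ near $B_{2R}$ from its contribution far away, much as in the proof of \Cref{Lemma:LpExtension}. The only delicate point is that, in order to estimate $\nabla_{x'}v$ and $D^2_{x'}v$, one cannot afford to let the horizontal derivatives act on the Poisson kernel in the near region, since $\norm{\nabla_{x'}P_\s(\cdot,y)}_{L^1(\R^n)}$ blows up as $y\downarrow 0$. This is circumvented by first freezing a $C^2_c$ extension of $u|_{B_{2R}}$ and letting the derivatives act on it through the convolution, so that only a harmless $L^1$-normalized mass of the kernel is used.

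Concretely, I would fix a cut-off $\chi\in C^\infty_c(B_{4R})$ with $0\le\chi\le1$, $\chi\equiv1$ on $B_{2R}$, and $\norm{\chi}_{C^2(\R^n)}\le C_R$, and set $\widetilde u:=\chi u\in C^2_c(\R^n)$, so that $\norm{\widetilde u}_{C^2(\R^n)}\le C_R\norm{u}_{C^2(\overline{B}_{4R})}$. Let $\widetilde v(x',y):=(\widetilde u*P_\s(\cdot,y))(x')$ be its $\s$-harmonic extension. Since $P_\s(\cdot,y)\ge0$ integrates to $1$ and horizontal derivatives commute with the convolution, $\partial^\beta_{x'}\widetilde v=(\partial^\beta_{x'}\widetilde u)*P_\s(\cdot,y)$ for $|\beta|\le2$, whence by Young's convolution inequality
\[
\norm{\widetilde v}_{L^\infty(\R^{n+1}_+)}+\norm{\nabla_{x'}\widetilde v}_{L^\infty(\R^{n+1}_+)}+\norm{D^2_{x'}\widetilde v}_{L^\infty(\R^{n+1}_+)}\le \norm{\widetilde u}_{C^2(\R^n)}\le C_R\,\norm{u}_{C^2(\overline{B}_{4R})}.
\]

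It then only remains to estimate $v-\widetilde v$ on $\B_R^+$. Since $u=\widetilde u$ on $B_{2R}$ and $|u-\widetilde u|\le|u|$, for $x'\in B_R$ one has $(v-\widetilde v)(x',y)=\int_{\R^n\setminus B_{2R}}(u-\widetilde u)(z')\,P_\s(x'-z',y)\,\d z'$, an integral over a set at distance at least $R$ from $x'$, so one may freely differentiate under the integral sign in $x'$. Using the pointwise decay \eqref{Eq:PoissonBound} for $P_\s$ — together with its analogues for $\nabla_{x'}P_\s$ and $D^2_{x'}P_\s$, which on $\{x'\in B_R,\ z'\notin B_{2R}\}$ carry the extra factors $|x'-z'|^{-1}\le R^{-1}$ and $|x'-z'|^{-2}\le R^{-2}$ — and the elementary bound $y\le R$ valid on $\B_R^+$, one gets
\[
\norm{v-\widetilde v}_{L^\infty(\B_R^+)}+\norm{\nabla_{x'}(v-\widetilde v)}_{L^\infty(\B_R^+)}+\norm{D^2_{x'}(v-\widetilde v)}_{L^\infty(\B_R^+)}\le C_R\,R^{2\s}\,\norm{u}_{L^1_\s(\R^n)}.
\]
Adding the two displays yields the asserted estimate. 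I do not anticipate a genuine obstacle here: once the cut-off device of the second paragraph is in place the argument is a routine Poisson-kernel computation, and the only points requiring care are the justification of differentiation under the integral sign in the far region and the bookkeeping of the constants $\overline C_R$ in the kernel estimates.
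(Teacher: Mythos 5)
Your proposal is correct and follows essentially the same route as the paper's proof: after a change of variables, the paper's splitting $u=\zeta u+(1-\zeta)u$ in \eqref{Eq:vPoissonSplitCutoff} is exactly your $v=\widetilde v+(v-\widetilde v)$, with the near part handled via Young's inequality by putting the horizontal derivatives on $\zeta u$ rather than on $P_\s$, and the far part handled via \eqref{Eq:PoissonBound} together with the bound $|\nabla_{x'}P_\s|+|D^2_{x'}P_\s|\le C_R\,P_\s$ on $\{|x'-z'|\ge R\}$.
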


\begin{proof}	
	Let $\zeta: \R^n \to [0,1]$ be a smooth cut-off function such that $\zeta \equiv 1$ in $B_{2R}$ and $\zeta \equiv 0$ in $\R^n \setminus B_{4R}$.
	Then, writing $u = \zeta u + (1-\zeta) u$, for $(x',y) \in \B^+_R$ we have
	\begin{equation}
		\label{Eq:vPoissonSplitCutoff}
		v(x',y) = \int_{\R^n} \zeta (x'-z') u(x'-z') P_\s(z',y)  \d z' + \int_{\R^n} (1-\zeta(z'))u(z') P_\s(x'-z',y) \d z'.
	\end{equation}
	Now, the first term is estimated by $\norm{u}_{L^\infty(B_{4R})}$ using simply that $\zeta$ has compact support in $B_{4R}$ and that $P_\s$ is positive and integrates $1$, while for the second one we note that $1 - \zeta$ is zero in $B_{2R}$ and thus we can use \eqref{Eq:PoissonBound} to estimate the integral by $C_R \norm{u}_{L^{1}_\s(\R^n)}$.
	
	Last, to estimate the horizontal derivatives of $v$ we proceed similarly, now differentiating \eqref{Eq:vPoissonSplitCutoff} and using, for the second integral (where $|x'-z'| \geq |z'|-|x'| \geq R$), that, for $i,j = 1, \ldots,n$, 
	\begin{equation}
		| \partial_{i}P_\s (x'- z', y)| + | \partial_{i} \partial_{j} P_\s (x'- z', y)| \leq C_R P_\s (x'- z',y) \quad \text{ if } |x'- z'|\geq R
	\end{equation}
	for some constant $C_R$ depending only on $n$, $\s$, and $R$.
	This last bound can be easily obtained from the explicit expression of the Poisson kernel $P_s$ just noticing that $|x'- z'|\geq R$ yields
	\begin{equation}
		\dfrac{|x_i - z_i|}{|x'-z'|^2 + y^2} + \dfrac{1}{|x'-z'|^2 + y^2} \leq \dfrac{1}{|x'-z'|} + \dfrac{1}{|x'-z'|^2} \leq \dfrac{1}{R} + \dfrac{1}{R^2}.
	\end{equation}
\end{proof}

In the following lemma we collect some regularity results for the $\s$-harmonic extension of a solution to a fractional semilinear equation in $B_1$.
These are the ingredients that we have referred to in our computations through the paper.
The main issue here is that $u\in L^1_\s(\R^n)$ is the only control that we have of $u$ outside $B_1$.
Instead, under more restrictive assumptions on the solution ---for instance in \cite{CabreSireI} for entire solutions in $L^\infty(\R^n)$---, most of these results are known.

\begin{lemma}
	\label{Lemma:RegularityHorizontalGrad}
	Let $n\geq 1$, $\s \in (0,1)$, and let $u\in C^2(B_1)\cap L^1_\s(\R^n)$ be a solution to $\fraclaplacian u = f(u)$ in $B_1\subset \R^n$, where $f$ is a  $C^{1,\gamma}$ function for some $\gamma >0$.	
	Let $a = 1-2\s$ and let $v$ be the $\s$-harmonic extension of $u$.
	
	Then, for every $R<1$,
	\begin{enumerate}[label=(\alph*)]
		\item $v \in H^1(\B^+_R, y^a)$.
		
		\item The functions $v$, $y^a \partial_y v$, $\nabla_{x'}v$, and $y^a \partial_y \nabla_{x'}v$ are continuous in $\overline{\B^+_R}$.
		Moreover, 
		\begin{equation}
			\label{Eq:NeumanGradxv}
			- d_\s \lim_{y \downarrow 0} y^a \partial_y \nabla_{x'}v = f'(u) \nabla_{x'}u \quad \text{ in } B_R,
		\end{equation}
		where $d_\s$ is the constant appearing in \eqref{Eq:DirichletToNeumannRelation}.
		
		\item $D^2_{x'} v$ is bounded in $\B^+_R$. 
		For $\s = 1/2$ we additionally\footnote{Note that, for $\s< 1/2$, in general $v$ is not even $C^1$ up to $\{y=0\}$. Indeed, since $y^a v_y$ is continuous up to the boundary, $v_y$ will generally blow up at $\{y=0\}$.} have that $v\in C^2(\overline{\B^+_R})$.
	\end{enumerate}
	
\end{lemma}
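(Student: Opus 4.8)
The plan is to prove the three regularity assertions in \Cref{Lemma:RegularityHorizontalGrad} in order, exploiting the splitting of $u$ into a compactly supported piece and a far-away $L^1_\s$ tail, and then using standard regularity theory for the degenerate equation $\div(y^a\nabla v)=0$ together with the Neumann condition $d_\s\partial_{\nu^a}v=f(u)$.

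\textbf{Setup.} Fix $R<R'<1$ and write $u=\zeta u+(1-\zeta)u=:u_1+u_2$ with $\zeta$ a smooth cut-off, $\zeta\equiv1$ on $B_{R'}$, $\zeta\equiv0$ outside $B_{(1+R')/2}$. Let $v_i$ be the $\s$-harmonic extension of $u_i$, so $v=v_1+v_2$. The tail part $v_2$ is harmless: since $u_2$ vanishes on $B_{R'}$, the Poisson-kernel bound \eqref{Eq:PoissonBound} and the computations in \Cref{Lemma:EstimatesHalfBallsExtension} (differentiated once more in $x'$, and also the estimate for $y^a\partial_y$ of all horizontal derivatives, which one gets by differentiating $P_\s$ in $y$ and using $|\partial_y P_\s|\le C_R P_\s/y^{?}$-type bounds on $|x'-z'|\ge R'$) show that $v_2\in C^\infty(\overline{\B_R^+})$ with $y^a\partial_y v_2$, $y^a\partial_y\nabla_{x'}v_2$ continuous up to $\{y=0\}$ and $\lim_{y\downarrow0}y^a\partial_y v_2=0$, $\lim_{y\downarrow0}y^a\partial_y\nabla_{x'}v_2=0$ there. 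Hence everything reduces to the ``interior'' part $v_1$, whose trace $u_1$ is $C^2$ with compact support and agrees with $u$ on $B_{R'}$.

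\textbf{(a) and (b).} Since $u_1\in C^2_c(\R^n)\subset H^\s(\R^n)$, the standard trace/extension theory for the weight $y^a$ (Caffarelli--Silvestre; see also \cite[Section~5]{FranKLenzmannSilvestre}) gives $v_1\in H^1(\R^{n+1}_+,y^a)$, which together with the smoothness of $v_2$ yields (a). For (b), on $B_{R'}$ the function $u=u_1$ solves $\fraclaplacian u=f(u)$ with $f\in C^{1,\gamma}$; by Schauder-type estimates for the fractional Laplacian (\cite{RosOtonSerra-Regularity}, Corollaries~2.3 and 2.5) one gets $u\in C^{2,\beta}_{\loc}(B_{R'})$ for some $\beta>0$, and then $\fraclaplacian u=f(u)\in C^{1,\beta'}_{\loc}(B_{R'})$. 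Applying the known boundary regularity for $\div(y^a\nabla w)=0$ with $C^{1,\beta'}$ Neumann data (again the extension regularity theory, as in \cite{CabreSireI}) to $v_1$ on half-balls compactly contained in $\B_{R'}^+\cup B_{R'}$, one obtains that $v_1$, $y^a\partial_y v_1$, $\nabla_{x'}v_1$, and $y^a\partial_y\nabla_{x'}v_1$ extend continuously to $\overline{\B_R^+}$; here one differentiates the equation tangentially in $x'$, using that $\nabla_{x'}v_1$ is again a solution of the same degenerate equation with Neumann data $-d_\s^{-1}\lim y^a\partial_y\nabla_{x'}v_1=\fraclaplacian(\nabla_{x'}u)=\nabla_{x'}(f(u))=f'(u)\nabla_{x'}u$ on $B_R$. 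Adding back the contribution of $v_2$ (which vanishes in the limit) gives \eqref{Eq:NeumanGradxv}.

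\textbf{(c).} Boundedness of $D^2_{x'}v$ in $\B_R^+$ follows by the same scheme applied to the second tangential derivatives: $u\in C^{2,\beta}_{\loc}(B_{R'})$ gives $D^2_{x'}u$ bounded on $B_R$, and since each $\partial_{x_i x_j}v_1$ solves $\div(y^a\nabla\cdot)=0$ with Neumann data $\partial_{x_ix_j}(f(u))\in C^{0}$, the maximum principle / boundary estimates for the degenerate equation bound it on $\overline{\B_R^+}$; $D^2_{x'}v_2$ is bounded by \Cref{Lemma:EstimatesHalfBallsExtension}-type bounds. For the special case $\s=1/2$ (so $a=0$ and the equation is just $\Delta v=0$), we already know $v_y$, $\nabla_{x'}v$, and $\nabla_{x'}v_y$ are continuous up to $\{y=0\}$ and $D^2_{x'}v$ is bounded; using $v_{yy}=-\Delta_{x'}v$ one sees $v_{yy}$ is continuous up to $\{y=0\}$ as well, and combining with $\nabla_{x'}v_y$ continuous this upgrades $v$ to $C^2(\overline{\B_R^+})$ by a standard argument (e.g. reflecting $v$ evenly across $\{y=0\}$ after subtracting the harmonic extension of $f(u)\,y$, which is $C^{1,\beta}$, and invoking interior elliptic regularity).

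\textbf{Main obstacle.} The delicate point is handling the low regularity of $u$ outside $B_1$: we only know $u\in L^1_\s(\R^n)$, not $L^\infty$ or better, so we cannot invoke the standard ``global'' extension regularity directly. The cut-off splitting and the Poisson-kernel decay estimate \eqref{Eq:PoissonBound} are exactly what decouple the far-field tail (contributing a smooth, vanishing-flux piece near $B_R$) from the interior piece, to which the classical Caffarelli--Silvestre boundary regularity and the fractional Schauder estimates apply. Making the tangential-differentiation argument rigorous — i.e. justifying that $\nabla_{x'}v_1$ and $D^2_{x'}v_1$ genuinely solve the degenerate equation with the claimed Neumann data, with all the limits $y\downarrow0$ taken in the right topology — is the part that requires care, but it is routine once one has $u\in C^{2,\beta}_{\loc}(B_{R'})$ in hand.
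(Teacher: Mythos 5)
Your overall strategy — splitting $u=\zeta u+(1-\zeta)u$ and treating the far-field piece $v_2$ separately, then applying Cabr\'e--Sire boundary regularity to the compactly supported piece $v_1$ — is a legitimate alternative to the paper's route. The paper only uses such a cut-off splitting for the $\s=1/2$ part of (c); for the $\nabla_{x'}v$ regularity in (b) it instead works with difference quotients of $v$ itself and Lemma~4.5 of \cite{CabreSireI}, precisely to sidestep the lack of control of $u$ outside $B_1$. Your splitting sidesteps the same obstacle in a different, arguably cleaner way, and it can be made to work.

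However, there is a genuine error in your treatment of $v_2$, and it propagates. You assert that $v_2\in C^\infty(\overline{\B_R^+})$ and that $\lim_{y\downarrow 0}y^a\partial_y v_2=0$ and $\lim_{y\downarrow 0}y^a\partial_y\nabla_{x'}v_2=0$ on $B_R$. Both claims are false for $\s\neq 1/2$ (and the flux claim is false even for $\s=1/2$). Indeed, for $x'\in B_R$ and $z'\notin B_{R'}$ the Poisson kernel $P_\s(x'-z',y)=p_{n,\s}\,y^{2\s}(|x'-z'|^2+y^2)^{-(n+2\s)/2}$ has $|x'-z'|$ bounded below, so the $(x',z')$-dependence is smooth, but the factor $y^{2\s}$ is not $C^1$ at $y=0$ when $\s<1/2$ and not $C^2$ when $\s>1/2$; thus $v_2$ is not smooth up to $\{y=0\}$. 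More importantly, a direct computation gives, for $x'\in B_R$,
\begin{equation}
  -\lim_{y\downarrow 0} y^a\partial_y v_2(x',y)
  \;=\;\frac{1}{d_\s}(-\Delta)^\s u_2(x')
  \;=\;-\frac{c_{n,\s}}{d_\s}\int_{\R^n\setminus B_{R'}}\frac{u_2(z')}{|x'-z'|^{n+2\s}}\,\d z',
\end{equation}
which is a smooth function of $x'\in B_R$ but is not identically zero. Consequently the Neumann data of $v_1$ on $B_{R'}$ is not $f(u)/d_\s$; it equals $\bigl(f(u)-(-\Delta)^\s u_2\bigr)/d_\s$. Your argument, as written, applies boundary regularity to $v_1$ as if its Neumann data were exactly $f(u)$, implicitly relying on the erroneous vanishing-flux claim. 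The argument can be repaired — the correction term $(-\Delta)^\s u_2$ is $C^\infty$ on $B_{R'}$, so the Neumann data of $v_1$ is still $C^{1,\gamma'}$ there and the rest goes through — but as it stands there is a gap, and the claims you made about $v_2$ should be replaced by: $v_2$, $\nabla_{x'}v_2$, $y^a\partial_y v_2$, $y^a\partial_y\nabla_{x'}v_2$ are continuous up to $\{y=0\}$ in $\overline{\B_R^+}$, with explicit (smooth, nonzero) flux given by the fractional Laplacian of the tail.
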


\begin{proof}
	Since $u\in L^1_\s(\R^n)$, the function $v$ given by \eqref{Eq:vPoisson} is well-defined and smooth in $\R^{n+1}_+$, and	satisfies $\div (y^a \nabla v) = 0$ in $\R^{n+1}_+$.
	Therefore, the main issue is to prove that the regularity results stated in the lemma hold up to $\{y=0\}$.
	
	First, to show that $v$ is continuous up to $\{y=0\}$ we simply use its expression as a convolution with $P_s$. 
	Indeed, from \eqref{Eq:vPoissonSplitCutoff} we see that the first integral in that expression is the convolution of an approximation of the identity with a compactly supported function which is continuous in $B_1$ (and thus the result is continuous in $B_1$ as $y \downarrow 0$), while the second integral can be bounded by $C_R y^{2\s} \norm{u}_{L^1_\s(\R^n)}$ thanks to \eqref{Eq:PoissonBound}.

	Next, it is well known and easy to verify ---using \eqref{Eq:vPoisson}--- that $y^a v_y$ is continuous in $\overline{\B^+_R}$, and that 
	\begin{equation}
		\label{Eq:NeumanAp}
		d_\s \dfrac{\partial v}{\partial \nu^a} := - d_\s \lim_{y\downarrow 0} y^a v_y = \fraclaplacian u = f(u) \quad \text{ in } B_R.
	\end{equation}
	
	That $v \in H^1(\B^+_R, y^a)$ follows readily using that $y^a$ and $y^{-a}$ are integrable.
	Indeed, on the one hand $v$ and its horizontal derivatives can be controlled in $L^2(\B^+_R, y^a)$ using  \Cref{Lemma:EstimatesHalfBallsExtension} (appropriately rescaled and after a covering argument). 
	On the other hand, to control the vertical derivative (in $y$) we use that $y^a v_y$ is continuous up to $\{y=0\}$, and thus $y^a v_y^2 \leq C y^{-a}$, from which the desired integrability follows.
	
	Let us now establish the regularity of $v_{x_i}$ for $i= 1,\ldots,n$. 
	Note that we cannot use the previous results applied to $v_{x_i}$ instead of $v$, since we do not have any control of $v_{x_i}$ outside~$B_1$. 
	To carry out the proof, we consider the weak equation solved by $v$, i.e., 
	\begin{equation}
		\label{Eq:vWeakSol}
		d_\s \int_{\B^+_1} y^a \nabla v \cdot \nabla \varphi \d x = \int_{B_1}  f(u) \varphi \d x'
	\end{equation}
	for every $\varphi \in C^1(\overline{\B^+_1})$ with compact support in $\B^+_1\cup B_1$.
	Note that \eqref{Eq:vWeakSol} follows readily from the equation of $v$ after integrating by parts in $\B_{1}^+\cap \{y>\delta\}$ (where $v$ is smooth) and letting $\delta \to 0$, using \eqref{Eq:NeumanAp}.
	Now, for $h>0$, we consider the difference quotient
	\begin{equation}
		D_h^i v (x) := \dfrac{v(x+ h e_i) - v(x)}{h}.
	\end{equation} 
	From \eqref{Eq:vWeakSol}, we get that for every $x_0'\in B_R$ and $\rho>0$ such that $B_{2\rho}(x_0') \subset B_1$, the function $D_h^i v $	(with $h<\rho$) solves weakly
	\begin{equation}
		\beqc{\PDEsystem}
		\div( y^a \nabla D_h^i v) & = & 0& \text{ in } \B^+_\rho(x_0'),\\
		d_\s \dfrac{\partial D_h^i v }{\partial \nu^a} & = & D_h^i [f(v)] & \text{ in } B_\rho(x_0').
		\eeqc
	\end{equation}
	
	Now, using Lemma~4.5 of \cite{CabreSireI}, it follows that
	\begin{equation}
		\label{Eq:DifQuotientsHolder}
		\norm{D_h^i v}_{C^\alpha (\overline{\B^+_{\rho/2}}(x_0'))} + \norm{y^a \partial_y D_h^i v}_{C^\alpha (\overline{\B^+_{\rho/2}}(x_0'))} \leq C_h
	\end{equation}
	for some $\alpha$ depending on $n$, $\s$, and $\gamma$, and a constant $C_h$ depending on $n$, $\s$, $\rho$, $\norm{D_h^i v}_{L^\infty (\B^+_{\rho}(x_0'))}$, and $\norm{D_h^i [f(u)]}_{C^\gamma (\overline{B}_{\rho}(x_0'))}$.
	To establish the regularity for $v_{x_i}$, we need to show that $C_h$ can be controlled uniformly as $h \to 0$.
	On the one hand, $\norm{D_h^i v}_{L^\infty (\B^+_{\rho}(x_0'))}$ is controlled by  $\norm{v_{x_i}}_{L^\infty (\B^+_{\rho}(x_0'))}$, which is finite thanks to \Cref{Lemma:EstimatesHalfBallsExtension} (applied after appropriate translations and rescalings).	
	On the other hand, $\norm{D_h^i [f(u)]}_{C^\gamma (\overline{B}_{\rho}(x_0'))}$ can be controlled by the norm $\norm{f'(u) u_{x_i}}_{C^\gamma (\overline{B}_{\rho}(x_0'))}$, which is also bounded by hypothesis.
	Thus, taking the limit $h\to 0$ in \eqref{Eq:DifQuotientsHolder}, we establish the Hölder regularity for $v_{x_i}$ and $y^a \partial_y v_{x_i}$ in $\overline{\B^+_{\rho/2}}(x_0')$.
	Since $x_0'$ can be taken arbitrarily in $B_R$, we get the desired result.
	Note that once we have proved that $y^a \partial_y v_{x_i}$ is continuous up to $\{y=0\}$, the identity \eqref{Eq:NeumanGradxv} follows readily.
	
	Finally,  \Cref{Lemma:EstimatesHalfBallsExtension} established the boundedness of $D^2_{x'}v$.
	To conclude the proof, let us show that $v\in C^2(\overline{\B^+_R})$ when  $\s=1/2$.
	For this, take $\zeta:\R^n \to \R$ a cut-off function with compact support in $B_{R+ 2\rho}$ for some $\rho<(1- R)/2$, and such that $\zeta \equiv 1$ in $B_{R+ \rho}$.
	Define $w(x,y):= (P_{1/2}(\cdot,y) * (u \zeta) ) (x)$, which is harmonic in $\R^{n+1}_+$ and satisfies $w(\cdot,0) = u\zeta$ in $\R^n$, and set $\psi :=  v- w$.
	Now, on the one hand, since $\psi$ is harmonic in $\B^+_{R+\rho}$ and $\psi =0$ on $B_{R+\rho}$, by standard estimates for harmonic functions (taking for instance the odd extension across $\{y = 0\}$), we have that $\psi \in C^2(\overline{\B^+_R})$.
	On the other hand, since $u\zeta\in C^2(\R^n)$ and has compact support, by the previous results (applied to $u\zeta$ and its derivatives) it follows readily that $w$, $w_y$,  $w_{x_i}$,  $w_{x_i y}$, and $w_{x_i x_j}$ are continuous in $\overline{\B^+_R}$, for every $i,j= 1, \ldots,n$.
	Using that $w$ is harmonic, we deduce that $w_{yy}$ is also continuous in $\overline{\B^+_R}$, concluding the proof.
\end{proof}

\section{Two simple interpolation inequalities in cubes}
\label{Sec:Interpolation}

The following is an interpolation inequality from \cite{CabreQuantitative}, where it was conceived in order to give quantitative proofs of the Hölder regularity result of \cite{CabreFigalliRosSerra-Dim9} for stable solutions to the local equation $-\Delta u = f(u)$.
For completeness, we present its proof next.
Note that, in contrast with the interpolation inequality of \cite[Theorem~7.28]{GilbargTrudinger}, which requires an extension theorem in $\R^n$, establishing \eqref{5.2} in a cube of~$\R^n$ is immediate once it is proved in dimension one.

\begin{proposition}[\cite{CabreQuantitative}]
	\label{Prop:InterpolationGradient}
	Let $Q=(0,1)^n\subset \R^n$, $p\geq 1$, and $w\in C^{2}(\overline Q)$.
	
	Then, for every $\varepsilon\in (0,1)$,
	\begin{equation}\label{5.2}
		\int_{Q}\abs{\nabla w}^{p}\d x \leq n^{p/2 + 1}\, p\,\varepsilon \int_{Q}\abs{\nabla w}^{p-1}\lvert D^2w\rvert \d x+  n^{p/2 + 1}  \left( \frac{18}{\ep} \right)^{p}\int_{Q}\abs{w}^{p}\d x.
	\end{equation}
\end{proposition}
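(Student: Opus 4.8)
The plan is to reduce \eqref{5.2} to the one–dimensional case and then prove it there by an elementary covering argument; the dimensional factor $n^{p/2+1}$ and the constant $18$ come out cleanly from this reduction together with a careful accounting in the one–dimensional estimate.

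For the reduction, write $|\nabla w|^2=\sum_{i=1}^n w_{x_i}^2$ and use $\big(\sum_i w_{x_i}^2\big)^{p/2}\le n^{p/2}\max_i|w_{x_i}|^p\le n^{p/2}\sum_i|w_{x_i}|^p$, so that $\int_Q|\nabla w|^p\le n^{p/2}\sum_{i=1}^n\int_Q|w_{x_i}|^p$. For each fixed $i$ and each fixed value of the remaining $n-1$ variables, apply the one–dimensional inequality stated below to the slice $g(t):=w(\dots,t,\dots)$ (with $t$ in the $i$-th slot) and integrate over the remaining variables in $(0,1)^{n-1}$, using Fubini. On the resulting right–hand side one bounds $|w_{x_i}|^{p-1}|w_{x_ix_i}|\le|\nabla w|^{p-1}|D^2w|$, so summing over $i=1,\dots,n$ and multiplying by $n^{p/2}$ produces precisely the factor $n^{p/2+1}$, the factor $p\,\varepsilon$ in front of the Hessian term, and the constant $(18/\varepsilon)^p$ in \eqref{5.2}.

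The core one–dimensional statement is: for $g\in C^2([0,1])$ and $\varepsilon\in(0,1)$,
\begin{equation*}
\int_0^1|g'|^p\d t\le p\,\varepsilon\int_0^1|g'|^{p-1}|g''|\d t+\left(\frac{18}{\varepsilon}\right)^p\int_0^1|g|^p\d t.
\end{equation*}
To prove it I would set $N:=\lceil 1/\varepsilon\rceil$ and $\ell:=1/N$, so that $\ell\le\varepsilon$ and, since $\varepsilon<1$, $N\le 2/\varepsilon$, and partition $[0,1]$ into the $N$ equal intervals $I$ of length $\ell$. On each such $I$, the mean value theorem for integrals applied to the continuous function $|g|$ on the first and the last quarter of $I$ produces points $a,b\in I$ with $b-a\ge\ell/2$ and with $|g(a)|,|g(b)|$ equal to averages of $|g|$ over those quarters; the mean value theorem for $g$ then gives a point $\xi_I\in I$ with $|g'(\xi_I)|\le C\,\ell^{-2}\int_I|g|$ for an explicit constant. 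Next, since $t\mapsto|g'(t)|^p$ is Lipschitz on $[0,1]$ with $\big|(|g'|^p)'\big|\le p\,|g'|^{p-1}|g''|$ almost everywhere, the fundamental theorem of calculus gives, for $x$ and $\xi_I$ in the same interval $I$,
\begin{equation*}
|g'(x)|^p\le|g'(\xi_I)|^p+p\int_I|g'|^{p-1}|g''|\d t.
\end{equation*}
Integrating this in $x\in I$, summing over the $N$ intervals, using $\ell\le\varepsilon$ on the Hessian term and Hölder's inequality ($\int_I|g|\le\ell^{1-1/p}(\int_I|g|^p)^{1/p}$) together with the bound on $|g'(\xi_I)|$ and $N\le 2/\varepsilon$ on the remaining term, yields the displayed inequality.

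The only genuinely delicate part is the bookkeeping of the numerical constant so that it comes out exactly as $18$: this dictates the precise choice $N=\lceil1/\varepsilon\rceil$ and the split of each subinterval into two sampling regions separated by a gap when locating $\xi_I$, and the estimation $2\cdot 8^p\le 18^p$ for $p\ge1$ that absorbs the leftover multiplicative constants. A minor technical point is that $|g'|^p$ need not be $C^1$ where $g'$ vanishes; for $p\ge1$ and $g\in C^2$ it is nevertheless Lipschitz and the chain–rule bound above holds a.e., which is all the fundamental theorem of calculus requires. Everything else — the dimensional reduction and the two mean value arguments — is routine.
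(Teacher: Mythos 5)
Your proof is essentially the same as the paper's: both reduce to one dimension via Fubini and the inequality $|\nabla w|^p\le n^{p/2}\sum_i|w_{x_i}|^p$, and both prove the one-dimensional estimate by (i) locating a point $\xi$ in each subinterval where $|g'(\xi)|$ is controlled by the $L^1$ norm of $g$ (via two sampled values of $g$ and a mean-value quotient), (ii) using the fundamental theorem of calculus for the Lipschitz function $|g'|^p$, whose derivative is bounded a.e.\ by $p|g'|^{p-1}|g''|$, to propagate the bound to the whole subinterval, (iii) summing over a partition of $[0,1]$ into $O(1/\varepsilon)$ intervals, and (iv) using H\"older to pass from $(\int|g|)^p$ to $\int|g|^p$. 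The only cosmetic differences are that you sample from the first and last quarters of each subinterval (getting the factor $8$) whereas the paper samples from the first and last thirds (getting $9$), and that you tile first and then estimate while the paper proves a clean inequality on a generic interval $[0,\delta]$ and then tiles. Your numerical bookkeeping in the final paragraph is a bit loosely stated (the bound really comes out as $(2\cdot 8)^p=16^p$, not ``$2\cdot 8^p$''), but since $16^p\le 18^p$ the stated inequality holds, and the paper picks $\xi$ as the global minimum of $|g'|$ on the subinterval, which is marginally cleaner than your two-point mean-value argument but gives the same bound.
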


\begin{proof}
	We follow \cite{CabreQuantitative}.
	We start proving \eqref{5.2} in dimension $n=1$. 
	First, we claim that given $\delta\in (0,1)$ and $w\in C^2([0,\delta])$, it holds
	\begin{equation}\label{5.2claim2}
		\int_{0}^{\delta}\abs{w'}^{p}\d x \leq p\,\delta\int_{0}^\delta\abs{w'}^{p-1}\abs{w''}\d x+9^{p}\delta^{-p}\int_{0}^{\delta}\abs{w}^{p}\d x.
	\end{equation}
	To show this, take $x_0\in[0,\delta]$ such that $\abs{w'(x_0)}= \min_{[0,\delta]}\abs{w'}$.
	Then, for $0<y<\frac\delta3<\frac{2\delta}3<z<\delta$, since $(w(z)-w(y))/(z-y)$ is equal to $w'$ at some point, it follows that $\abs{w'(x_0)}\leq 3\delta^{-1}(|w(y)|+|w(z)|)$. 
	Integrating this inequality first in $y$ and later in $z$, we get that
	$\abs{w'(x_0)}\leq 9\delta^{-2}\int_{0}^{\delta}\abs{w}\d x$, and taking powers we obtain 
	\begin{equation}\label{5.2_1}
		\abs{w'(x_0)}^{p}\leq 9^{p} \delta^{-p - 1}\int_{0}^{\delta}\abs{w}^{p}\d x \quad \text{ for } p\in [1,\infty).
	\end{equation}
	Now, for $x\in(0,\delta)$, we integrate $\left(\abs{w'}^{p}\right)'$ in the interval with end points $x$ and $x_0$, to get
	\[
	\abs{w'(x)}^{p} \leq p\int_{0}^\delta\abs{w'}^{p-1}\abs{w''}\d x+\abs{w'(x_0)}^{p}.
	\] 
	Combining this inequality with \eqref{5.2_1} and integrating in $x\in(0,\delta)$, we conclude \eqref{5.2claim2}.

	Once the previous claim (on integrals in $[0,\delta]$) is proved, we can establish \eqref{5.2} in dimension one.
	Let $w\in C^2([0,1])$. 
	Now, for any given integer $k > 1$ we divide $(0,1)$ into $k$ disjoint intervals of length $\delta=1/k \in (0,1)$. 
	Since \eqref{5.2claim2} did not require any specific boundary values of $w$, we can use the inequality in each of these intervals of length $1/k$ and then add them all up to obtain
	\begin{equation}\label{5.2claim3}
		\int_{0}^1\abs{w'}^{p}\d x \leq  \frac{p}{k}\int_{0}^1\abs{w'}^{p-1}\abs{w''}\d x+(9k)^{p}\int_{0}^1\abs{w}^{p}\d x.
	\end{equation}
	Now, since $0<\ep<1$, in \eqref{5.2claim3} we can choose $k\in \Z$ such that $1 < \frac{1}{\ep}\leq k<\frac{2}{\ep}$, establishing the result in dimension one.
	
	Finally, we show the result for $w\in C^{2}([0,1]^n)$, with $n\geq 2$.
	We will denote $x=(x_1,x')\in\R\times\R^{n-1}$. 
	Using the result in dimension $1$ for every $x'$, we get
	\begin{equation}
		\begin{split}
			\int_{Q}\abs{w_{x_1}}^{p}\d x 
			&=\int_{(0,1)^{n-1}}\d x'\int_0^1\d x_1\abs{w_{x_1}(x)}^{p}
			\\ 
			&\hspace{-1.8cm}\leq p\,\varepsilon\int_{(0,1)^{n-1}}\d x'\int_0^1\d x_1\abs{w_{x_1}(x)}^{p-1}\abs{w_{x_1x_1}(x)} \\
			&+(18\varepsilon^{-1})^{p}\int_{(0,1)^{n-1}}\d x'\int_0^1\d x_1\abs{w(x)}^{p}
			\\
			&\hspace{-1.8cm}= p\,\varepsilon\int_{Q}\abs{w_{x_1}(x)}^{p-1}\abs{w_{x_1x_1}(x)}\d x+ (18\varepsilon^{-1})^{p}\int_{Q}\abs{w(x)}^{p}\d x.
		\end{split}
	\end{equation}
	Since the same inequality holds for the partial derivatives with respect to each variable $x_i$ instead of $x_1$, adding up all the inequalities and using that $|\nabla v|^p \leq n^{p/2} (|v_{x_1}|^p + \ldots + |v_{x_n}|^p)$, we obtain \eqref{5.2}. 
\end{proof}

The following second interpolation inequality is well known and follows immediately from Poincaré's inequality.
It allows us to replace, when $p=2$, the $L^2$ norm of $w$ in \eqref{5.2} by the square of its $L^1$ norm (at the price of adding a reabsorbable small factor of the $H^1$ norm). 

\begin{proposition}
	\label{Prop:InterpolationNash}
	Let $Q=(0,1)^n\subset \R^n$, $p\ge 1$,  and $w\in C^{2}(\overline Q)$.
	
	Then, for every $\tilde\ep\in (0,1)$,
	\begin{equation}\label{5.2bis}
		\int_{Q}|w|^{p}\d x \leq C\, \tilde\ep^{\;p} \int_{Q}\abs{\nabla w}^{p}\d x+  C  \, {\tilde\ep}^{\; -n(p-1)}\left( \int_{Q}\abs{w}\d x\right)^p
	\end{equation}
	for some constant $C$ depending only on $n$ and $p$.
\end{proposition}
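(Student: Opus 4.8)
\textbf{Proof plan for Proposition~\ref{Prop:InterpolationNash}.} The plan is to reduce \eqref{5.2bis} to the classical Gagliardo--Nirenberg--Sobolev/Poincar\'e type inequality on the unit cube, via a scaling argument. First I would recall the elementary inequality on $Q=(0,1)^n$: there is a dimensional constant $c_n$ such that, for every $g\in C^1(\overline Q)$,
\begin{equation}
	\label{Eq:PoincarePlanNash}
	\int_Q |g - g_Q| \d x \leq c_n \int_Q |\nabla g| \d x, \qquad g_Q := \int_Q g \d x,
\end{equation}
which is just Poincar\'e's inequality in $L^1$ on the (convex) unit cube. Applying this to $g = |w|^{p}$ and using $\bigl|\nabla |w|^{p}\bigr| \le p\,|w|^{p-1}|\nabla w|$ together with Young's inequality $|w|^{p-1}|\nabla w| \le \frac{p-1}{p}\lambda^{p/(p-1)}|w|^{p} + \frac1p \lambda^{-p}|\nabla w|^{p}$ for a free parameter $\lambda>0$, one gets
\begin{equation}
	\label{Eq:IntermediatePlanNash}
	\int_Q |w|^{p} \d x \le \Bigl(\int_Q |w| \d x\Bigr)^{\!?}\cdots
\end{equation}
— and here is the one genuinely delicate point, so let me be careful rather than grind: one cannot directly bound $\bigl(|w|^{p}\bigr)_Q$ by a power of $\int_Q|w|$ without an extra interpolation, because $|w|^{p}$ averaged is not controlled by $(\int|w|)^p$ in general.

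To handle this cleanly, I would instead argue by a standard contradiction-compactness or direct scaling argument. The cleanest route: prove the \emph{unscaled} inequality
\begin{equation}
	\label{Eq:UnscaledPlanNash}
	\|w\|_{L^p(Q)} \le C_0\bigl(\|\nabla w\|_{L^p(Q)} + \|w\|_{L^1(Q)}\bigr)
\end{equation}
with $C_0 = C_0(n,p)$ — this is immediate from the compact embedding $W^{1,p}(Q)\hookrightarrow L^p(Q)$ combined with the fact that the only way the right-hand side can vanish is $w\equiv 0$ (a Lions-type lemma), or more elementarily from the Gagliardo--Nirenberg inequality on the cube followed by Young. Then the $\tilde\ep$-version \eqref{5.2bis} follows by applying \eqref{Eq:UnscaledPlanNash} to the rescaled function $w_{\tilde\ep}(x) := w(\tilde\ep x)$ on the cube $\tilde\ep^{-1}Q$ — or rather, since $Q$ is fixed, by a covering of $Q$ by $\sim \tilde\ep^{-n}$ disjoint subcubes of side $\tilde\ep$, applying \eqref{Eq:UnscaledPlanNash} rescaled to each subcube $Q_j$, and summing. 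On a subcube of side $\tilde\ep$ the scaling of \eqref{Eq:UnscaledPlanNash} reads
\[
\int_{Q_j}|w|^p \le C\Bigl(\tilde\ep^{\,p}\int_{Q_j}|\nabla w|^p + \tilde\ep^{-n(p-1)}\bigl(\int_{Q_j}|w|\bigr)^p\Bigr),
\]
where the exponent $-n(p-1)$ comes from tracking the powers of $\tilde\ep$ in $\|w\|_{L^1(Q_j)}^p$ versus $\|w\|_{L^p(Q_j)}^p$ under the dilation $x\mapsto \tilde\ep x$ (the factor $\tilde\ep^{-n}$ for the $L^1$ norm raised to the $p$, against $\tilde\ep^{-n}$ for a single $L^p$ norm). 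Summing over $j$ and using $\sum_j (\int_{Q_j}|w|)^p \le (\sum_j \int_{Q_j}|w|)^p = (\int_Q|w|)^p$ for $p\ge 1$ yields \eqref{5.2bis}.

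The main obstacle — really the only one — is getting the bookkeeping of the $\tilde\ep$-powers exactly right in the covering/scaling step, in particular verifying that the subadditivity $\sum_j a_j^p \le (\sum_j a_j)^p$ is applied in the correct direction (it is, since $p\ge 1$ and $a_j\ge 0$) and that the number of subcubes, $\sim\tilde\ep^{-n}$, does not produce an extra unwanted factor when combined with the $\tilde\ep^{-n(p-1)}$ per-cube constant. I expect no conceptual difficulty: the inequality is classical and the proof is short once \eqref{Eq:UnscaledPlanNash} is in hand, and \eqref{Eq:UnscaledPlanNash} itself is the Rellich--Kondrachov embedding on a cube (or, avoiding compactness, a one-line consequence of the Gagliardo--Nirenberg inequality $\|w\|_{L^p}\lesssim \|w\|_{W^{1,p}}^{\theta}\|w\|_{L^1}^{1-\theta}$ on the cube followed by Young's inequality with exponents $1/\theta$ and $1/(1-\theta)$).
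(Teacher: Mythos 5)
Your proposal is correct and, once the abandoned first attempt (Poincar\'e in $L^1$ on $|w|^p$) is set aside, coincides with the paper's proof: both rest on Poincar\'e's inequality in $L^p$ on a cube of side $\delta$ (your unscaled inequality on $Q$ rescaled is exactly that, and can be obtained directly from $\|w-w_{Q}\|_{L^p}\le C\|\nabla w\|_{L^p}$ and $|w_Q|\le\|w\|_{L^1(Q)}$ rather than invoking compactness or Gagliardo--Nirenberg), followed by partitioning $Q$ into $k^n\sim\tilde\ep^{-n}$ subcubes of side $\delta=1/k\sim\tilde\ep$, summing, and using $\sum_j a_j^p\le(\sum_j a_j)^p$. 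Your $\tilde\ep$-power bookkeeping ($\tilde\ep^{\,p}$ on the gradient term and $\tilde\ep^{\,-n(p-1)}$ on the $L^1$ term) matches the paper exactly; the only difference is cosmetic, namely that the paper applies Poincar\'e directly on each $Q_\delta$ instead of first proving the unit-cube estimate and then rescaling.
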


\begin{proof}
	From Poincar\'e's inequality for functions $w$ in a cube $Q_\delta$ of side-length $\delta$, which reads $\|w-w_{Q_\delta}\|_{L^p (Q_\delta)} \leq C\delta\|\nabla w\|_{L^p (Q_\delta)}$ (where $w_Q$ is the average of $w$ in $Q$), we obtain 
	$$
	\|w\|_{L^p (Q_\delta)} \leq C\delta \|\nabla w\|_{L^p (Q_\delta)}+ |Q_\delta|^{1/p}|w_{Q_\delta}|= C\delta \|\nabla w\|_{L^p (Q_\delta)}+ \delta^{-n(p-1)/p}\int_{Q_\delta}\abs{w}\d x.
	$$
	Now, for any given integer $k>1$ we divide $Q$ into $k^n$ disjoint cubes $Q_j$ of side-length $\delta=1/k$. 
	Using the previous inequality in each cube $Q_j$ we have
	$$
	\int_{Q_j}|w|^{p} \d x \leq \dfrac{C}{k^{p}} \int_{Q_j}\abs{\nabla w}^{p}\d x+  Ck^{n(p-1)}\left( \int_{Q_j}\abs{w}\d x\right)^p.
	$$
	Adding up all these inequalities and noticing that $\sum_j (\int_{Q_j}\abs{w}\d x)^p \le ( \sum_j \int_{Q_j}\abs{w}\d x)^p= ( \int_{Q}\abs{w}\d x)^p$, the desired estimate \eqref{5.2bis} follows after choosing an integer $k$ such that $1 <\tilde\ep^{\,-1}\le k\le 2 \tilde\ep^{\,-1}$.
\end{proof}

\section*{Acknowledgements}
We thank Louis Dupaigne for suggesting to add the Liouville result of \Cref{Coro:Liouville}.


\end{document}